\documentclass[a4paper,10pt]{article}
\usepackage{mathtext}
\usepackage[T1,T2A]{fontenc}
\usepackage[cp1251]{inputenc}
\usepackage[english]{babel}
\usepackage{amsmath}
\usepackage{amsfonts}
\usepackage{amssymb}
\usepackage{mathrsfs}
\usepackage{amsthm}
\usepackage{enumerate}
\usepackage{graphicx}
\graphicspath{}
\DeclareGraphicsExtensions{.pdf,.png,.jpg}

\usepackage{color}
\usepackage{euscript}

\usepackage{cite}

\newtheorem{Le}{Lemma}[section]
\newtheorem{Def}{Definition}[section]
\newtheorem{St}[Le]{Proposition}
\newtheorem{Th}{Theorem}[section]
\newtheorem{Cor}[Le]{Corollary}
\newtheorem{Rem}[Le]{Remark}

\newtheorem{Que}{Question}

\numberwithin{equation}{section}

\newcommand{\eps}{\varepsilon}

\DeclareMathOperator{\supp}{supp}

\DeclareMathOperator{\diam}{diam}
\DeclareMathOperator{\dist}{dist}

\title{Uncertainty Principle for distributions with Fourier transform in $L_{p,q}(\mathbb{R}^d)$.}
\author{Nikita Dobronravov\footnote{Supported by the Russian Science Foundation grant no 24-71-10011}}
\begin{document}
	\maketitle
	\begin{abstract}
		A version of the Uncertainty Principle says: There does not exist a non zero function in $L_p(\mathbb{R}^d)$ if its Fourier transform is supported by a set of finite $\alpha$-Hausdorff measure with $\alpha<2d/p$. This UP does not hold at the endpoint $\alpha=2d/p$. We find the sharp form of the UP in the limit case. 
		We prove that there exists a non-zero function in the Lorentz space $L_{p,q}(\mathbb{R}^d)$ such that  its Fourier transform is supported by a set of zero $(\frac{2d}{p},\beta)$-Netrusov--Hausdorff capacity if and only if $\beta>\frac{q}{2(q-1)}$.
	\end{abstract}
	
	\section{Introducion}
	The Uncertainty Principle (UP) in mathematical analysis is a family of facts that state: Both function and its Fourier transform cannot be simultaneously small (see~\cite{HavJor}). Denote the Hausdorff measure of dimension $\alpha$ by $\mathcal{H}_\alpha$. The following theorem is yet another manifestation of the Uncertainty Principle.
	\begin{Th}\label{nonend} 
		Let $S\subset \mathbb{R}^d$ be a compact set such that $\mathcal{H}_{\alpha}(S)<\infty$. Let $\zeta$ be a tempered distribution such that $\supp\zeta\subset S$ and $\hat{\zeta}\in L_p(\mathbb{R}^d)$ for some $p<\frac{2d}{\alpha}$. Then $\zeta=0$.
	\end{Th}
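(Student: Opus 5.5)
We argue by duality, testing $\zeta$ against smooth functions $\varphi$ that are adapted to a fine covering of $S$. Fix $\varphi\in C_0^\infty(\mathbb{R}^d)$. It suffices to show $\langle \zeta,\varphi\rangle=0$. Since $\mathcal{H}_\alpha(S)<\infty$, for every $\delta>0$ we can cover $S$ by balls $B(x_j,r_j)$ with $r_j<\delta$ and $\sum_j r_j^\alpha\le C$, where $C$ is independent of $\delta$. By compactness the cover may be taken finite.

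Fix a bump $\psi\in C_0^\infty(B(0,2))$ with $\psi=1$ on $B(0,1)$, and set $\psi_j(x)=\psi((x-x_j)/r_j)$ and $\Psi_\delta=\min(1,\sum_j\psi_j)$. The minimum is not smooth, so instead we take a smooth function of $\sum_j\psi_j$ that equals $1$ where the sum is $\ge1$; alternatively we work directly with a partition-type function. The function $\Psi_\delta$ equals $1$ on a neighbourhood of $S$, so $\langle\zeta,\varphi\rangle=\langle\zeta,\varphi\Psi_\delta\rangle=\langle\hat\zeta,\check{(\varphi\Psi_\delta)}\rangle$, up to normalization. By H\"older's inequality,
\[
|\langle\zeta,\varphi\rangle|\le \|\hat\zeta\|_{L_p}\,\|\widehat{\varphi\Psi_\delta}\|_{L_{p'}}.
\]
We may assume $p\ge 2$. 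If $p<2$, then $\hat\zeta\in L_p$ forces $\zeta$ to be a function in $L_{p'}$ supported on a Lebesgue-null set, since $\alpha<2d/p$ requires $\alpha<d$ when $p\ge2$ and makes $S$ null when $\alpha<d$. The case $p<2$ with $\alpha\ge d$ needs separate care; there one can first interpolate/reduce, or observe $\hat\zeta\in L_p\cap$ (polynomial growth), which puts it in $L_{\tilde p}$ for larger $\tilde p$ after smoothing by multiplying $\zeta$ by a test function. So take $p'\le2$.

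The main estimate is
\[
\|\widehat{\varphi\Psi_\delta}\|_{L_{p'}}\to0 \quad\text{along a subsequence as } \delta\to0.
\]
For a single piece, $\|\widehat{\varphi\psi_j}\|_{L_{p'}}\lesssim r_j^{d-d/p'}=r_j^{d/p}$ by scaling. The triangle inequality alone gives $\sum_j r_j^{d/p}$, which is useless when $d/p<\alpha$. So we use the structure instead. With $p'\le2$, interpolate between $L_2$ (Plancherel) and $L_1$, or use $\|\hat f\|_{p'}\le\|\hat f\|_2^{\theta}\|\hat f\|_1^{1-\theta}$. Both norms are controlled: $\|\varphi\Psi_\delta\|_{L_2}^2\lesssim\sum_j r_j^d$, and $\|\widehat{\varphi\Psi_\delta}\|_{L_1}\lesssim\sum_j\|\hat\psi_j\|_1\lesssim N$, the number of balls. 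The cleaner route is Hausdorff–Young, $\|\hat g\|_{p'}\le\|g\|_p$. This only works directly when $p'\ge2$, so for $p\ge2$ we use duality the other way.

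I expect the sharp exponent $2d/p$ to arise from an $L_2$-type orthogonality argument. Write $\widehat{\varphi\Psi_\delta}=\sum_j\widehat{\varphi\psi_j}$. Each $\widehat{\psi_j}$ is concentrated at frequency scale $1/r_j$ with height $r_j^d$. Grouping balls by dyadic radius $r_j\sim 2^{-k}$ with count $N_k\lesssim 2^{k\alpha}$, the sum of the $N_k$ pieces has $L_2$ norm $\lesssim (N_k2^{-kd})^{1/2}$ (they are almost disjointly supported in space) and $L_\infty$ norm $\lesssim N_k2^{-kd}$. Interpolating, the $L_{p'}$ norm is
\[
\lesssim (N_k2^{-kd})^{1-1/p'}\cdot(\text{scale factor}).
\]
A more careful computation: supported on frequencies $|\xi|\lesssim2^k$ with $L_2$ norm $A=(N_k2^{-kd})^{1/2}$, Hölder gives $L_{p'}$ norm $\le A\,2^{kd(1/p'-1/2)}=N_k^{1/2}2^{-kd/2+kd/2-kd/p}=N_k^{1/2}2^{-kd/p}\lesssim 2^{k(\alpha/2-d/p)}$. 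This tends to $0$ precisely when $\alpha<2d/p$, and summing over $k\ge k_0(\delta)$ gives a geometric tail that tends to $0$.

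The main obstacle is making this rigorous. Two issues arise. First, overlaps and the nonlinearity of $\Psi_\delta$: we handle these by choosing the cover with bounded overlap (Besicovitch) and using $\Psi_\delta=\sum_j\psi_j\cdot\chi$ with a smooth normalization. Second, the frequency tails of $\hat\psi_j$ beyond $2^k$: these are handled by Schwartz decay, splitting dyadically in frequency and repeating the $L_2$–Hölder bound on each shell, with the rapid decay absorbing the growing volume factor. The $p<2$ case should follow by the same $L_2$ computation, or by the reduction mentioned above.
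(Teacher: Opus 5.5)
\paragraph{Main gap: the test function is not equal to $1$ near $S$.}
Your scale-by-scale computation is sound, but only for the \emph{linear} sum $F=\sum_j\psi_j$. It rests on writing the test function as $\sum_k G_k$, with $G_k$ built only from scale-$2^{-k}$ bumps, so that $\|G_k\|_{L_2}\lesssim(N_k2^{-kd})^{1/2}$ and $\widehat{G_k}$ lives essentially on $|\xi|\lesssim2^k$.

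But $F$ is only $\geqslant1$ near $S$. Since $\zeta$ is an arbitrary distribution and not a positive measure, $\langle\zeta,\varphi F\rangle\neq\langle\zeta,\varphi\rangle$ in general. You need a test function \emph{equal} to $1$ on a neighbourhood of $S$. Any such function built from the $\psi_j$ (a smooth truncation $H(F)$, a normalized partition of unity) is nonlinear in the family. Where balls of very different sizes meet, its derivatives are governed by the smallest ball, and it is not a sum of single-scale pieces, so your estimate does not apply to it. Besicovitch overlap control and a ``smooth normalization'' do not touch this, and the proposal contains no bound for $\widehat{\varphi H(F)}$.

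This is precisely the step separating distributions from measures. For $\mu\geqslant0$ your linear bound already suffices: $\mu(S)\leqslant\langle\mu,F\rangle\leqslant\|\hat\mu\|_{L_p}\|\check F\|_{L_{p'}}\to0$. The paper's machinery recovers the theorem for $p>2$ from Theorem~\ref{nonendmy} and item~7 of Proposition~\ref{HLp}, and it handles exactly this point by duality. Proposition~\ref{dual}, Netrusov's $cap\asymp Cap$ (Lemma~\ref{Netrus}) and Corollary~\ref{eqvi} reduce distributions to positive measures. For those, only linear sums of bumps (Lemma~\ref{dd}, Corollary~\ref{corbes}) and the Frostman-type Lemma~\ref{frost} are needed.

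\paragraph{How to close it within your framework.}
Estimate $g=H(F)$ one frequency shell at a time, with $H$ smooth, $H(0)=0$, $H=1$ on $[1,\infty)$. For $|\xi|\sim2^m$ write $g=H(F_m)+(g-H(F_m))$ with $F_m=\sum_{r_j>2^{-m}}\psi_j$.
\begin{itemize}
\item The second term is bounded and supported in $\bigcup_{r_j\leqslant2^{-m}}2B_j$, so its $L_2$ norm is $\lesssim(\sum_{r_j\leqslant2^{-m}}r_j^d)^{1/2}$.
\item For the first term, the chain rule and bounded overlap within each dyadic scale give $|\partial^\beta H(F_m)(x)|\lesssim\rho_m(x)^{-|\beta|}$, where $\rho_m(x)=\min\{r_j:\ x\in2B_j,\ r_j>2^{-m}\}$. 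Hence its part on the shell has $L_2$ norm $\lesssim2^{-mM}(\sum_{r_j>2^{-m}}r_j^{d-2M})^{1/2}$.
\end{itemize}
With $N_k\lesssim2^{k\alpha}$, $\alpha<d$ and $2M>d-\alpha$, both terms are $\lesssim2^{-m(d-\alpha)/2}$ for $m\geqslant k_0$. For $m<k_0$ one has $F_m=0$ and $\|g\|_{L_2}\lesssim2^{-k_0(d-\alpha)/2}$. H\"older on shells then yields $\|\hat g\|_{L_{p'}}\lesssim2^{k_0(\alpha/2-d/p)}\to0$.

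With this supplied, your argument is a valid and more elementary route than the paper's: explicit test functions, no minimax or Frostman lemma. Its $\ell_1$ summation over scales, however, uses the slack $\alpha<2d/p$ and cannot reach the endpoint results.

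\paragraph{Secondary error: the case $p<2$, $\alpha\geqslant d$.}
Your remark that this case can be handled is wrong, because the statement is false there. Take $\alpha=d$, $S$ a closed ball, $\zeta\in C_0^\infty$ and $1\leqslant p<2$; for $\alpha>d$, every bounded set has $\mathcal{H}_\alpha=0$. So the theorem tacitly assumes $\alpha<d$. In that case the clean reduction to $2<p<2d/\alpha$ is as follows: $\hat\zeta=\hat\chi*\hat\zeta$ with $\chi\in C_0^\infty$, $\chi=1$ near $S$, is bounded, hence lies in every $L_{\tilde p}$ with $\tilde p\geqslant p$.
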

	
		In the article~\cite{Mynotyet} it was proved that this UP does not hold at the endpoint.
		\begin{Th}\label{res}
			Let $2<p<\infty$. There exists a compact set $S\subset \mathbb{R}^d$ and a probability measure $\mu$ such that $\supp\mu\subset S$, $\hat{\mu}\in L_{p}$,  $\mathcal{H}_{\alpha}(S)=0$ and $p=\frac{2d}{\alpha}$.
		\end{Th}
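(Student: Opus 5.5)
We need a construction: for $2<p<\infty$, $\alpha=2d/p\in(0,d)$, a probability measure $\mu$ on a compact $S$ with $\mathcal{H}_\alpha(S)=0$ and $\hat\mu\in L_p$. The plan is a Cantor-type random construction whose scales shrink slightly faster than the $\alpha$-dimensional rate, so that the Hausdorff measure vanishes. The Fourier decay is still kept just enough, via a logarithmic gauge, for $\hat\mu\in L_p$.

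\emph{Construction.} Pick scales $r_k\downarrow 0$ rapidly, with $N_k=(r_{k-1}/r_k)^d$ cubes of side $r_k$ inside each cube of generation $k-1$. Inside each parent cube keep $M_k$ subcubes, chosen at random (independently uniformly, or a random shift of a lattice). The choice is made so that the total count $P_k=M_1\cdots M_k$ satisfies
\[
P_k\, r_k^{\alpha}=\varepsilon_k\to 0
\]
slowly, for instance $\varepsilon_k\approx k^{-\gamma}$ with a small $\gamma>0$. Let $\mu_k$ be the normalized Lebesgue measure on the union of the generation-$k$ cubes, and let $\mu$ be the weak limit. Since $S=\bigcap_k S_k$ is covered by $P_k$ cubes of diameter $\sqrt d\,r_k$, we get $\mathcal{H}_\alpha(S)\lesssim\liminf_k \varepsilon_k=0$ immediately. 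The density of $\mu_k$ is $\approx (P_k r_k^d)^{-1}$.

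\emph{Fourier estimate.} Write $\hat\mu=\hat\mu_0+\sum_k(\hat\mu_k-\hat\mu_{k-1})$ and bound each increment in $L_p$. For $|\xi|\lesssim r_{k-1}^{-1}$ the increment is tiny, since $\mu_k$ and $\mu_{k-1}$ agree on coarse scales up to randomness errors. For $|\xi|\gg r_k^{-1}$ both terms are controlled by the decay of the indicator of a cube, and the tail is summable. The main band is $r_{k-1}^{-1}\lesssim|\xi|\lesssim r_k^{-1}$. There $\hat\mu_k(\xi)$ is essentially a normalized exponential sum over $P_k$ random points times the bump factor $\hat{\mathbf 1}_{Q_{r_k}}$. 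By square-function / Khintchine-type estimates in expectation (Kahane's approach for random Cantor sets), its typical size is $|\hat\mu_k(\xi)|^2\approx P_k^{-1}$ on a region of volume $\approx r_k^{-d}$. Hence
\[
\mathbb{E}\|\hat\mu_k-\hat\mu_{k-1}\|_p^p\lesssim P_k^{-p/2} r_k^{-d}=\varepsilon_k^{-p/2} r_k^{\alpha p/2-d}=\varepsilon_k^{-p/2},
\]
using $\alpha p/2=d$. This is not summable as stated, so the plain band estimate loses. The fix is to use the \emph{lower} part of the band more carefully. For $|\xi|\sim 2^{j}$ with $r_{k-1}^{-1}\le 2^j\le r_k^{-1}$, the relevant count is the number of $2^{-j}$-cubes meeting $S_k$, which is $\min(P_k, P_{k-1}(2^j r_{k-1})^d)$. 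Dyadically, $|\hat\mu(\xi)|^2\approx$ (number of occupied $2^{-j}$-cubes)$^{-1}$. Summing over dyadic shells gives
\[
\|\hat\mu\|_p^p\approx\sum_j 2^{jd}\,n_j^{-p/2},
\]
where $n_j$ is the number of occupied cubes at scale $2^{-j}$. With $n_j\approx 2^{j\alpha}\phi(j)$, where $\phi$ is increasing (the count beats the $\alpha$-rate at intermediate scales while $\varepsilon_k\to0$ only at the endpoints $r_k$), this sum is $\sum_j \phi(j)^{-p/2}$.

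\emph{The main obstacle.} We must reconcile two requirements: $\sum_j \phi(j)^{-p/2}<\infty$ needs $n_j\gg 2^{j\alpha}$ at most scales, while $\mathcal{H}_\alpha(S)=0$ needs $n_j\ll 2^{j\alpha}$ along a subsequence. The resolution is that $\mathcal{H}_\alpha$ only needs a \emph{sparse} sequence of good scales. So at each generation we keep the set very "fat" (dimension above $\alpha$) on most scales, and then perform a single sharp thinning step at scale $r_k$, where the count drops to $\varepsilon_k r_k^{-\alpha}$. The $L_p$ contribution of the few scales near each thinning step is about $\varepsilon_k^{-p/2}$ times a number of shells that we keep bounded. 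To make this summable anyway, we exploit the fact that right after a thinning, random sub-selection restores $n_j$ above $2^{j\alpha}$ within $O(1)$ dyadic scales, while the dangerous contribution lives only on $|\xi|\sim r_k^{-1}$.

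Concretely, I would try choosing the thinning so that $n_j2^{-j\alpha}$ dips to $\varepsilon_k$ only on a single shell, with $\varepsilon_k^{-p/2}$ weighted by $\hat{\mathbf 1}$ decay. Failing that, I would switch to the $\sup_k$ liminf trick, taking $\varepsilon_k$ tending to $0$ but with the bad shells having $|\hat\mu|$ controlled by the next generation's averaging rather than by the bound $P_k^{-1/2}$. Verifying rigorously that random selection gives $|\hat\mu|^2\lesssim n_j^{-1}$ uniformly enough in $L_p$ (via moment bounds and a union bound over a net of frequencies) is the technical step I expect to be hardest.
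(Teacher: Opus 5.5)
There is a genuine gap. It is structural, not a matter of sharper estimates: no construction in which $\mathcal{H}_\alpha(S)=0$ is certified by single-scale coverings can carry such a measure.

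Suppose $\mu$ is a probability measure on $S\subset S_k$, where $S_k$ is a union of $P_k$ cubes of side $r_k$ with $P_kr_k^\alpha=\varepsilon_k$, and suppose $\hat\mu\in L_p$. Let $\phi\geqslant0$ be smooth, supported in the unit ball, with $\int\phi=1$, and set $\phi_\delta=\delta^{-d}\phi(\cdot/\delta)$. Then $\mu*\phi_{r_k}$ is a probability density supported in the $r_k$-neighbourhood of $S_k$, which has volume $\lesssim P_kr_k^d$. Now combine Cauchy--Schwarz, Plancherel, the bound $|\hat\mu|\leqslant1$ on $|\xi|<R$, and H\"older with exponents $p/2$, $p/(p-2)$ on $|\xi|>R$. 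Using $d(1-\frac{2}{p})=d-\alpha$, this gives
\[
1\lesssim P_kr_k^d\,\|\mu*\phi_{r_k}\|_2^2\lesssim P_kr_k^d\Big(R^d+\|\hat\mu\|_{L_p(|\xi|>R)}^2\,r_k^{-d(1-\frac{2}{p})}\Big)=\varepsilon_k\Big(R^dr_k^{d-\alpha}+\|\hat\mu\|_{L_p(|\xi|>R)}^2\Big).
\]
With $R=1$ the right-hand side tends to $0$, a contradiction. Letting $R\to\infty$ shows that even $\liminf_kP_kr_k^\alpha<\infty$ is impossible.

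Equivalently: a single-scale covering is a one-term Netrusov--Hausdorff sum, so $\mathcal{H}_{\alpha,p'/2}(S)=0$. Theorem~\ref{Netcap} with $q=p$ then forbids any nonzero distribution on $S$ with $\hat\zeta\in L_p$.

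So the non-summable $\varepsilon_k^{-p/2}$ you found is a real obstruction, not an artifact. Each remedy you propose (keeping the set fat at other scales, a dip on a single shell, averaging by later generations) still leaves $S$ inside $\approx\varepsilon_kr_k^{-\alpha}$ cubes of side $r_k$, and is therefore ruled out. Your own heuristic $\sum_j2^{jd}n_j^{-p/2}$ already contributes $\varepsilon_k^{-p/2}\geqslant1$ at every thinning scale.

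The missing idea is that $\mathcal{H}_\alpha(S)=0$ must be witnessed by coverings with wildly varying diameters, while at each fixed scale the set stays large. In the paper the statement is the case $q=p$, $\beta=1>p'/2$ of Theorem~\ref{resL}, using $\mathcal{H}_{\alpha,1}=\mathcal{H}_\alpha$, via the construction of Section~\ref{S3}. There the tree is processed one cube at a time. The cube $Q_k$ receives $M_k$ independent uniformly shifted children of side $r_k=M_k^{-p/(2d)}b(Q_k)^{p/(2d)}/(n(Q_k)+1)$, and $M_k$ is chosen after all previous data, as large as needed.

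On the Hausdorff side, layer $n$ satisfies $\sum_{n(Q)=n}l(Q)^\alpha=n^{-\alpha}\to0$ (Proposition~\ref{H=0}). On the Fourier side, the increment $b(Q_k)(\hat\nu-\hat\lambda_{Q_k})$ has $L_p^p$-norm $\lesssim b(Q_k)^{p/2}(n(Q_k)+1)^d$ and tends to $0$ in $L_{p_1}$, $p_1>p$ (Lemma~\ref{Ras}). By Lemma~\ref{p+p} its $p$-th power therefore simply adds to $\|\hat\mu_k\|_p^p$. Summing gives $\sum_n\sum_{n(Q)=n}b(Q)^{p/2}(n+1)^d\leqslant\sum_n2^{-n(p/2-1)}(n+1)^d<\infty$.

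Thus a cube of mass $b$ costs $b^{p/2}$ on the Fourier side but $b$ on the Hausdorff side, and $p/2>1$ is what makes the construction work. This requires different parents to feed separated frequency bands. Your simultaneous randomization of a whole generation makes all $P_{k-1}$ parents interfere in the same band. That is why you get $P_k^{-p/2}r_k^{-d}$ rather than $P_{k-1}^{1-p}M_k^{-p/2}r_k^{-d}$. A side benefit of randomizing one cube at a time is that only the expectation bound of Lemma~\ref{Np} and a good sample are needed, with no union bound over frequency nets.
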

		
		The present paper may be considered as an extension of~\cite{Mynotyet}.
		
	\subsection{History}
	
	We will give a brief outline of the history of the problem. See~\cite{Mynotyet} for more historic details.
	The case $d=1$ and $\zeta$ being a measure of Theorem~\ref{nonend} was considered by Salem (see~\cite{Salem}).   
	Beurling obtained a result  that implies Theorem~\ref{nonend} in the case $d=1$ (see~\cite{Beurling}).
	Edgar and Rosenblatt  proved Theorem~\ref{nonend} in the case $d-1\leqslant \alpha$ (see~\cite{EdgRos}).  
	In full generality, Theorem~\ref{nonend} follows from Kahane's work in~\cite{Kah}.

	Denote by $M^+(K)$ is the set of finite non-negative measures on $K$.	
	\begin{Def}
		Define the Fourier dimension of a set $K$:
		\begin{equation}
			\dim_F(K)=\sup\{\alpha\geqslant0\mid\ \exists \mu\in M^+(K) \text{ such that } \mu\neq 0 \text{ and } \hat{\mu}(\xi)=O(|\xi|^{-\frac{\alpha}{2}}) \}.
		\end{equation} 
	\end{Def}

	A compact set $K\subset \mathbb{R}^d$ is called a Salem set if $\dim_H(K)=\dim_F(K)$. Here $\dim_H(K)$ is the Hausdorff dimension of $K$. Every Salem set and Salem measure on it says that Theorem~\ref{nonend} cannot be strengthened to any $p>\frac{2d}{\alpha}$. 	
	Let $K$ be a Salem set of dimension $\alpha_0$, for $\frac{2d}{p}<\alpha_0<\alpha$. Then there exists measure $\mu\in M^+(K)$, such that $\hat{\mu}(\xi)=O(|\xi|^{\frac{-\alpha}{2}})$. Therefore,
	\begin{equation}
		\int\limits_{\mathbb{R}^d}|\hat{\mu}(\xi)|^pd\xi
		\lesssim
		\int\limits_{\mathbb{R}^d}\frac{1}{1+|\xi|^{\frac{\alpha p}{2}}}d\xi
		<
		\infty.
	\end{equation}
	Here and in what follows $A\lesssim B$ means there exists $C$ such that $A\leqslant CB$ and $C$ is uniform in a certain sense, and $A\asymp B$ means that $A\lesssim B$  and $B\lesssim A$.
	
	First examples of Salem sets in the case $d=1$ were found by Salem, see~\cite{Salem}. For arbitrary $d$, Salem sets were constructed by Kahane (see~\cite{Kah1985}).

	In the limit case $\alpha=2d/p$ the UP does not hold (see~\cite[Theorem~1.8]{Mynotyet}). But if one makes additional regularity assumptions on $S$, the UP holds true at the endpoint $\alpha=2d/p$, and here we recollect several results.


	\begin{Th}[Rosenblatt \cite{Ros}]\label{Rosenblatt}
		Let $S\subset \mathbb{R}^d$ be a $(d-1)$-dimensional smooth surface. Let $\zeta$ be a distribution such that $\supp\zeta\subset S$ and $\hat{\zeta}\in  L_{\frac{2d}{d-1}}(\mathbb{R}^d)$. Then $\zeta=0$.
	\end{Th}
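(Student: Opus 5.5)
We plan to argue by localization and restriction to the surface: near each point the distribution is a finite sum of transversal derivatives of densities on $S$, and then we exploit the averaged decay of $\hat\zeta$ along normal directions. First we reduce to a local statement. Take a partition of unity $\{\varphi_j\}$ of smooth compactly supported functions covering $S$, with each support small enough that $S$ is a graph there. Then $\widehat{\varphi_j\zeta}=\hat\varphi_j*\hat\zeta$, and since $\hat\varphi_j\in L_1$, Young's inequality gives $\widehat{\varphi_j\zeta}\in L_{2d/(d-1)}$. So it suffices to assume that $\zeta$ has compact support in a small patch of $S$. After a change of variables we may take this patch to be $\{x_d=\gamma(x')\}$ with $\gamma$ smooth, or, as a model case, flat. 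For a general $\gamma$ we avoid changing variables on the Fourier side and work directly with the distribution.

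Next we use the structure theorem for distributions supported on a submanifold. A compactly supported distribution of finite order $N$ supported on the graph has the form
\begin{equation}
\zeta=\sum_{k=0}^{N}\partial_\nu^k\bigl(u_k\,d\sigma\bigr),
\end{equation}
where $\partial_\nu$ is a transversal vector field, $u_k$ are distributions on $S$, and $d\sigma$ is surface measure. In the flat model this reads $\zeta=\sum_k u_k(x')\otimes\delta^{(k)}(x_d)$, so
\begin{equation}
\hat\zeta(\xi',\xi_d)=\sum_{k=0}^N (i\xi_d)^k\hat u_k(\xi').
\end{equation}
For fixed $\xi'$ this is a polynomial in $\xi_d$. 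If it is not identically zero, then $\int|\hat\zeta(\xi',\xi_d)|^p\,d\xi_d=\infty$, and by Fubini this happens for a.e. $\xi'$ unless all $\hat u_k=0$. So the flat case is immediate for any $p<\infty$.

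The real content lies in the curved case, where the critical exponent $2d/(d-1)$ matters. Here we would mollify along $S$ to reduce to the top-order term. Concretely, we pair $\hat\zeta$ against a wave packet adapted to a small cap of $S$ at scale $R$. Such a cap corresponds in frequency to a tube of length $R$ and width $R^{1/2}$ in the normal direction. We then aim to show that the $L_p$ mass of $\hat\zeta$ in the dyadic annulus $|\xi|\sim R$ is bounded below by $c\,R^{d/p}\cdot R^{-(d-1)/2}$ times a nonzero quantity depending on the top-order density $u_N$ (tested against a fixed smooth function). Summing over dyadic $R$, the exponent $d/p-(d-1)/2$ equals $0$ exactly when $p=2d/(d-1)$. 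Each dyadic annulus then contributes a fixed positive amount, so the sum diverges and $\hat\zeta\notin L_{2d/(d-1)}$. To make this lower bound rigorous we would use duality: for a Schwartz function $g$, $\langle \zeta, g\rangle=\langle\hat\zeta,\check g\rangle$. We choose $g_R$ to be Knapp-type examples, and Hölder gives $|\langle\zeta,g_R\rangle|\le\|\hat\zeta\|_{L_p(|\xi|\sim R)}\|\check g_R\|_{p'}$. It remains to take $g_R$ concentrated near $S$ with $\langle\zeta,g_R\rangle\asymp R^{N}$ times a nonzero constant, and to compute $\|\check g_R\|_{p'}$.

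The main obstacle is this last computation: arranging test functions so that the pairing with $\zeta$ stays bounded below while $\|\check g_R\|_{p'}$ is exactly balanced at the endpoint. This needs the stationary phase decay $|\widehat{d\sigma}(\xi)|\sim|\xi|^{-(d-1)/2}$, which is where the surface must be smooth and curved or, if flat, treated by the argument above. If the Knapp approach only yields boundedness rather than divergence, we would fall back on a Fubini-type argument instead. That argument restricts $\hat\zeta$ to lines normal to $S$ and uses the fact that a nonzero function which is asymptotically a sum of terms like $|t|^{-(d-1)/2}e^{it\phi}$ times polynomials is not in $L_p(dt)$ with the weight $t^{d-1}$ coming from polar coordinates. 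This again gives logarithmic divergence exactly at $p=2d/(d-1)$.
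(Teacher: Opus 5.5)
Your localization step, the use of the structure theorem, and the flat case are fine. The whole content of the theorem sits in the ``curved case'', however, and there the proposal has a genuine gap. The decisive estimate, a lower bound for $\|\hat\zeta\|_{L_p(|\xi|\sim R)}$ uniform in $R$, is only announced, and the tools you plan to use need hypotheses the theorem does not give. Stationary-phase decay $|\xi|^{-(d-1)/2}$ requires nonvanishing Gaussian curvature. So does the bounded overlap of the Knapp frequency tubes, which is exactly what makes your $\ell_{p'}$ count balance at $p=2d/(d-1)$. A general smooth hypersurface is neither flat nor nondegenerate: its curvature may vanish on an arbitrary closed subset, and $\zeta$ may be supported there, so the flat/curved dichotomy does not exhaust the cases. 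Moreover, the $u_k$ are arbitrary distributions on $S$ (of any tangential order, e.g.\ singular measures on small subsets of $S$), so $\hat\zeta$ has no asymptotic expansion along normal lines. That leaves the claim $\langle\zeta,g_R\rangle\asymp R^N$ unsupported and also defeats your Fubini fallback. Mollifying along a curved $S$ cannot repair this, because it is not a Fourier multiplier and does not preserve $\hat\zeta\in L_p$.

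The missing idea is that the endpoint is governed by the size of $S$ (its $\varepsilon$-neighbourhood has volume $O(\varepsilon)$), not by its curvature; curvature only makes the examples sharp. For instance, H\"older on $\{R_0<|\xi|<R\}$ costs the factor $|B_R|^{1-2/p}\asymp R$ exactly when $p=2d/(d-1)$. Hence $\limsup_R R^{-1}\int_{|\xi|<R}|\hat\zeta|^2\lesssim\|\hat\zeta\|^2_{L_p(|\xi|>R_0)}$, which tends to $0$ as $R_0\to\infty$. The Agmon--H\"ormander theorem then finishes: if $\zeta$ is supported on a $C^1$ hypersurface and $\sup_R R^{-1}\int_{|\xi|<R}|\hat\zeta|^2<\infty$, then $\zeta=u\,d\sigma$ with $u\in L_2(S)$, and this average tends to $c\|u\|^2_{L_2(S)}$ with $c>0$, so $u=0$. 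This handles arbitrary distributions and uses no curvature.

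The paper takes a different curvature-free route: the statement is the case $q=p=2d/(d-1)$ of Theorem~\ref{nonendmy}. After localizing as you did, $\hat\zeta\in L_p$ gives $\zeta\in B^{-1/2}_{2,p}$ (item~4 of Proposition~\ref{BesPr}). Corollary~\ref{eqvi} (capacity duality via the Ky Fan theorem and Lemma~\ref{Netrus}) replaces a nonzero $\zeta$ by a probability measure on $S$ in the same space. Theorem~\ref{besres} (Lemma~\ref{frost} fed by the bump estimate of Corollary~\ref{corbes}) shows such a measure vanishes on sets of finite $\mathcal{H}_{d-1,p'/2}$. Finally, a compact piece of a smooth hypersurface has finite $(d-1)$-packing premeasure, hence finite $\mathcal{H}_{d-1,p'/2}$ by item~4 of Proposition~\ref{HLp}.
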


	\begin{Th}[Agranovskiy, Narayanan \cite{ArgNar}]\label{AN}
		Let $S\subset \mathbb{R}^d$ be a $C^1$-smooth surface of dimension $k$. Let $\zeta$ be a distribution such that $\supp\zeta\subset S$ and $\hat{\zeta}\in  L_{\frac{2d}{k}}(\mathbb{R}^d)$. Then $\zeta=0$.
	\end{Th}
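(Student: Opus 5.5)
We argue by localization and flattening. Since $\zeta$ is supported on $S$ and $\hat\zeta$ is a function, $\zeta$ has compact support (we may assume this after multiplying by a smooth cutoff $\chi$; then $\widehat{\chi\zeta}=\hat\chi*\hat\zeta$ is still in $L_{2d/k}$ because $\hat\chi\in L_1$). So it suffices to show $\chi\zeta=0$ for $\chi$ supported in a small ball where $S$ is a $C^1$ graph. There we write $S=\{(x',\varphi(x')) : x'\in U\subset\mathbb{R}^k\}$ with $\varphi\colon U\to\mathbb{R}^{d-k}$ of class $C^1$ and $\nabla\varphi$ small, after a rotation.

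The core step is to show that a distribution $\zeta$ supported on such a graph, with $\hat\zeta\in L_p$ and $p=2d/k$, is zero. We mollify at scale $\delta$: take $\zeta_\delta=\zeta*\phi_\delta$, where $\phi_\delta=\delta^{-d}\phi(\cdot/\delta)$. Then $\zeta_\delta$ is supported in the $\delta$-neighborhood $S_\delta$ of $S$, and $|S_\delta\cap \supp\chi|\lesssim \delta^{d-k}$. For a test function $g$ we pair $\langle\zeta,g\rangle=\lim_{\delta\to0}\langle \zeta_\delta, g\rangle$ and estimate
\[
|\langle \zeta_\delta, g\rangle| = |\langle \hat\zeta\,\hat\phi(\delta\cdot), \check g\rangle|.
\]
The idea is to use the Fourier side to split frequencies into $|\xi|\le R$ and $|\xi|>R$. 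Since $\hat\zeta\in L_p$ we have $\|\hat\zeta\|_{L_p(|\xi|>R)}\to0$ as $R\to\infty$, and this smallness of the tail is what makes the endpoint work (it is the $o(1)$ replacing the $O(1)$ that appears in the counterexample of Theorem~\ref{res}). On the spatial side, we test $\zeta$ against $g\cdot\psi_\delta$, where $\psi_\delta$ is a cutoff equal to $1$ on $S_\delta$ and has width $\delta$ in the $d-k$ normal directions. Because $\zeta$ is supported on $S$, we have $\langle\zeta,g\rangle=\langle\zeta,g\psi_\delta\rangle$, and by H\"older,
\[
|\langle \zeta, g\psi_\delta\rangle|\le \|\hat\zeta\|_{L_p(|\xi|\gtrsim 1/\delta)}\,\|\widehat{g\psi_\delta}\|_{L_{p'}} + \text{(low frequency part)} .
\]
We then compute $\|\widehat{g\psi_\delta}\|_{L_{p'}}$. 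In the flat case $\widehat{g\psi_\delta}$ is essentially $\hat g$ in the $k$ tangential variables times $\delta^{d-k}\hat\eta(\delta\xi'')$ in the normal ones, so its $L_{p'}$ norm is $\asymp \delta^{d-k}\delta^{-(d-k)/p'}=\delta^{(d-k)/p}$. This is bounded only if $p=\infty$, so a plain H\"older estimate loses. The correct bookkeeping uses the tangential frequencies as well: $g$ must be chosen with tangential scale $\delta$ too, that is, we decompose $g$ into pieces of size $\delta$ along $S$, about $\delta^{-k}$ of them. Each piece $g_j\psi_\delta$ has $\|\widehat{g_j\psi_\delta}\|_{L_{p'}}\asymp \delta^{d}\delta^{-d/p'}=\delta^{d/p}=\delta^{k/2}$ when $p=2d/k$, and the Fourier transform is concentrated at $|\xi|\sim1/\delta$ up to the low-frequency part. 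Summing over the pieces with almost-orthogonality (Cauchy--Schwarz over $j$ in $\ell_2$, which gives the factor $\delta^{-k/2}$) yields $|\langle\zeta,g\rangle|\lesssim \|\hat\zeta\|_{L_p(|\xi|\gtrsim1/\delta)}+o(1)\to0$.

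The main obstacle is making the almost-orthogonality summation rigorous for $p>2$ and showing that the low-frequency contribution vanishes. First we would use the $C^1$ smoothness to approximate $S$ by tangent planes on each $\delta$-piece, with error $o(\delta)$ uniformly by the uniform continuity of $\nabla\varphi$. Then we would replace the $\ell_2$ summation by an $L_p$ square-function/interpolation argument between $L_2$ (Plancherel) and $L_\infty$. The low-frequency part is handled by subtracting the Taylor polynomial of $g$ in the normal directions, which gives cancellation as $\delta\to0$ because $\zeta$ has finite order.
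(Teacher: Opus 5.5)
Your first paragraph has the right ingredients: localization, mollification at scale $\delta$, the Minkowski bound $|S_\delta\cap\supp\chi|\lesssim\delta^{d-k}$, and a frequency split with $\|\hat\zeta\|_{L_p(|\xi|>R)}\to0$. The argument you build from them does not close, and the step you call the main obstacle is where the whole proof lies. There are three concrete problems.

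First, the flat computation is misread. $\delta^{(d-k)/p}\to0$ for every $p<\infty$, so it cannot be what singles out the endpoint (flat pieces are trivial for all finite $p$). What makes $p=2d/k$ critical is curvature, where $\widehat{g\psi_\delta}$ has no product structure.

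Second, in your display you split at $|\xi|\sim1/\delta$ and treat the ``low frequency part'' as $o(1)$. But $\widehat{g\psi_\delta}$ lives essentially on $|\xi|\lesssim1/\delta$, so that part is essentially the whole pairing. Neither tangent-plane approximation nor subtracting Taylor polynomials of $g$ makes it small. The split has to be at a \emph{fixed} $R$. Then $|\xi|\le R$ is trivially $\lesssim C_R\|g\psi_\delta\|_{L_1}\lesssim C_R\delta^{d-k}$, and everything above $R$ must be bounded by $\|\hat\zeta\|_{L_p(|\xi|>R)}\|\widehat{g\psi_\delta}\|_{L_{p'}}$. That requires $\|\widehat{g\psi_\delta}\|_{L_{p'}}=O(1)$ uniformly in $\delta$.

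Third, your piece decomposition does not deliver that bound. H\"older piece by piece gives $\delta^{k/2}$ per piece, and Cauchy--Schwarz over the $\sim\delta^{-k}$ pieces then gives $\delta^{-k/2}$, not $O(1)$. What you need is the square-root cancellation $\|\sum_j\widehat{g_j\psi_\delta}\|_{L_{p'}}\lesssim\delta^{-k/2}\max_j\|\widehat{g_j\psi_\delta}\|_{L_{p'}}$ with $p'<2$, and you give no argument for it. Interpolating between $L_2$ and $L_\infty$ on the Fourier side points the wrong way: Hausdorff--Young only reaches exponents $\geqslant2$ there, and $p'<2$.

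The missing idea is Plancherel for the whole function, followed by H\"older on the frequency ball of radius $1/\delta$. This gives $\|\widehat{g\psi_\delta}\|_{L_{p'}(|\xi|\le1/\delta)}\le|B_{1/\delta}|^{\frac12-\frac1p}\|g\psi_\delta\|_{L_2}\lesssim\delta^{-\frac{d-k}{2}}\delta^{\frac{d-k}{2}}$, and the exponents match exactly when $p=2d/k$. The region $|\xi|>1/\delta$ is handled the same way using $|\partial^\beta\psi_\delta|\lesssim\delta^{-|\beta|}$.

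Even shorter, your mollifier already finishes the proof if you estimate $\zeta_\delta=\zeta*\phi_\delta$ in $L_2$ instead of pairing with $\check g$. With $C_R$ independent of $\delta$,
\[
\|\zeta_\delta\|_{L_2}\le\|\hat\phi\|_{L_\infty}\|\hat\zeta\|_{L_2(|\xi|\le R)}+\|\hat\zeta\|_{L_p(|\xi|>R)}\|\hat\phi(\delta\,\cdot)\|_{L_{\frac{2p}{p-2}}}\lesssim C_R+\|\hat\zeta\|_{L_p(|\xi|>R)}\,\delta^{-\frac{d-k}{2}}.
\]
Hence $|\langle\zeta_\delta,g\rangle|\le\|\zeta_\delta\|_{L_2}\|g\|_{L_\infty}|(S\cap\supp\chi)_\delta|^{1/2}\lesssim C_R\delta^{\frac{d-k}{2}}+\|\hat\zeta\|_{L_p(|\xi|>R)}$. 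Let $\delta\to0$, then $R\to\infty$. No tangent planes, square functions or Taylor expansions are needed, only the Minkowski bound.

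For comparison, the paper does not prove this theorem directly. It is contained in Theorem~\ref{nonendmy} with $q=p$: a compact piece of a $C^1$ $k$-surface has finite $k$-packing measure, so it is $\sigma$-finite for $\mathcal{H}_{k,p'/2}$ by Proposition~\ref{HLp}. That theorem passes through $\zeta\in B^{-(d-k)/2}_{2,p}$, the bump-sum bounds of Lemma~\ref{dd} (exactly the $L_2$ almost-orthogonality you were missing), a Frostman-type lemma and capacity duality. The repaired direct argument is much more elementary but relies on a single-scale covering. The paper's machinery is what handles Lorentz-space data and supports that are only $\sigma$-finite for a Netrusov--Hausdorff capacity.
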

	Let $\mathcal{P}_{\alpha}$ be the $\alpha$-packing measure (we will recall its definition in Section~\ref{Pre}, see~\cite[Chapter~5]{Mattila} for more information about packing measures).
	\begin{Th}[Raani \cite{Raani}]\label{Raani} 
		Let $S\subset \mathbb{R}^d$ be a compact set such that $\mathcal{P}_{\alpha}(S)<\infty$. Let $\zeta$ be a distribution such that $\supp\zeta\subset S$ and $\hat{\zeta}\in L_{\frac{2d}{\alpha}}(\mathbb{R}^d)$. Then $\zeta=0$.
	\end{Th}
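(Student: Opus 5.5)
The plan is to localize $\zeta$ to small balls and compare the $L_p$ mass of $\hat\zeta$ at large frequencies with the number of balls needed to cover $S$. The packing-measure hypothesis gives uniform control of that covering number at \emph{every} small scale, not just along a subsequence. Set $p=2d/\alpha$.

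\textbf{Step 1: a scale-invariant lower bound.} Suppose $\zeta\neq0$ and fix a Schwartz function $\varphi$ with $\langle\zeta,\varphi\rangle\neq0$. For $r>0$ we cover $S$ by $N(r)$ balls $B(x_j,r)$ and take a smooth partition of unity $\psi_j$ subordinate to the doubled balls, with $\|\partial^\gamma\psi_j\|_\infty\lesssim r^{-|\gamma|}$ and bounded overlap. Then
\begin{equation}
0<|\langle\zeta,\varphi\rangle|\le\sum_j|\langle\zeta,\varphi\psi_j\rangle|=\sum_j\Bigl|\int\hat\zeta(\xi)\,\overline{\widehat{\varphi\psi_j}}(\xi)\,d\xi\Bigr| .
\end{equation}
Each $\widehat{\varphi\psi_j}$ is concentrated in $|\xi|\lesssim 1/r$ and is bounded by $r^d$. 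We split each integral at $|\xi|=R$, where $R$ is a large fixed constant, and at $|\xi|\asymp 1/r$.

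For the high-frequency part we apply H\"older with exponents $p$ and $p'$:
\begin{equation}
\Bigl|\int_{|\xi|>R}\hat\zeta\,\overline{\widehat{\varphi\psi_j}}\Bigr|\le\|\hat\zeta\|_{L_p(|\xi|>R)}\,\|\widehat{\varphi\psi_j}\|_{p'}\lesssim \|\hat\zeta\|_{L_p(|\xi|>R)}\,r^{d/p}.
\end{equation}
Here we use $\|\widehat{\varphi\psi_j}\|_{p'}\lesssim r^d\cdot r^{-d/p'}=r^{d/p}$. Summing over $j$, this contributes at most $N(r)\,r^{d/p}\,\|\hat\zeta\|_{L_p(|\xi|>R)}$. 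Since $d/p=\alpha/2$, the factor $N(r)r^{\alpha/2}$ is bounded by $N(r)r^\alpha$ only up to the loss $r^{-\alpha/2}$.

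So the naive triangle inequality is too lossy. Instead we use orthogonality: since the $\zeta\psi_j$ have essentially disjoint supports, we estimate the sum $\sum_j\langle\zeta,\varphi\psi_j\rangle$ collectively rather than term by term.

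\textbf{Step 2: the correct estimate, via a mollified $L_2$-type argument.} Let $\rho_r(x)=r^{-d}\rho(x/r)$ be a mollifier and $S_r$ the $r$-neighbourhood of $S$. Then $\zeta*\rho_r$ is supported in $S_r$, and $\widehat{\zeta*\rho_r}=\hat\zeta\,\hat\rho(r\xi)$. Pairing with $\varphi$ and letting $\chi_r$ be a cutoff equal to $1$ on $S_r$,
\begin{equation}
\langle\zeta*\rho_r,\varphi\rangle=\langle\zeta*\rho_r,\varphi\chi_r\rangle=\int\hat\zeta(\xi)\hat\rho(r\xi)\,\overline{\widehat{\varphi\chi_r}}(\xi)\,d\xi .
\end{equation}
By H\"older this is at most $\|\hat\zeta\hat\rho(r\cdot)\|_p\,\|\widehat{\varphi\chi_r}\|_{p'}$. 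Hausdorff--Young gives $\|\widehat{\varphi\chi_r}\|_{p'}\le\|\varphi\chi_r\|_p\lesssim|S_{2r}|^{1/p}$, valid because $p\ge2$.

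The packing hypothesis $\mathcal P_\alpha(S)<\infty$ gives $|S_{2r}|\lesssim N(r)r^d\lesssim r^{d-\alpha}$ uniformly for all small $r$, after passing to a piece of $S$ of finite upper Minkowski content; packing measure decomposes $S$ into countably many such pieces, and we localize to one on which $\zeta$ is nonzero. With $\alpha=2d/p$ this yields $|S_{2r}|^{1/p}\lesssim r^{(d-\alpha)/p}$.

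\textbf{Step 3: beating the critical exponent.} The bound from Step 2 is still not small, because of the dual factor. So we instead dualize the full $L_2$ quantity:
\begin{equation}
\|\zeta*\rho_r\|_{L_2}^2=\int|\hat\zeta|^2|\hat\rho(r\xi)|^2\,d\xi .
\end{equation}
Restricting to the annulus $|\xi|\sim 1/r$ and applying H\"older gives $\lesssim\|\hat\zeta\|_{L_p(|\xi|\gtrsim R)}^2\,r^{-d(1-2/p)}$. On the other hand, Cauchy--Schwarz on the support gives $\|\zeta*\rho_r\|_{L_1}^2\le|S_{2r}|\,\|\zeta*\rho_r\|_2^2$. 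The exponents cancel exactly: $r^{d-\alpha}\cdot r^{-d+\alpha}=1$. Hence $|\langle\zeta,\varphi\rangle|\lesssim\|\hat\zeta\|_{L_p(|\xi|\gtrsim 1/r)}+o(1)$ as $r\to0$. By absolute continuity of the $L_p$ norm, the right side tends to $0$, so $\zeta=0$.

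\textbf{Main obstacle.} The hardest step is that $\zeta$ is a distribution rather than a measure, so $\|\zeta*\rho_r\|_{L_1}$ need not control $\langle\zeta,\varphi\rangle$. I would handle this by pairing directly: $\langle\zeta*\rho_r,\varphi\rangle\to\langle\zeta,\varphi\rangle$, with $|\langle\zeta*\rho_r,\varphi\chi_r\rangle|\le\|\zeta*\rho_r\|_2\,\|\varphi\|_\infty|S_{2r}|^{1/2}$. The $L_2$ norm is computed over all frequencies, and the low-frequency part, bounded by $\|\hat\zeta\|_{L_p(B_R)}$ times a factor, must be shown to be $o(r^{-(d-\alpha)/2})$; this holds because that part is $O(1)$ while $r^{-(d-\alpha)/2}\to\infty$ when $\alpha<d$. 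The other delicate point is the reduction to pieces of finite upper Minkowski content. A secondary concern is the growth of the mollified Fourier transform: $\hat\zeta$ is only polynomially bounded, but $\hat\zeta\in L_p$ already handles that.
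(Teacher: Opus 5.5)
Your Step~3, in the refined form of your last paragraph (pair $\zeta*\rho_r$ with $\varphi$, apply Cauchy--Schwarz on the support, and use Plancherel plus H\"older on the frequency side), correctly proves the statement for a compact $K$ with $|K_r|\lesssim r^{d-\alpha}$. It is a genuinely different route from the paper's.

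The one real gap is the reduction to such a piece. A distribution cannot be restricted to a compact or Borel subset of $S$, so ``localize to one on which $\zeta$ is nonzero'' needs the Baire category theorem. Cover $S$ by compact $K_i=\overline{E_i}$ with $\widetilde{\mathcal{P}}_\alpha(E_i)<\infty$. Passing to closures preserves $N_\eps\lesssim\eps^{-\alpha}$ up to constants, hence $|(K_i)_r|\lesssim r^{d-\alpha}$. Baire applied to $\supp\zeta=\bigcup_i(K_i\cap\supp\zeta)$ gives an open $U$ with $\emptyset\neq U\cap\supp\zeta\subset K_i$ for some $i$. For $\psi\in C_0^\infty(U)$ with $\psi\zeta\neq0$ you then have $\supp(\psi\zeta)\subset K_i$ and $\widehat{\psi\zeta}=\hat\zeta*\hat\psi\in L_p$, so the single-piece argument applies to $\psi\zeta$.

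Two smaller corrections:
\begin{enumerate}
\item In Step~2, Hausdorff--Young for $p>2$ reads $\|\hat g\|_p\leqslant\|g\|_{p'}$, so $\|\widehat{\varphi\chi_r}\|_{p'}\leqslant\|\varphi\chi_r\|_p$ is false. It is unused later, so delete it.
\item In Step~3, $\hat\rho(0)=1$, so the mass of $|\hat\zeta|^2|\hat\rho(r\cdot)|^2$ sits on the whole ball $|\xi|\lesssim1/r$, not on an annulus. Split at a fixed $R$ instead:
\[
|\langle\zeta*\rho_r,\varphi\rangle|^2\lesssim\|\varphi\|_\infty^2\,|K_{2r}|\Big(\|\hat\zeta\|_p^2R^{d-\alpha}+\|\hat\zeta\|^2_{L_p(|\xi|\geqslant R)}\,r^{-(d-\alpha)}\Big).
\]
Then let $r\to0$ (this uses $\alpha<d$) and afterwards $R\to\infty$. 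Only a sequence $r_n\to0$ with $|K_{2r_n}|\lesssim r_n^{d-\alpha}$ is actually needed.
\end{enumerate}

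The paper obtains the statement as the case $q=p$ of Theorem~\ref{nonendmy} ($L_{p,p}=L_p$). Item~4 of Proposition~\ref{HLp} turns $\mathcal{P}_\alpha(S)<\infty$ into $\sigma$-finiteness for $\mathcal{H}_{\frac{2d}{p},\frac{p'}{2}}$. In that proof:
\begin{itemize}
\item $\hat\zeta\in L_{p,q}$ gives $\zeta\in B^{\frac{d}{p}-\frac{d}{2}}_{2,q}$;
\item capacity duality (Ky Fan minimax and Netrusov's $cap\asymp Cap$, Corollary~\ref{eqvi}) replaces $\zeta$ by a probability measure in the same space;
\item the Frostman-type Lemma~\ref{frost}, together with the interpolated bump estimate of Corollary~\ref{corbes} for disjoint balls of arbitrary radii, shows that this measure cannot charge sets of finite $\mathcal{H}_{\frac{2d}{p},\frac{q'}{2}}$;
\item countable additivity of the measure absorbs the $\sigma$-finiteness.
\end{itemize}

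Item~4 itself uses only covers by balls of one radius, which is exactly your input. At a single scale the multi-scale bump estimate collapses to Plancherel, H\"older and $|K_r|\lesssim r^{d-\alpha}$. So you need no Besov spaces and no passage to measures, and Baire replaces countable additivity.

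Your argument is much more elementary. H\"older's inequality in Lorentz spaces also extends it to $\hat\zeta\in L_{p,q}$, $q<\infty$, on such sets. What it cannot handle is covers by balls of different sizes, i.e.\ Hausdorff-type hypotheses such as $\sigma$-finiteness for $\mathcal{H}_{\frac{2d}{p},\frac{q'}{2}}$. That is what the paper needs for the sharp Theorems~\ref{nonendmy} and~\ref{merm}.
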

	Theorem~\ref{Raani} is a generalization of Theorems~\ref{Rosenblatt} and~\ref{AN}.	
		
	Netrusov (see~\cite{Net}) studied capacities on Besov spaces, his results imply the following theorem (see~Remark~\ref{vN}).

	\begin{Th}\label{Netcap} 
		Let $2<p<\infty$ and $1\leqslant q<\infty$. Let $S\subset\mathbb{R}^d$ be a compact set such that $\mathcal{H}_{\frac{2d}{p},\frac{q'}{2}}(S)=0$. Let $\zeta$ be a distribution such that $\hat{\zeta}\in L_{p,q}(\mathbb{R}^d)$ and $\supp\zeta\subset{S}$. Then $\zeta=0$.
	\end{Th}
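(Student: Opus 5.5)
Since $\hat\zeta\in L_{p,q}$ with $p>2$, I will work with $f=\hat\zeta$ as a function in a Lorentz space whose distributional Fourier transform is supported on $S$, and argue by duality.

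\textbf{Duality step.} To show $\zeta=0$ it suffices to check $\langle \zeta,\varphi\rangle=0$ for every Schwartz $\varphi$. Writing $\langle\zeta,\varphi\rangle=\langle \hat\zeta,\check\varphi\rangle$ up to normalisation, I will approximate $\varphi$ by functions $\varphi-\psi_n$, where $\psi_n$ has the following properties:
\begin{itemize}
\item $\varphi-\psi_n$ vanishes on a neighbourhood of $S$, so that $\langle\zeta,\varphi-\psi_n\rangle=0$;
\item $\|\check\psi_n\|_{L_{p',q'}}\to 0$.
\end{itemize}
Then $|\langle\zeta,\varphi\rangle|=|\langle\hat\zeta,\check\psi_n\rangle|\le\|\hat\zeta\|_{L_{p,q}}\|\check\psi_n\|_{L_{p',q'}}\to0$.

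Since $\varphi$ is smooth, it is enough to find cutoffs $\chi_n$ with $\chi_n=1$ near $S$ and $\|\widehat{\chi_n}\|_{L_{p',q'}}\to0$, and then set $\psi_n=\chi_n\varphi$. Multiplication by a Schwartz function is harmless: $\widehat{\chi_n\varphi}=\hat\chi_n*\hat\varphi$, and convolution with an $L_1$ function is bounded on $L_{p',q'}$. So the statement reduces to a capacity statement. The set $S$ has zero capacity with respect to the norm $\|\hat\chi\|_{L_{p',q'}}$ among smooth functions equal to $1$ near $S$.

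\textbf{Embedding step.} Since $p'<2$, a Hausdorff--Young type inequality in Lorentz/Besov form controls $\|\hat\chi\|_{L_{p',q'}}$ by a Besov norm. The candidate is $\|\chi\|_{B^{s}_{2,q'}}$ with $s=d/2-d/p'=d/p-d/2$. The scale is set by matching $\hat\chi\in L_{p',q'}$: dyadic frequency blocks $\Delta_j$ with $\|\hat\chi\|_{L_{p'}(\Delta_j)}\le 2^{jd(1/p'-1/2)}\|\hat\chi\|_{L_2(\Delta_j)}$ by Hölder, and the Lorentz index $q'$ is recovered by summing the blocks in $\ell_{q'}$. The embedding I need is
$$\|\hat\chi\|_{L_{p',q'}}\lesssim \Bigl(\sum_j \bigl(2^{jd(1/p'-1/2)}\|\hat\chi\|_{L_2(\Delta_j)}\bigr)^{q'}\Bigr)^{1/q'}.$$
The right-hand side is essentially a norm of $B^{-\sigma}_{2,q'}$-type with $\sigma=d/2-d/p>0$ computed on $\chi$. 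This inequality is the direction of real interpolation between $L_1\to L_\infty$ and Plancherel, applied blockwise. I expect it to be standard, but it is delicate when $q'<p'$.

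\textbf{Capacity step (main obstacle).} It remains to invoke Netrusov's theorem: the capacity of $S$ for this Besov space vanishes when the Netrusov--Hausdorff content $\mathcal H_{2d/p,\,q'/2}(S)=0$. The index $q'/2$ arises because the $L_2$ norms are squared. Concretely, I cover $S$ by dyadic cubes $Q$ of sizes $2^{-k}$ witnessing small $\mathcal H_{\alpha,\beta}$, with $\alpha=2d/p$. I then build $\chi=\max$ (or a smoothed sum) of bumps adapted to the cubes. The block norms $\|\hat\chi\|_{L_2(\Delta_j)}^2$ are bounded by the number of cubes at level $\approx j$ times $2^{-jd}$ times the appropriate power. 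The weighted $\ell_{q'}$ sum then becomes $\bigl(\sum_k (N_k 2^{-k\alpha})^{q'/2}\bigr)^{2/q'}$, which is exactly the Netrusov--Hausdorff functional with $\beta=q'/2$. The hardest point is controlling cross-scale interference, since bumps at level $k$ also contribute to blocks $j\ne k$. To handle it I will use the sum bump construction with geometric off-diagonal decay ($2^{-|j-k|\epsilon}$) and Young's inequality for sequences. This is precisely what Netrusov's results (Remark~\ref{vN}) provide, so I would cite them for this estimate rather than redo it.
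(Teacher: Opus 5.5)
Your argument is correct and follows essentially the paper's route. The paper obtains this theorem from Netrusov's estimate $cap(S,B^{s}_{2,q'})\lesssim\mathcal{H}_{d-2s,\frac{q'}{2}}(S)$ (Remark~\ref{vN}), combined with the Fourier--Besov mapping of Proposition~\ref{BesPr} and the capacity duality of Proposition~\ref{dual}. You instead carry out the easy half of that duality by hand, pairing $\hat\zeta\in L_{p,q}$ against $\widehat{\chi_n\varphi}$ and using the dual mapping $\mathcal{F}\colon B^{\frac{d}{2}-\frac{d}{p}}_{2,q'}\to L_{p',q'}$. That mapping follows by real interpolation, so no special care is needed when $q'<p'$.

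Two corrections:
\begin{enumerate}[1.]
\item The smoothness must be positive, $s=\frac{d}{2}-\frac{d}{p}$, as your own Hölder factor $2^{jd(\frac{1}{p'}-\frac{1}{2})}$ shows. It is not $\frac{d}{p}-\frac{d}{2}$ or $-\sigma$ as you wrote. With the correct sign, $d-2s=\frac{2d}{p}$, which matches the index in the hypothesis.
\item Your argument needs cutoffs with $\chi_n=1$ near $S$, not just $\chi_n\geqslant1$. So besides Netrusov's $cap$ estimate you must invoke his equivalence $cap\asymp Cap$ (Lemma~\ref{Netrus}), then mollify and cut off as in the proof of Lemma~\ref{capz} to get $\chi_n\in C_0^\infty$.
\end{enumerate}
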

	
	Here $L_{p,q}$ is the Lorentz space and $\mathcal{H}_{\alpha,q}$ is the Netrusov--Hausdorff capacity (we will recall their definitions in the next subsection).

	\subsection{Results}

	We remind the reader the definition of the Lorentz space.
	\begin{Def}
		Let $X$ be a set and $\mu$ be a measure on it.
		Let $p,q\in(0,\infty)$. Then Lorentz spaces $L_{p,q}(X,\mu)$ and $L_{p,\infty}(X,\mu)$ are defined by the quasi-norms
		\begin{equation}
			\|f\|^q_{L_{p,q}}=q\int_{0}^{\infty}(m_f(t)t^p)^{\frac{q}{p}}\frac{dt}{t}=\frac{q}{p}\int_0^{\infty}f^*(t)^qt^{\frac{q}{p}}\frac{dt}{t},
		\end{equation} 
		
		\begin{equation}
			\|f\|_{L_{p,\infty}}=\sup\{m_f(t)^{\frac{1}{p}}t\}
			=
			\sup\{f^*(t)t^{\frac{1}{p}}\}.
		\end{equation} 
		Here $m_f(t)=\mu(\{x|\ |f(x)|\geqslant t\})$ is the distribution function of $f$ and $f^*(t)=\inf\{a|\ m_f(a)\leqslant t\}$ is the decreasing rearrangement of $f$.
		The Lorentz sequence space $\ell_{p, q}$ is a special case of Lorentz spaces for $X=\mathbb{N}$ and $\mu$ being the counting measure.
	\end{Def}
	
	The $L_{p,p}$-seminorm is equal to the standard $L_p$-norm. For $q_1<q_2$ there is a continuous embedding $L_{p,q_1}\hookrightarrow L_{p,q_2}$.	
	See Chapter 1.4 in the book~\cite{Grafakos2014} for basic information about Lorentz spaces.
	
	To formulate our result, we also recall the definitions of the Netrusov--Hausdorff capacities introduced in~\cite{Net}.
	\begin{Def}
		Let $\delta>0$. Let $\Delta_k=\{x\in\mathbb{R}|\ 2^{-k-1}\leqslant x<2^{-k}\}$. The Netrusov--Hausdorff capacity $\mathcal{H}_{\alpha,q}$ is defined by the following formula
		\begin{equation}			
			\mathcal{H}_{\alpha,q}(F,\delta)
			=
			\inf\Big\{\sum\limits_{k}\Big(\sum\limits_{\diam B_j\in\Delta_k}(\diam B_j)^\alpha\Big)^q\Big|\ F\subset\cup B_j,\ \diam B_j <\delta\Big\},\ q<\infty,
		\end{equation}
		\begin{equation}			
			\mathcal{H}_{\alpha,\infty}(F,\delta)
			=
			\inf\Big\{\sup\limits_{k}\sum\limits_{\diam B_j\in\Delta_k}(\diam B_j)^\alpha\Big|\ F\subset\cup B_j,\ \diam B_j<\delta\Big\},
		\end{equation}
		\begin{equation}			
			\mathcal{H}_{\alpha,q}(F)
			=
			\lim\limits_{\delta\rightarrow 0}\mathcal{H}_{\alpha,q}(F,\delta).
		\end{equation}
	\end{Def}
	The capacity $\mathcal{H}_{\alpha,1}$ is equal to the standard Hausdorff measure $\mathcal{H}_\alpha$. 
	See Proposition~\ref{HLp} below for the basic properties of $\mathcal{H}_{\alpha,q}$. 
	The quantity
	\begin{equation}
		\Big(\sum\limits_{k}\Big(\sum\limits_{|a_j|\in\Delta_k}|a_j|^\alpha\Big)^q\Big)^{\frac{1}{q\alpha}}
	\end{equation}
	is equivalent to the value of the quasi-norm of the sequence $a=\{a_j\}_j$ in the Lorentz-sequence space $\ell_{\alpha,\alpha q}$  (see Lemma~\ref{Lornor}). So, the Netrusov--Hausdorff set functions satisfy
	\begin{equation}\label{NHq}		
		\mathcal{H}_{\alpha,q}(F,\delta)\asymp\underset{\diam B_j<\delta}{\underset{F\subset\cup B_j}{\inf}}\|\{\diam B_j\}_{j=1}^{\infty}\|^{\alpha q}_{\ell_{\alpha,\alpha q}},
	\end{equation}
	\begin{equation}\label{NHi}			
		\mathcal{H}_{\alpha,\infty}(F,\delta)\asymp\underset{\diam B_j<\delta}{\underset{F\subset\cup B_j}{\inf}}\|\{\diam B_j\}_{j=1}^{\infty}\|^{\alpha}_{\ell_{\alpha,\infty}}.
	\end{equation}

	Thus, the  Netrusov--Hausdorff capacities can be perceived as something in between Hausdorff measures and Lorentz spaces.

	  Let $q'$ be the dual exponent to $q$, i.e $\frac{1}{q}+\frac{1}{q'}=1$. Here are the main results of the paper.

	\begin{Th}\label{nonendmy} 
		Let $2<p<\infty$ and $1\leqslant q<\infty$. Let $S\subset\mathbb{R}^d$ be a compact set such that $S$ is $\sigma$-finite relatively to $\mathcal{H}_{\frac{2d}{p},\frac{q'}{2}}$. Let $\zeta$ be a distribution such that $\hat{\zeta}\in L_{p,q}(\mathbb{R}^d)$ and $\supp\zeta\subset{S}$. Then $\zeta=0$.
	\end{Th}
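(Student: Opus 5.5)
We follow the classical Beurling--Kahane scheme: test $\zeta$ against a smooth cutoff adapted to a good covering of $S$, and show that the pairing tends to zero. Fix a Schwartz function $\varphi$. It suffices to prove $\langle\zeta,\varphi\rangle=0$. Since $\hat\zeta\in L_{p,q}$ with $p>2$, $\zeta$ has finite order near $S$. The standard reduction localises $\zeta$ by multiplying by a fixed smooth bump, which keeps the Fourier transform in $L_{p,q}$ (convolution with a Schwartz function), so we may assume $\zeta$ is compactly supported.

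\textbf{Reduction from $\sigma$-finite to small capacity.} We write $S=\bigcup_n S_n$ with $\mathcal{H}_{\frac{2d}{p},\frac{q'}{2}}(S_n)<\infty$. Put $\alpha=2d/p$ and $r=q'/2$. The first step is a localisation lemma of the following kind. If $\zeta\neq0$, then by a Baire-category argument applied to the closed pieces (we may take the $S_n$ compact after regularising the capacity), there is a point $x_0\in\supp\zeta$ and a ball $B$ around it with $\supp\zeta\cap B\subset S_n$ for some $n$; alternatively we restrict to a portion where the capacity is finite. After multiplying by a bump supported in $B$, we reduce to the case $\mathcal{H}_{\alpha,r}(S)<\infty$. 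Next we use the key gain: finiteness of the capacity gives coverings with $\sum_k(\sum_{\diam B_j\in\Delta_k}(\diam B_j)^\alpha)^r\le C$, and $\diam B_j<\delta$ forces only scales $k\ge \log_2(1/\delta)$ to appear. By passing to tails, the contribution of scale $k$ is $\ell_r$-summable in $k$, which gives a $o(1)$ gain, as in the proof that finite Hausdorff measure suffices in Theorem~\ref{nonend}.

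\textbf{Main estimate.} For a covering $\{B_j\}$ of $S$ with radii $\rho_j\in\Delta_{k_j}$, we build $\psi=\sum_j\psi_j$ with $\psi_j$ a bump of scale $\rho_j$ equal to $1$ near $S\cap B_j$ (a partition-of-unity style construction; $1-\psi$ vanishes near $S$). Then $\langle\zeta,\varphi\rangle=\langle\zeta,\varphi\psi\rangle$, and we get
\[
|\langle\zeta,\varphi\psi\rangle|\le \|\hat\zeta\cdot\chi\|_{L_{p,q}}\,\|\widehat{\varphi\psi}\|_{L_{p',q'}},
\]
where $\chi$ localises to high frequencies, up to an error that is handled by the low-frequency part. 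We group $\psi$ by dyadic scale, $\Psi_k=\sum_{\rho_j\in\Delta_k}\psi_j$. Since $p'<2$, by Hausdorff--Young in Lorentz form ($L_{p,q}\to$ interpolation between $L_1$ and $L_2$), $\|\widehat{\Psi_k}\|_{L_{p',q'}}\lesssim\|\Psi_k\|_{L_{p,q'}}$; more precisely we bound $\|\widehat{\Psi_k}\|_{L_{p'}}^{p'}$. Finite overlap gives $\|\Psi_k\|_{L_1}\asymp N_k2^{-kd}$ and $\|\Psi_k\|_{L_2}^2\asymp N_k2^{-kd}$, where $N_k2^{-k\alpha}=:m_k$ is the scale-$k$ mass. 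The choice $\alpha=2d/p$ makes $\|\widehat{\Psi_k}\|_{L_{p'}}\lesssim m_k^{1/p'}\cdot(\text{power of }2^{-k})$ exactly balanced, so each scale contributes $\asymp m_k^{1/2}$ in the $L_2$-type sense, and $\widehat{\Psi_k}$ is essentially concentrated at frequencies $|\xi|\sim2^k$. Almost-orthogonality of the frequency pieces across $k$, combined with the Lorentz structure, then yields
\[
|\langle\zeta,\varphi\rangle|\lesssim \sum_k \|\hat\zeta\|_{L_{p,q}(|\xi|\sim 2^k)}\, m_k^{1/2}\le \Big(\sum_{k\ge k_\delta}\|\hat\zeta\|^q_{L_{p,q}(|\xi|\sim2^k)}\Big)^{1/q}\Big(\sum_k m_k^{q'/2}\Big)^{1/q'}
\]
by H\"older. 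The second factor is $\lesssim \mathcal{H}_{\alpha,r}(S,\delta)^{1/q'}$, which is bounded. The first factor is a tail of the $L_{p,q}$ norm restricted to $|\xi|\gtrsim 1/\delta$, and it tends to $0$ as $\delta\to0$. Hence $\langle\zeta,\varphi\rangle=0$.

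\textbf{Main obstacle.} The delicate step is the dyadic decomposition inequality $\sum_k\|f\|_{L_{p,q}(A_k)}^q\lesssim\|f\|_{L_{p,q}}^q$ for disjoint annuli $A_k$ (true for $q\ge p$, but problematic for $q<p$). We also need to justify replacing $\widehat{\varphi\Psi_k}$ by something frequency-localised at $2^k$, since the tails of $\widehat{\Psi_k}$ leak to other annuli. We would first try to avoid the annular splitting: put $\Phi=\varphi\psi$, estimate $\|\hat\Phi\|_{L_{p',q'}}$ directly via the $\ell_{q'}$-over-scales structure using real interpolation, that is, the $K$-functional between $L_1$ and $L_2$ with the pieces $\Psi_k$ as the natural decomposition. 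We then apply smallness by choosing coverings whose scales are all fine, together with $\hat\zeta*\hat\varphi$ being in $L_{p,q}$ with vanishing contribution at frequencies $\lesssim1/\delta$ (using that low frequencies of $\Phi$ have tiny $L_1$ mass $\sum_j\rho_j^d\to0$).
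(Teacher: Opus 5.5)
\textbf{Gap: the reduction from $\sigma$-finite to finite capacity fails.} Your Baire step needs the pieces $S_n$ to be closed, and in general they cannot be taken closed. $\sigma$-finiteness is a measure-theoretic property, and a compact set can be $\sigma$-finite while every relatively open piece of it has infinite capacity. Here is an example. Take $d=1$, $\alpha=\log 2/\log 3$, the middle-thirds Cantor set $K_0$ with gaps $g_i$, and similarities $\phi_i$ of ratio $\lambda_i\leqslant |g_i|/3$ mapping $[0,1]$ into $g_i$, chosen with $\sum_i\lambda_i^\alpha=1$. Let $S$ be the compact set with $S=K_0\sqcup\bigsqcup_i\phi_i(S)$, i.e.\ the closure of $\bigcup_w\phi_w(K_0)$ over finite words $w$. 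Then:
\begin{itemize}
\item $S$ is the union of the countably many copies $\phi_w(K_0)$ and a limit set of finite $\mathcal{H}_\alpha$-measure, so it is $\sigma$-finite.
\item Every portion of $S$ contains a whole copy $\phi_w(S)$, and $\mathcal{H}_\alpha(S)=\infty$ because $\mathcal{H}_\alpha(S)=1+\mathcal{H}_\alpha(S)$. So every portion has infinite $\mathcal{H}_\alpha=\mathcal{H}_{\alpha,1}$-measure.
\end{itemize}
This is the case $q=2$, $p=2/\alpha$ of the theorem. For $\zeta$ with $\supp\zeta=S$ there is no portion of finite capacity to localise to. A distribution, unlike a measure, has no countable additivity that would let you treat the $S_n$ one at a time.

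\textbf{The missing idea: replace $\zeta$ by a positive measure, as the paper does.}
\begin{itemize}
\item By Proposition~\ref{BesPr}, item~\ref{B4}, $\zeta\in B^{-s}_{2,q}$ with $s=\frac{d}{2}-\frac{d}{p}$.
\item A nonzero such $\zeta$ on $S$ means $Cap^*(S,B^{-s}_{2,q})>0$.
\item Ky Fan duality (Proposition~\ref{dual}) together with Netrusov's $cap\asymp Cap$ (Lemma~\ref{Netrus}) turns this into $cap^*(S,B^{-s}_{2,q})>0$. That is, there is a probability measure $\mu$ on $S$ with $\mu\in B^{-s}_{2,q}$ (Corollary~\ref{eqvi}).
\item A Frostman-type lemma then gives $\mu(S_n)=0$ whenever $\mathcal{H}_{\frac{2d}{p},\frac{q'}{2}}(S_n)<\infty$ (Theorem~\ref{besres}).
\item Countable subadditivity gives $\mu(S)=0$, a contradiction.
\end{itemize}

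\textbf{Your main estimate also fails as written, even on a piece of finite capacity.}
\begin{itemize}
\item The bound $\|\widehat{\Psi_k}\|_{L_{p',q'}}\lesssim\|\Psi_k\|_{L_{p,q'}}$ is Hausdorff--Young in the wrong direction.
\item $\widehat{\Psi_k}$ is not concentrated on $|\xi|\sim 2^k$. Since $\Psi_k\geqslant0$, it has size about $\int\Psi_k$ near the origin and is spread over the whole ball $|\xi|\lesssim 2^k$. So the scales are not almost orthogonal in the naive sense, and the triangle inequality only gives $\sum_k m_k^{1/2}$.
\item The annular splitting of $L_{p,q}$ fails for $q<p$, as you yourself note.
\item $\sum_j\psi_j$ is $\geqslant1$, not $=1$, near $S$. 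A genuine multiscale partition of unity need not have controlled Besov norm when adjacent balls have very different radii. For a general distribution you need $=1$, and this is exactly where Netrusov's lemma enters.
\end{itemize}
The mechanism that works is Besov duality obtained by real interpolation. First, $\mathcal F\colon L_{p,q}\to B^{-s}_{2,q}$. Second, for disjoint families of balls, $\|\sum_j\varphi_{x_j,r_j}\|_{B^{s}_{2,q'}}\lesssim(\sum_k m_k^{q'/2})^{1/q'}$, obtained by interpolating the $L_2$ and $W^d_2$ bounds of Lemma~\ref{dd} (Lemma~\ref{strash}, Corollary~\ref{corbes}). Your closing paragraph points toward this but does not carry it out.
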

	
	\begin{Th}\label{merm}
		Let $2<p<\infty$ and $1\leqslant q<\infty$. Let $\mu$ be  a measure of bounded variation such that $\hat{\mu}\in L_{p,q}(\mathbb{R}^d)$. Then for every set $A\subset\mathbb{R}^d$ such that $\mathcal{H}_{\frac{2d}{p},\frac{q'}{2}}(A)<\infty$, we have $\mu(A)=0$.
	\end{Th}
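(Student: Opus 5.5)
Set $\alpha=\frac{2d}{p}$ and $\gamma=\frac{q'}{2}$. Since $\mu$ is a complex measure of bounded variation, I reduce to showing $|\mu|(A)=0$. It suffices to show $|\mu|(K)=0$ for every compact $K\subset A$: inner regularity gives this for Borel $A$, and for general $A$ I use a $G_\delta$ hull with the same finite capacity. So fix a compact $K$ with $\mathcal{H}_{\alpha,\gamma}(K)<\infty$. I argue by contradiction: if $\mu|_K\ne0$, there is a point $x_0\in K$ that is a Lebesgue-density point of $\mu|_K$ in the following sense. For small balls $B$ centred at $x_0$, $|\mu(B\cap K)|\asymp|\mu|(B)$, and $\mu(B\setminus K)$ is negligible relative to this. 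I get such a point from the Radon--Nikodym derivative of $\mu|_K$ with respect to $|\mu|$, which has modulus $1$ a.e.

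The core estimate is a localized duality. Take a covering of $K$ by balls $B_j$ of diameters $r_j<\delta$ with $\|\{r_j\}\|_{\ell_{\alpha,\alpha\gamma}}^{\alpha\gamma}\lesssim \mathcal{H}_{\alpha,\gamma}(K)+1$, using~\eqref{NHq}. For each $j$ I form a bump $\varphi_j$, equal to $1$ on $B_j$ and supported on $2B_j$, with $\|\hat\varphi_j\|_{L_{p',\cdot}}\lesssim r_j^{d/p}$. I set $\Phi=\sum_j c_j\varphi_j$ for suitable weights $c_j$, and estimate
\begin{equation*}
\Big|\int\Phi\,d\mu\Big|=\Big|\int\hat\Phi\,\overline{\hat\mu}\Big|\lesssim\|\hat\mu\|_{L_{p,q}}\|\hat\Phi\|_{L_{p',q'}}.
\end{equation*}
Grouping the balls by dyadic scale $\Delta_k$, the functions $\hat\varphi_j$ within one scale are almost orthogonal. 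Via an $L_2$/$L_\infty$ interpolation (the $L_2$ norm is controlled by overlap and the $L_\infty$ norm by total mass), this gives $\|\hat\Phi_k\|_{L_{p'}}$ controlled by $\big(\sum_{r_j\in\Delta_k}r_j^{\alpha}\big)^{1/2}$ up to the weights. Across scales, the Lorentz $q'$-summation together with the fact that $\hat\Phi_k$ is concentrated at frequencies $\sim 2^k$ should give
\begin{equation*}
\|\hat\Phi\|_{L_{p',q'}}\lesssim\Big(\sum_k\Big(\sum_{r_j\in\Delta_k}r_j^\alpha\Big)^{q'/2}\Big)^{1/q'}.
\end{equation*}
This is exactly why the exponent is $\frac{q'}{2}$.

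To conclude, I apply this estimate not to all of $K$ but to $K\cap B(x_0,\rho)$. I split $\hat\mu$ into the part at frequencies $\lesssim\rho^{-1}$ and the tail. The tail has $L_{p,q}$ quasinorm tending to $0$ as $\rho\to0$, because $q<\infty$ and the norm is absolutely continuous. The low-frequency part only sees $\Phi$ averaged at scale $\rho$. This yields $|\mu|(B(x_0,\rho))\le o(1)\cdot\mathcal{H}_{\alpha,\gamma}(K\cap B(x_0,\rho),\delta)^{1/(q'\cdot)}$ plus lower-order terms. Combining this with a density bound of the form $\mathcal{H}_{\alpha,\gamma}(K\cap B)\lesssim$ (something summable), obtained from a Vitali-type covering argument with $|\mu|$ on sets of finite capacity, forces $|\mu|(K)=0$.

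The main obstacle is the almost-orthogonality step across overlapping balls of the same scale, together with the precise Lorentz-space bookkeeping that produces the exponent $q'/2$ rather than $q'$. I would first try to handle this by a Littlewood--Paley decomposition of $\hat\mu$ adapted to the scales $2^k$.
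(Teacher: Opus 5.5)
\paragraph{Gap 1: the core estimate.} Your target inequality $\|\hat\Phi\|_{L_{p',q'}}\lesssim\big(\sum_k(\sum_{r_j\in\Delta_k}r_j^{\alpha})^{q'/2}\big)^{1/q'}$ for a disjoint family is true. But it is the heart of the theorem, and the route you sketch does not reach it.

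First, $p>2$ gives $p'<2$, so $L_{p'}$ is not between $L_2$ and $L_\infty$. The one-scale bound needs H\"older on $\{|\xi|\lesssim 2^k\}$ plus a derivative bound for the tail.

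More seriously, $\hat\Phi_k$ is \emph{not} concentrated at $|\xi|\sim 2^k$. Since $\hat\varphi_j(0)=\int\varphi_j>0$, if the balls of scale $2^{-k}$ pack a ball of radius $2^{-m}$, a fixed proportion of $\|\hat\Phi_k\|_{L_2}^2$ lies in $\{|\xi|\lesssim 2^m\}$. So all scales $k\geqslant m$ pile up in the same band.

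Even for disjointly supported pieces, an $\ell_{q'}$-sum of $L_{p'}$ norms does not control the $L_{p',q'}$ quasi-norm. For $q<p$, $N$ pieces $h\chi_{E_i}$ with $|E_i|=e$ give the ratio $N^{1/p'-1/q'}$. For $q>p$, a single piece can lie in $L_{p'}\setminus L_{p',q'}$.

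The missing idea is to measure every dyadic band in $L_2$. Dualizing item~\ref{B4} of Proposition~\ref{BesPr} gives $\|\hat\Phi\|_{L_{p',q'}}\lesssim\|\Phi\|_{B^{s}_{2,q'}}$ with $s=\frac d2-\frac dp>0$. The paper equivalently uses $\mu\in B^{-s}_{2,q}$ and $(B^{-s}_{2,q})'=B^{s}_{2,q'}$.

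In $B^{s}_{2,q'}$ the bump estimate does hold. This is Corollary~\ref{corbes}, obtained by interpolating the $L_2$ and $W^d_2$ bounds of Lemma~\ref{dd} through Lemma~\ref{strash}; the choice $h=q'$, $p=2$ there is what produces $q'/2$. Directly, for $m\leqslant k$ one has $2^{ms}\|\psi_m*\Phi_k\|_{L_2}\lesssim 2^{-(k-m)s}(\sum_{r_j\in\Delta_k}r_j^{\alpha})^{1/2}$. This is exactly where $s>0$ (i.e. $\alpha<d$) tames the low-frequency leakage. Bands $m>k$ decay rapidly, and Young's inequality in $\ell_{q'}$ finishes. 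The overlap problem you flag is removed by taking centres in $K$ and a Morse-type selection into $\theta(d)$ disjoint subfamilies (Lemma~\ref{pok1}).

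\paragraph{Gap 2: the endgame.} The endgame does not close as written either. With the frequency cutoff tied to $\rho$, you only get $|\mu|(K\cap B(x_0,\rho))=o(1)$, which contradicts nothing. Bounds by a power smaller than $1$ of $\mathcal{H}_{\alpha,q'/2}$ cannot be summed over a Vitali-type partition, since they blow up over many small pieces. Also, your density property concerns balls centred at $x_0$, while the bumps are centred at the $x_j$.

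The paper instead proves a \emph{global} Frostman-type bound (Lemma~\ref{frost}, Corollary~\ref{dualeq}). For every finite measure $\nu$, $|\nu|(A)$ is $\lesssim\|\nu\|_{B^{-s}_{2,q}}$ times a positive power of $\mathcal{H}_{\alpha,q'/2}(A)$. Positivity at every centre below a fixed scale comes from the Hahn decomposition and the sets $P^{(N)}_{+,\varepsilon}$.

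It then applies this bound to $\nu=\mu-g_j$, with Schwartz $g_j\to\mu$ in $B^{-s}_{2,q}$; this is possible because $q<\infty$. Since $A$ is Lebesgue-null, $|\mu-g_j|(A)=|\mu|(A)$, and letting $j\to\infty$ gives $\mu(A)=0$. In your language: the frequency cutoff must be independent of the localization, and the low-frequency part is killed by sending the covering radii to $0$.
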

	
	\begin{Th}\label{resL}
		Let $2<p<\infty$, $1< q<\infty$ and $\beta>\frac{q'}{2}$. There exists a compact set $S\subset \mathbb{R}^d$ and a probability measure $\mu$ such that $\supp\mu\subset S$, $\hat{\mu}\in L_{p,q}(\mathbb{R}^d)$ and $\mathcal{H}_{\frac{2d}{p},\beta}(S)=0$.
	\end{Th}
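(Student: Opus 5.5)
We prove Theorem~\ref{resL} by a random Cantor-type construction, as in~\cite{Mynotyet}, now tuned to the Lorentz scale. Set $\alpha=\frac{2d}{p}$. Choose a rapidly increasing sequence of scales $N_1<N_2<\dots$ and a sequence of integers $M_k$. At stage $k$, each surviving cube of side $\ell_{k-1}$ is subdivided into cubes of side $\ell_k=\ell_{k-1}/N_k$. Inside it we keep $M_k$ subcubes chosen at random, independently across parents. The limit measure $\mu=\lim\mu_k$ is defined as follows: $\mu_k$ is normalized Lebesgue measure on the kept cubes, with mass split equally, and we smooth with a bump at scale $\ell_k$. The counts are chosen so that the number of cubes at stage $k$ is
\[
n_k=M_1\cdots M_k\approx \ell_k^{-\alpha}\,\varphi_k,
\]
where $\varphi_k\to 0$ is a slowly decaying correction.

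The covering by stage-$k$ cubes gives
\[
\mathcal{H}_{\alpha,\beta}(S)\lesssim \bigl(n_k\ell_k^{\alpha}\bigr)^{\beta}\approx\varphi_k^{\beta}.
\]
This bound uses only one dyadic block, so it already tends to $0$ whenever $\varphi_k\to0$. The real constraint therefore comes from the Fourier side. Using a mixture of scales, or equivalently distributing the covering over many $\Delta_j$, shows that the sharp use of $\beta$ enters through $\sum_k \varphi_k^{\beta}$ versus the Fourier-side requirement. We will pick $\varphi_k$ so that $\mathcal{H}_{\alpha,\beta}$-smallness holds exactly when $\beta>q'/2$.

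The Fourier estimate is the heart of the argument. Write
\[
\hat\mu=\sum_k\bigl(\hat\mu_k-\hat\mu_{k-1}\bigr).
\]
By randomness, the increment $\hat\mu_k-\hat\mu_{k-1}$ has mean zero. It is essentially supported in frequencies $|\xi|\lesssim \ell_k^{-1}$. A Hoeffding/Bernstein-type bound shows that, with high probability, on the annulus $|\xi|\sim \ell_{k}^{-1}$ we have
\[
|\hat\mu(\xi)|\lesssim n_k^{-1/2}\sqrt{\log}\approx \ell_k^{\alpha/2}\varphi_k^{-1/2}\cdot(\text{log factor}).
\]
The log factor is removable via a union bound over a net of $|\xi|\le \ell_k^{-C}$, together with Bernstein's inequality for the derivative. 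Since $\ell_k^{\alpha/2}=\ell_k^{d/p}$, the distribution function of $\hat\mu$ on the $k$-th annulus has size about $\ell_k^{-d}$ at height $\ell_k^{d/p}\varphi_k^{-1/2}$. Hence
\[
\|\hat\mu\|_{L_{p,q}}^q\approx\sum_k \varphi_k^{-q/2},
\]
which diverges if $\varphi_k\to0$.

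This is the main obstacle. A single-scale jump forces $\varphi_k\gtrsim1$, so the construction must spread the mass decrease over many intermediate dyadic scales. We therefore make $N_k=2$ for most steps, i.e. use dyadic refinement with $M_k\in\{2^{\alpha}$-ish, rounded$\}$ chosen so that $n_j\ell_j^{\alpha}=\varphi_j$ varies slowly in $j$. Then the annulus $|\xi|\sim 2^{j}$ contributes $\varphi_j^{-q/2}$ times a weight. The correct bookkeeping, via Lemma~\ref{Lornor} and the Lorentz-norm characterisation of $\ell_{\alpha,\alpha\beta}$, turns the two requirements into the following pair of conditions:
\[
\sum_j \varphi_j^{-q/2}\,\cdot(\text{gain})<\infty,\qquad \inf_{\text{covers}}\sum_k\Bigl(\sum_{\Delta_k}\Bigr)^{\beta}=0.
\]
In fact the random Fourier bound improves to $\|\hat\mu\|_{L_{p,q}}^q\lesssim \sum_j (\varphi_j)^{q/2}$ by square-function cancellation over scales. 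The precise claim is that the $L_{p,q}$ quasi-norm of $\sum_j$ of almost-orthogonal annular pieces of height $2^{-jd/p}a_j$ is $\|a\|_{\ell_q}$. The Netrusov cover optimally allocates dyadic blocks, giving $\mathcal H_{\alpha,\beta}\asymp \inf_J\sum_{j\ge J}\varphi_j^{2\beta}\cdot(\ldots)$.

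With $a_j=\varphi_j^{1/2}$, we need $\sum a_j^{q}<\infty$ but $\sum a_j^{2\beta}$ tails $\to 0$ in the cover sense. We then choose $\varphi_j=(j\log^2 j)^{-2/q}$-type so that both hold, using $\beta>q'/2$. I expect the delicate verification to be the sharp almost-orthogonality in $L_{p,q}$ (rather than $L_p$) for $q>p$ versus $q<p$. I would first try to handle it by real interpolation between $L_2$ and $L_\infty$ estimates of the random pieces.
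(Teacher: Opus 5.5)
Your argument has a genuine gap, and it is structural rather than technical.

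\textbf{Why the construction cannot work.} Your capacity bound rests on covering $S$ by the $n_k$ equal cubes of stage $k$ with $n_k\ell_k^{\alpha}=\varphi_k\to0$, where $\alpha=\frac{2d}{p}$. This property alone forces $\hat\mu\notin L_{p,\infty}\supset L_{p,q}$ for \emph{every} probability measure carried by $S$, whatever the randomness and however the scales are arranged. To see this, take $\phi\geqslant0$ even, with $\phi\geqslant1$ on $[-1,1]^d$ and $\hat\phi$ bounded and compactly supported. Suppose $\mu$ is carried by $n$ cubes of side $\ell$. Sum over the half-open grid cubes $Q$ of side $\ell$ and use Cauchy--Schwarz:
\[
\frac{2^{-d}}{n}\leqslant\sum_Q\mu(Q)^2\leqslant\iint\phi\Big(\frac{x-y}{\ell}\Big)\,d\mu(x)\,d\mu(y)\lesssim\ell^{d}\int_{|\xi|\lesssim\ell^{-1}}|\hat\mu(\xi)|^2\,d\xi\lesssim\ell^{d}\,\ell^{-d(1-\frac{2}{p})}\,\|\hat\mu\|^2_{L_{p,\infty}}=\ell^{\alpha}\|\hat\mu\|^2_{L_{p,\infty}}.
\]
The last step uses $\int_B|\hat\mu|^2\leqslant\int_0^{|B|}(\hat\mu^*(t))^2\,dt$. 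Applied with $n=n_k$ and $\ell=\ell_k$, this gives $\|\hat\mu\|_{L_{p,\infty}}\gtrsim\varphi_k^{-1/2}\to\infty$.

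So your first computation, $\sum_k\varphi_k^{-q/2}=\infty$, was correct. The later claim that ``square-function cancellation over scales'' turns it into $\sum_j\varphi_j^{q/2}$ is false, because no cancellation can beat this deterministic energy bound. Switching to dyadic refinement with slowly varying $\varphi_j$ changes nothing: each stage is still one block of equal cubes.

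More generally, if $S$ carries any $\mu$ with $\hat\mu\in L_{p,\infty}$, then every cover of $S$ by balls of comparable radii has $\sum(\diam B_j)^{\alpha}\gtrsim\|\hat\mu\|^{-2}_{L_{p,\infty}}$. So smallness of $\mathcal{H}_{\alpha,\beta}(S)$ can only come from covers spread over many dyadic blocks. Note also that $\beta$ never actually enters your choice of $\varphi_j$, so the threshold $\frac{q'}{2}$ is never derived.

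\textbf{The missing idea: an inhomogeneous tree.} In the paper, cubes are refined one parent at a time. The cube $Q_k$, of weight $b(Q_k)$, receives $M_k$ random children of side $r_k$, with $M_kr_k^{\alpha}=b(Q_k)^{1/\beta}(n(Q_k)+1)^{-\alpha}$ and $r_{k+1}<r_k/2$. Hence the children of each parent form their own dyadic block.

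On the capacity side, sum over the parents with $n(Q_k)=n-1$. The layer-$n$ cover then costs $\lesssim\sum_k(M_kr_k^{\alpha})^{\beta}=n^{-\alpha\beta}\sum_kb(Q_k)=n^{-\alpha\beta}$.

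On the Fourier side,
\[
\|\hat\mu_{k+1}-\hat\mu_k\|^q_{L_{p,q}}\lesssim b(Q_k)^q(M_kr_k^{\alpha})^{-q/2}=b(Q_k)^{q-\frac{q}{2\beta}}(n(Q_k)+1)^{\frac{qd}{p}}.
\]
These increments add in the $\ell_q$ sense, because each is negligible in $L_{p_1}$, $p_1>p$, once $M_k$ is huge (Lemma~\ref{p+p}). Since $\sum_{n(Q)=n}b(Q)=1$ and $b(Q)\leqslant2^{-n}$, the resulting series converges as soon as $q-\frac{q}{2\beta}>1$, i.e.\ $\beta>\frac{q'}{2}$. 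The point is that the Fourier cost is superlinear in the small weights $b$, while the capacity cost is linear in them.

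In your scheme, all parents of a stage are refined at the same scale. Their independent noises superpose on the same annulus and add in $\ell_2$, so the whole stage behaves like a single parent with $b=1$. That is exactly why you are left with $\varphi_k^{-q/2}$ against $\varphi_k^{\beta}$, with no room in between.
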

	
	Theorem~\ref{nonendmy} is a generalization of Theorems~\ref{Raani} and~\ref{Netcap}. Theorem~\ref{resL} says that Theorem~\ref{nonendmy} is sharp. Also Theorem~\ref{merm} says that for measures of bounded variation the UP can be formulated in a stronger form than for a general distribution. 

	%

	In Section~\ref{Pre}, we provide some preliminary information.
	In Section~\ref{caps}, we study general capacities and obtain basic properties of the Netrusov--Hausdorff capacities. 
	In Section~\ref{S2}, we prove Theorems~\ref{nonendmy} and~\ref{merm} by using results from section~\ref{caps}.
	In Section~\ref{S3}, we prove Theorem~\ref{resL}.
	In Section~\ref{tech}, we give all technical proofs. 
	In Section~\ref{genops}, we provide a generalizations and formulate an open question.

	{\bf Acknowledgment.}
	I am grateful to my scientific adviser D. M. Stolyarov for statement of the problem and attention to my work.

	\section{Preliminaries and notation}\label{Pre}
	We will use the Ky Fan minimax Theorem (see~\cite[Theorem 2]{Fan}) and provide its formulation for the convenience of the reader. 
	\begin{Def}
		Let $f$ be a real-valued function defined on the product $X\times Y$ of two arbitrary sets $X$ and $Y$. A function $f$ is said to be convex on $X$, if for any two elements $x_1,x_2\in X$ and two numbers $\alpha_1\geqslant0$, $\alpha_2\geqslant0$ with $\alpha_1+\alpha_2=1$, there exists an element $x_0\in X$ such that $f(x_0,y)\leqslant \alpha_1f(x_1,y)+\alpha_2f(x_2,y)$ for all $y\in Y$.
		Similarly $f$ is said to be concave on $Y$, if for any two elements $y_1,y_2\in Y$ and two numbers $\alpha_1\geqslant0$, $\alpha_2\geqslant0$ with $\alpha_1+\alpha_2=1$, there exists an element $y_0\in Y$ such that $f(x,y_0)\geqslant \alpha_1f(x,y_1)+\alpha_2f(x,y_2)$ for all $x\in X$.
	\end{Def}
	Here the choice of $x_0$ depends on $x_1$, $x_2$, $\alpha_1$, $\alpha_2$ and $f$ only, but does not depend on $y$. Similarly, $y_0$ does not depend on $x$. Therefore, the definition above depends on the product structure of $X\times Y$.
	\begin{Th}[Ky Fan minimax Theorem]\label{KyF}
		Let $X$ be a compact Hausdorff space and let $Y$ be an arbitrary set. Let $f$ be a real-valued function on $X\times Y$ such that for every $y\in Y$, $f(\cdot,y)$ is lower semi-continuous on $X$. If $f$ is convex on $X$ and concave on $Y$, then
		\begin{equation}
			\min\limits_{\raisebox{7px}{} x\in X}\sup\limits_{y\in Y} f(x,y)
			=
			\sup\limits_{y\in Y}\min\limits_{\raisebox{7px}{}x\in X} f(x,y).
		\end{equation}
	\end{Th}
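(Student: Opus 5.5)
The inequality $\min_x\sup_y f\geqslant\sup_y\min_x f$ is trivial: for all $x',y'$ we have $\sup_y f(x',y)\geqslant f(x',y')\geqslant \min_x f(x,y')$. The minima exist because a lower semi-continuous function on a compact space attains its minimum. For $\min_x\sup_y f$ this needs a little care, since $x\mapsto\sup_y f(x,y)$ is a supremum of lower semi-continuous functions, hence lower semi-continuous (possibly $+\infty$). So all the work is in the reverse inequality. The plan has two parts: first reduce to finitely many $y$'s by compactness, then handle the finite case by a separation argument in $\mathbb{R}^n$.

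\emph{Step 1 (compactness).} Fix a real $c<\min_x\sup_y f(x,y)$ and put $K_y=\{x\in X\mid f(x,y)\leqslant c\}$. Each $K_y$ is closed by lower semi-continuity of $f(\cdot,y)$. The intersection $\bigcap_{y}K_y$ is empty, since otherwise some $x$ would satisfy $\sup_y f(x,y)\leqslant c$. Since $X$ is compact, finitely many $y_1,\dots,y_n$ already give $\bigcap_i K_{y_i}=\emptyset$. Hence
\begin{equation*}
g(x)=\max_i f(x,y_i)>c \quad\text{for all } x\in X .
\end{equation*}
The function $g$ is lower semi-continuous, so $m=\min_x g(x)$ is attained, and $m>c$.

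\emph{Step 2 (finite case via separation).} Consider
\begin{equation*}
A=\{v\in\mathbb{R}^n\mid \exists x\in X:\ v_i\geqslant f(x,y_i)\ \forall i\}.
\end{equation*}
Convexity of $f$ on $X$, in the paper's sense, makes $A$ convex. Indeed, for $v,w\in A$ realized by $x_1,x_2$, the point $x_0$ provided by the definition satisfies $f(x_0,y_i)\leqslant\alpha_1f(x_1,y_i)+\alpha_2 f(x_2,y_i)\leqslant \alpha_1v_i+\alpha_2w_i$ for all $i$ simultaneously. This is exactly where the independence of $x_0$ from $y$ is used. The open convex set $U=\{v\mid \max_i v_i<m\}$ is disjoint from $A$ by the definition of $m$. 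A separating hyperplane gives $\lambda\neq0$ with $\langle\lambda,v\rangle\geqslant\langle\lambda,u\rangle$ for $v\in A$, $u\in U$. Since $A$ is closed upward, $\lambda_i\geqslant0$, and we normalize $\sum\lambda_i=1$. Letting $u\to(m,\dots,m)$ yields
\begin{equation*}
\sum_i\lambda_i f(x,y_i)\geqslant m \quad\text{for all } x\in X .
\end{equation*}

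\emph{Step 3 (concavity).} Iterating the concavity of $f$ on $Y$ by induction on $n$, we find $y_0$ with $f(x,y_0)\geqslant\sum_i\lambda_i f(x,y_i)$ for all $x$. The inductive step writes the combination as a two-point combination of $y_n$ and a normalized combination of $y_1,\dots,y_{n-1}$. Then
\begin{equation*}
\min_x f(x,y_0)\geqslant m>c .
\end{equation*}
Since $c$ was arbitrary, $\sup_y\min_x f\geqslant\min_x\sup_y f$.

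The main obstacle is Step 2. One must check that the "convex-like" hypothesis, which is weaker than genuine convexity, really yields convexity of $A$, and handle the boundary case in the separation carefully. The latter is why $A$ is taken to be upward closed and $U$ open.
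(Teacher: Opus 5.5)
Your proof is correct and complete. The compactness reduction, the convexity of the upward-closed set $A$ (which is exactly where the $y$-independence of $x_0$ is used), the sign and normalization of $\lambda$, and the inductive merging of $y_1,\dots,y_n$ via concavity on $Y$ all check out. The paper gives no proof of its own and simply quotes Fan's Theorem~2; as far as I recall, your route (finite-intersection reduction to finitely many $y_i$, then separation in $\mathbb{R}^n$) is essentially Fan's original argument.
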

	Here we do not need any topological structure on $Y$.
	
	We need the following theorem for Lorentz spaces (see~\cite[Chapter IV, Theorem~3.13]{SteinWeiss})
	\begin{Th}\label{SW}
		Let $X$ be a set and $\mu$ be a measure on it. Suppose $T$ is a linear operator which maps the finite linear combinations of characteristic functions $\chi_{E}$ of set $E\subset X$ of finite measure into a Banach space $B$. If
		\begin{equation}
			\|T\chi_E\|_B
			\lesssim 
			\|\chi_E\|_{L_{p,1}}
			=
			\|\chi_E\|_{L_p},
		\end{equation}
		then
		\begin{equation}
			\|Tf\|_B
			\lesssim
			\|f\|_{L_{p,1}}
		\end{equation}
		for all $f$ in domain of $T$.
	\end{Th}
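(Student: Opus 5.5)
The natural route is the layer-cake decomposition, which turns the estimate for characteristic functions into an estimate for arbitrary simple functions by the triangle inequality in $B$. The domain of $T$ consists of finite linear combinations of characteristic functions of sets of finite measure, so every $f$ in it is a simple function taking finitely many values.

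First I reduce to non-negative $f$. For complex-valued $f$ I write $f=f_1-f_2+i(f_3-f_4)$, where $f_1,f_2$ are the positive and negative parts of $\operatorname{Re} f$ and $f_3,f_4$ those of $\operatorname{Im} f$. Each $f_j$ is again a non-negative simple function on sets of finite measure, so it lies in the domain, and by linearity $Tf$ is the corresponding combination of the $Tf_j$. Since $0\leqslant f_j\leqslant |f|$ pointwise, we have $m_{f_j}\leqslant m_f$, hence $\|f_j\|_{L_{p,1}}\leqslant\|f\|_{L_{p,1}}$. The triangle inequality in $B$ then gives $\|Tf\|_B\leqslant 4\max_j\|Tf_j\|_B$, which costs only a factor of $4$.

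Next, for non-negative simple $f$ with distinct positive values $a_1>a_2>\dots>a_n>0$, I set $a_{n+1}=0$ and $E_k=\{f\geqslant a_k\}$. The sets $E_k$ are nested, increasing in $k$, and have finite measure. Then
\begin{equation*}
f=\sum_{k=1}^{n}(a_k-a_{k+1})\chi_{E_k},
\end{equation*}
with all coefficients positive. By linearity of $T$, the triangle inequality in $B$ and the hypothesis,
\begin{equation*}
\|Tf\|_B\leqslant\sum_{k}(a_k-a_{k+1})\|T\chi_{E_k}\|_B\lesssim\sum_k (a_k-a_{k+1})\,\mu(E_k)^{1/p}.
\end{equation*}
The distribution function satisfies $m_f(t)=\mu(E_k)$ for $t\in(a_{k+1},a_k]$. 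Hence the last sum equals $\int_0^\infty m_f(t)^{1/p}\,dt$, which is exactly $\|f\|_{L_{p,1}}$ by the definition with $q=1$: $\|f\|_{L_{p,1}}=\int_0^\infty (m_f(t)t^p)^{1/p}\,dt/t$. The same computation with $f=\chi_E$ confirms the identity $\|\chi_E\|_{L_{p,1}}=\mu(E)^{1/p}=\|\chi_E\|_{L_p}$ used in the hypothesis.

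No step here is a real obstacle. The only point needing care is that one must decompose $f$ into nested level sets, not arbitrary disjoint pieces. With nested sets the sum $\sum(a_k-a_{k+1})\mu(E_k)^{1/p}$ reproduces the $L_{p,1}$ quasi-norm exactly. Decomposing over the disjoint sets $\{f=a_k\}$ would instead give $\sum a_k\mu(\{f=a_k\})^{1/p}$, which can be much larger because $t\mapsto t^{1/p}$ is concave.
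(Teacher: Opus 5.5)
Your proof is correct and is essentially the standard Stein--Weiss argument that the paper cites rather than reproves. It writes a non-negative simple $f$ over nested level sets, uses the identity $\|f\|_{L_{p,1}}=\int_0^\infty m_f(t)^{1/p}\,dt=\sum_k(a_k-a_{k+1})\mu(E_k)^{1/p}$, and splits a general $f$ into four non-negative parts; as the paper's remark says, it uses only linearity of $T$ and the triangle inequality in $B$, not any order structure on $B$.
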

	\begin{Rem}
		In the fact in~\cite{SteinWeiss} this theorem is formulated for Banach spaces with an order preserving norm, but the proof never uses the additional structure on the Banach space $B$. 
	\end{Rem}
	We will use real interpolation between quasi-Banach spaces (see~\cite{BerghLofstrom} for basics of interpolation theory). 
	
	\begin{Def}
		Let $A_0$ and $A_1$ be two topological vector spaces. Then we will say that $A_0$ and $A_1$ are compatible if there is a Hausdorff topological vector space $\mathcal{A}$  such that $A_0$ and $A_1$ are subspaces of $\mathcal{A}$.
	\end{Def}
	\begin{Def}
		Let $A_0$ and $A_1$ be two compatible quasi-Banach spaces. Define the $K$-functional
		\begin{equation}
			K(\,\cdot\,,\,\cdot\,,A_0,A_1)\colon\mathbb{R}_+\times(A_0+A_1)\rightarrow \mathbb{R}_+
		\end{equation} 
		by the formula
		\begin{equation}
			K(t,a,A_0,A_1)=\inf\{\|a_0\|_{A_0}+t\|a_1\|_{A_1}| a_0\in A_0, a_1\in A_1\text{ such that }a=a_0+a_1\}.
		\end{equation}
	\end{Def}
	\begin{Def}
		Let $A_0$ and $A_1$ be two compatible quasi-Banach spaces. Let $0<\theta<1$ and $q>0$. The real interpolation space $(A_0,A_1)_{\theta,q}$ is defined by the quasi-norm
		\begin{equation}
		\|a\|_{(A_0,A_1)_{\theta,q}}
		=
		\Big(\int\limits_{0}^{\infty}(t^{-\theta}K(t,a,A_0,A_1))^q\frac{dt}{t}\Big)^{\frac{1}{q}}.
		\end{equation}
	\end{Def}
	In the next proposition, we collect the properties of real interpolation spaces we will use in our paper.
	\begin{St}\label{inter}
		Let $A_0$ and $A_1$ be two compatible quasi-Banach spaces and 	let $X_0$ and $X_1$ also be two compatible quasi-Banach spaces. 
		\begin{enumerate}[1.]
			\item\label{I1}~\cite[Theorem~3.11.4]{BerghLofstrom}. $\|a\|_{(A_0,A_1)_{\theta,q}}\lesssim\|a\|_{A_0}^{1-\theta}\|a\|_{A_1}^{\theta}$.
			\item\label{I2}~\cite[Theorem~3.11.2]{BerghLofstrom}.
			Let the linear operator $T\colon A_0+A_1\rightarrow X_0+X_1$ be continuous from $A_0$ to $X_0$ and from $A_1$ to $X_1$. Then $T$ is continuous from $(A_0,A_1)_{\theta,q}$ to $(X_0,X_1)_{\theta,q}$.
			\item\label{I3}~\cite[Theorem~3.11.5]{BerghLofstrom}.
			Let $\theta=(1-\eta)\theta_0+\eta\theta_1$, then 
			\begin{equation}
				\big((A_0,A_1)_{\theta_0,q_0},(A_0,A_1)_{\theta_1,q_1}\big)_{\eta,q}=(A_0,A_1)_{\theta,q}.
			\end{equation}
			\item\label{I4}~\cite[Theorem~5.3.1]{BerghLofstrom}.
			Let $\frac{1}{p}=\frac{1-\theta}{p_0}+\frac{\theta}{p_1}$, then $(L_{p_0,q_0},L_{p_1,q_1})_{\theta,q}=L_{p,q}$.
		\end{enumerate}
	\end{St}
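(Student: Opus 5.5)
The statement collects four standard facts from~\cite{BerghLofstrom}, so the plan is to verify each directly from the definition of the $K$-functional. The only quasi-Banach feature to watch is that quasi-triangle constants may appear, and these affect only the implicit constants.

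For item~\ref{I1}, I would use the trivial splittings $a=a+0$ and $a=0+a$ to get $K(t,a)\leqslant\min(\|a\|_{A_0},t\|a\|_{A_1})$. Then I would split $\int_0^\infty$ at $t_0=\|a\|_{A_0}/\|a\|_{A_1}$. On $(0,t_0)$ I bound by $t\|a\|_{A_1}$, and on $(t_0,\infty)$ by $\|a\|_{A_0}$. Each piece is a power integral that converges because $0<\theta<1$, and each gives $\|a\|_{A_0}^{1-\theta}\|a\|_{A_1}^{\theta}$ up to a constant depending on $\theta$ and $q$.

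For item~\ref{I2}, let $M_j$ be the norm of $T\colon A_j\to X_j$. For any decomposition $a=a_0+a_1$ we have $Ta=Ta_0+Ta_1$, which gives $K(t,Ta,X_0,X_1)\leqslant M_0K(tM_1/M_0,a,A_0,A_1)$. The substitution $s=tM_1/M_0$ in the defining integral then yields $\|Ta\|_{\theta,q}\leqslant M_0^{1-\theta}M_1^{\theta}\|a\|_{\theta,q}$.

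Item~\ref{I3} is the reiteration theorem, and I expect it to be the main obstacle. Set $E_j=(A_0,A_1)_{\theta_j,q_j}$. The plan is to prove the Holmstedt-type two-sided estimate
\begin{equation*}
K(t^{\theta_1-\theta_0},a,E_0,E_1)\asymp\Big(\int_0^t(s^{-\theta_0}K(s,a))^{q_0}\tfrac{ds}{s}\Big)^{\frac{1}{q_0}}+t^{\theta_1-\theta_0}\Big(\int_t^\infty(s^{-\theta_1}K(s,a))^{q_1}\tfrac{ds}{s}\Big)^{\frac{1}{q_1}}.
\end{equation*}
\begin{itemize}
\item For the upper bound, I take a near-optimal decomposition $a=a_0(s)+a_1(s)$ for the $K$-functional at $s=t$. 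I then estimate $\|a_0(t)\|_{E_0}$ and $\|a_1(t)\|_{E_1}$ using the concavity of $K(\cdot,a)$, namely that $K(s,a)/s$ decreases and $K(s,a)$ increases.
\item For the lower bound, I use that $E_0$ and $E_1$ are of class $\theta_0$ and $\theta_1$, i.e.\ $K(s,b)\lesssim s^{\theta_j}\|b\|_{E_j}$.
\end{itemize}
Once this estimate holds, substituting it into $\int(u^{-\eta}K(u,a,E_0,E_1))^q\,du/u$ with $u=t^{\theta_1-\theta_0}$ and applying Hardy's inequality to both terms gives equivalence with the $(\theta,q)$ norm. This uses $\theta_0\neq\theta_1$ and $0<\eta<1$. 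In the quasi-Banach case ($q<1$) Hardy's inequality has to be replaced by its discrete dyadic version, which relies on the monotonicity of $K$.

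For item~\ref{I4}, I would first compute $K(t,f,L_{r},L_\infty)\asymp\big(\int_0^{t^r}f^*(s)^r\,ds\big)^{1/r}$ for small $r$, by cutting $f$ at height $f^*(t^r)$. From this, $(L_r,L_\infty)_{\theta,q}=L_{p,q}$ with $\frac1p=\frac{1-\theta}{r}$ follows by Hardy's inequality together with monotonicity of $f^*$. The general case $(L_{p_0,q_0},L_{p_1,q_1})_{\theta,q}$ then follows from item~\ref{I3} applied to the couple $(L_r,L_\infty)$ with $r<\min(p_0,p_1)$. If some $p_j=\infty$ is intended, it is handled by the same formula.
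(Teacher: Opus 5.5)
Your outline is correct. Note that the paper gives no proof of this proposition: each item is quoted from Bergh--L\"ofstr\"om (Theorems 3.11.4, 3.11.2, 3.11.5, 5.3.1), so your answer is a self-contained reconstruction rather than a variant of an argument in the paper.

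\textbf{Items 1, 2 and 4.} Items 1 and 2 are the standard $K$-functional computations. Item 4 follows the textbook route:
\begin{itemize}
\item the truncation formula $K(t,f;L_r,L_\infty)\asymp(\int_0^{t^r}f^{*}(s)^r\,ds)^{1/r}$;
\item the identification $(L_r,L_\infty)_{\theta_j,q_j}=L_{p_j,q_j}$ with $\theta_j=1-r/p_j$;
\item then reiteration.
\end{itemize}

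\textbf{Item 3.} Here your route genuinely differs from the cited one.
\begin{itemize}
\item Bergh--L\"ofstr\"om derive reiteration from the fact that $(A_0,A_1)_{\theta_j,q_j}$ is of class $\theta_j$ in both the $K$-sense and the $J$-sense. The $K$-class gives $(E_0,E_1)_{\eta,q}\subset(A_0,A_1)_{\theta,q}$, and the reverse inclusion goes through the $J$-method and the equivalence theorem. That argument works for arbitrary intermediate spaces of the right class.
\item Your route through Holmstedt's formula uses the concrete $K$-form of $\|\cdot\|_{E_j}$ in the upper estimate, so it is tied to the case actually stated. In exchange it avoids the $J$-method entirely and produces the explicit $K$-functional of $(E_0,E_1)$ as a by-product.
\item Your sketches of both halves of that formula are right. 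For the upper half you split near-optimally at $s=t$ and use the monotonicity of $K(s)$ and $K(s)/s$. For the lower half you use $K(s,b)\lesssim s^{\theta_j}\|b\|_{E_j}$. Both require $0<\theta_j<1$, and $\theta_0<\theta_1$ after swapping the couple if necessary.
\end{itemize}

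\textbf{Points to tighten.}
\begin{itemize}
\item \emph{Where the dyadic Hardy inequality is needed.} The monotone (dyadic) form of Hardy's inequality is needed whenever the outer exponent is smaller than the inner one, not only when $q<1$. That means $q<q_0$ or $q<q_1$ in item 3, and $q<r$ in item 4; for example $q_0=\infty$, $q=2$. In that range the continuous Hardy inequality is false for general functions (a spike is a counterexample). Your discretization, based on monotonicity of $K$ (respectively $f^*$), covers it.
\item \emph{Missing hypotheses.} The conditions $\theta_0\neq\theta_1$ in item 3 and $p_0\neq p_1$ in item 4 are missing from the statement as printed, and both are genuinely needed. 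Since $(X,X)_{\theta,q}=X$, one has $(L_p,L_p)_{\theta,q}=L_p\neq L_{p,q}$ for $q\neq p$. You flag the first condition; you should also flag the second, since your reduction $\theta_j=1-r/p_j$ relies on it. In the paper item 4 is only applied with $p_1>p>p_2$, so this is harmless there.
\end{itemize}
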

	
	\begin{Def}
		A Banach space $X$ is called a Banach space of distributions if $S(\mathbb{R}^d)\subset X\subset S'(\mathbb{R}^d)$ and these embeddings are continuous. Furthermore, $X$ is a standard Banach space of distributions if $S(\mathbb{R}^d)$ is dense in $X$.
	\end{Def}
	Here $S(\mathbb{R}^d)$ is the Schwartz class and $S'(\mathbb{R}^d)$ is the class of tempered distributions.
	
	\begin{Rem}\label{dualsp}
		If $X$ is a standard Banach space of distributions, then $S(\mathbb{R}^d)\hookrightarrow X'\hookrightarrow S'(\mathbb{R}^d)$ continuously, i.e. $X'$ is a Banach space of distributions. We mean that $X$ and $X'$ are Banach spaces and subspaces of $S'(\mathbb{R}^d)$. The action of $x'\in X'$ as a linear functional on the space $X$ is consistent with the action of $x'\in S'(\mathbb{R}^d)$ as a linear functional on the space $S(\mathbb{R}^d)$.
	\end{Rem}

	We will also need Besov and Sobolev spaces. Let $B_r(x)$ be the open ball with center $x$ and radius $r$. Fix a function $\varphi\in C_0^\infty(\mathbb{R}^d)$ such that $\supp\varphi\subset B_1(0)$ and $\varphi(x)=1$ if $|x|\leqslant \frac{1}{2}$. Let $\varphi_n(x)=\varphi(\frac{x}{2^n})-\varphi(\frac{x}{2^{n-1}})$, $\psi=\hat{\varphi}$, $\psi_n=\hat{\varphi}_n$.
	\begin{Def}
		Let $s\in\mathbb{R}$, $1\leqslant p\leqslant\infty$ and $0<q<\infty$. Then we define Besov spaces by the formulas
		\begin{equation}
			B_{p,q}^s(\mathbb{R}^d)
			=
			\Big\{f\Big|\  f\in S'(\mathbb{R}^d)\text{ and } \|\psi*f\|_{L_p}+\big(\sum\limits_{n=0}^{\infty}(2^{ns}\|\psi_n*f\|_{L_p})^q\big)^{\frac{1}{q}}<\infty\Big\}
			,
		\end{equation}
		\begin{equation}
			B_{p,\infty}^s(\mathbb{R}^d)
			=
			\Big\{f\Big|\  f\in S'(\mathbb{R}^d)\text{ and } \|\psi*f\|_{L_p}+\sup\limits_{n\geqslant0}\big(2^{ns}\|\psi_n*f\|_{L_p})<\infty\Big\},
		\end{equation}
		\begin{equation}\label{close}
			\overset{\!\circ}{B^s}_{\!\!\!\!\!\; p,\infty}(\mathbb{R}^d)
			=
			clos_{B_{p,\infty}^s}(S(\mathbb{R}^d)).
		\end{equation}
	\end{Def}
	The formula~\eqref{close} means that $\overset{\!\circ}{B^s}_{\!\!\!\!\!\; p,\infty}(\mathbb{R}^d)$ is the minimal closed subset of $B_{p,\infty}^s$ that contains $S(\mathbb{R}^d)$.
	More properties of Besov spaces may be found in the books~\cite{BesIlNik},~\cite{Peet} and~\cite{Triebel}. 
	
	We use the notation $(1-\Delta)^{\beta/2}$ for 
	\begin{equation}
		((1-\Delta)^{\beta/2}f)^{\widehat{}}(\xi)=\hat{f}(\xi)(1+|2\pi \xi|^2)^{\beta/2}.
	\end{equation}
	\begin{Def}
		Let $s\in\mathbb{R}$ and $1\leqslant p\leqslant\infty$. Then we define the potential Sobolev spaces by the formula
		\begin{equation}
			W^{s}_p(\mathbb{R}^d)=\{f| \  (1-\Delta)^{\frac{s}{2}}f\in L_p(\mathbb{R}^d)\}.
		\end{equation}		
	\end{Def}

	In the next proposition, we collect the properties of Besov spaces we need in our work.
	\begin{St}\label{BesPr}
		Besov spaces satisfy the following statements.
		\begin{enumerate}[1.]
			\item\label{B1}~\cite[Theorem 12, p.~74]{Peet} and~\cite[paragraph~2.11.2 Remark~2, p.~180]{Triebel}.
			Let $1\leqslant p<\infty$ and $1\leqslant q<\infty$. Then, 
			\begin{equation}\label{B1.1}
				(B^{s}_{p,q})'=B^{-s}_{p',q'},
			\end{equation}
			
			\begin{equation}\label{B1.2}
				\left(\overset{\!\circ}{B^s}_{\!\!\!\!\!\; p,\infty}\right)'=B^{-s}_{p',1}.
			\end{equation}
			\item\label{B2}~\cite[Theorem 1, p.~79]{Peet}.
			$B^s_{2,2}(\mathbb{R}^d)=W^s_2(\mathbb{R}^d)$.
			
			\item\label{B3}~\cite[Theorem 7, p.~64]{Peet}.
			Let $0<\theta<1$, $q>0$, $1< p<\infty$ and $s=(1-\theta)s_0+\theta s_1$. Then,
			\begin{equation}\label{intS}
				(W_p^{s_0},W_p^{s_1})_{\theta,q}=B^s_{p,q}.
			\end{equation} 
			\item\label{B4}This item follows from \cite[Theorem 4, p.~199]{Peet} and the previous items.
			If $2<p<\infty$ and $q>0$, then 
			\begin{equation}
				\mathcal{F}\colon L_{p,q}(\mathbb{R}^d)\rightarrow B^{\frac{d}{p}-\frac{d}{2}}_{2,q}(\mathbb{R}^d).
			\end{equation}
			Here $\mathcal{F}$ is the Fourier transform operator.
		\end{enumerate}
	\end{St}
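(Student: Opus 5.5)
Items~\ref{B1}--\ref{B3} are quoted from the literature, so the only thing to establish is item~\ref{B4}. The plan is to prove a diagonal estimate $\mathcal{F}\colon L_r\to B^{\frac{d}{r}-\frac{d}{2}}_{2,r}$ for every $2\leqslant r<\infty$, and then move from the diagonal $q=r$ to an arbitrary $q>0$ by real interpolation, using Proposition~\ref{inter} together with items~\ref{B2} and~\ref{B3}.

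\textbf{Step 1: the diagonal estimate.} This is the content of~\cite[Theorem 4, p.~199]{Peet}, but it can also be checked directly. Up to a reflection, $\psi_n*\hat f=\widehat{\varphi_n f}$, and Plancherel gives $\|\psi_n*\hat f\|_{L_2}=\|\varphi_n f\|_{L_2}$. The function $\varphi_n$ is supported in $\{|x|\asymp 2^n\}$ and is bounded uniformly in $n$. Hölder's inequality on this annulus, with exponent $\frac{1}{2}-\frac{1}{r}$, then yields
\begin{equation}
\|\varphi_n f\|_{L_2}\lesssim 2^{nd(\frac12-\frac1r)}\|f\chi_{\{|x|\asymp 2^n\}}\|_{L_r}.
\end{equation}
Multiply by $2^{n(\frac{d}{r}-\frac{d}{2})}$, raise to the power $r$ and sum over $n$. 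The annuli have bounded overlap, so the sum is $\lesssim\|f\|^r_{L_r}$. The low-frequency term $\|\psi*\hat f\|_{L_2}=\|\varphi f\|_{L_2}\lesssim\|f\|_{L_r}$ is handled the same way. This proves $\mathcal{F}\colon L_r\to B^{\frac{d}{r}-\frac{d}{2}}_{2,r}$.

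\textbf{Step 2: interpolation.} Given $2<p<\infty$ and $q>0$, choose $2<p_0<p<p_1<\infty$ and $\theta\in(0,1)$ with $\frac1p=\frac{1-\theta}{p_0}+\frac{\theta}{p_1}$. Set $s_i=\frac{d}{p_i}-\frac d2$. Since $s$ depends affinely on $\frac1r$, we get $(1-\theta)s_0+\theta s_1=\frac dp-\frac d2$. By Step~1 and Proposition~\ref{inter}.\ref{I2}, $\mathcal{F}$ maps $(L_{p_0},L_{p_1})_{\theta,q}$ into $(B^{s_0}_{2,p_0},B^{s_1}_{2,p_1})_{\theta,q}$. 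By Proposition~\ref{inter}.\ref{I4}, the domain space equals $L_{p,q}$.

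\textbf{Step 3: identifying the target.} This is the main point to check. Fix $a<s_1<s_0<b$. By items~\ref{B2} and~\ref{B3} (with $p=2$), $B^{s_i}_{2,p_i}=(W^a_2,W^b_2)_{\theta_i,p_i}$, where $s_i=(1-\theta_i)a+\theta_i b$. The reiteration theorem, Proposition~\ref{inter}.\ref{I3} with $\eta=\theta$, then gives
\begin{equation}
(B^{s_0}_{2,p_0},B^{s_1}_{2,p_1})_{\theta,q}=(W^a_2,W^b_2)_{(1-\theta)\theta_0+\theta\theta_1,\,q}=B^{(1-\theta)s_0+\theta s_1}_{2,q}=B^{\frac dp-\frac d2}_{2,q}.
\end{equation}
The middle exponent is computed from the linear relation between $\theta_i$ and $s_i$. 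One must also ensure the spaces are compatible, e.g.\ all continuously embedded in $S'(\mathbb{R}^d)$, and that reiteration remains valid in the quasi-Banach range $q<1$. Both are covered by the cited results in~\cite{BerghLofstrom}. This completes item~\ref{B4}.
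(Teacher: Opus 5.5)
Your proposal is correct and follows the route the paper indicates. The paper only says item~4 ``follows from Peetre's Theorem~4 and the previous items'': a base estimate for $\mathcal{F}$ taken from Peetre, which you instead verify directly as $\mathcal{F}\colon L_r\to B^{\frac{d}{r}-\frac{d}{2}}_{2,r}$ via Plancherel and H\"older on dyadic annuli. It then passes to general $q>0$ by real interpolation, $(L_{p_0},L_{p_1})_{\theta,q}=L_{p,q}$, and reiteration through $(W^a_2,W^b_2)_{\theta,q}=B^s_{2,q}$, exactly as your Steps~2--3 do; incidentally, item~3 with $p=2$ already gives $B^{s_i}_{2,p_i}=(W^a_2,W^b_2)_{\theta_i,p_i}$, so item~2 is not needed there.
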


		We give four definitions of capacities. Let $X$ be a real Banach space of distributions and let $A\subset\mathbb{R}^d$ be a compact set. We introduce the closed convex sets
		\begin{equation}
			X_A^+=clos_X\{g\in L_{1\ \!\! loc}\cap X| \ \ g\geqslant1 \text{ in some neighborhood of } A\},
		\end{equation}
		\begin{equation}
			X_A=clos_X\{g\in L_{1\ \!\! loc}\cap X| \ \ g=1 \text{ in some neighborhood of } A\}.
		\end{equation}
		For complex Banach spaces of distributions the definition of $X_A$ is the same, and 
		\begin{equation}
			X_A^+=clos_X\{g\in L_{1\ \!\! loc}\cap X| \ \ \Re(g)\geqslant1 \text{ in some neighborhood of } A\}.
		\end{equation}
		\begin{Rem}\label{nonemp}
			Let $X$ be a Banach space such that $S(\mathbb{R}^d)\subset X$. Then $X_A^+\neq\emptyset$ and $X_A\neq\emptyset$.
		\end{Rem}
		\begin{Def}
			Let $X$ be a Banach space of distributions, and let $A\subset\mathbb{R}^d$ be a compact set. Define
			\begin{equation}
				cap(A,X)=\inf\{\|g\|_X| \ g\in X^+_A\},
			\end{equation}
			\begin{equation}
				Cap(A,X)=\inf\{\|g\|_X| \ g\in X_A\},
			\end{equation}
			\begin{equation}
				cap^*(A,X)=\sup\{\|\mu\|^{-1}_X| \ \ \mu \text{ is a positive measure such that } \supp\mu\subset A\text{ and }\mu(A)=1\},
			\end{equation}
			\begin{equation}
				Cap^*(A,X)=\sup\{\|\zeta\|^{-1}_X| \ \ \zeta \text{ is a distribution such that } \supp\zeta\subset A\text{ and }<\zeta,1>=1\}.
			\end{equation}
		\end{Def}
		
		We also need the following lemma (see~\cite[Proposition 1.2]{Net}).
		\begin{Le}\label{Netrus}
			Let $1\leqslant p<\infty$, $0<q<\infty$ and $s>0$. Then, 
			\begin{equation}
				cap(A,B^{s}_{p,q})\asymp Cap(A, B^{s}_{p,q}).
			\end{equation}
		\end{Le}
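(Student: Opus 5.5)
The inequality $cap(A,B^{s}_{p,q})\leqslant Cap(A,B^{s}_{p,q})$ is immediate: $X_A\subset X_A^+$, because a function equal to $1$ near $A$ has real part $\geqslant1$ there. So everything is in the reverse bound $Cap\lesssim cap$. I would prove it by a \emph{smooth truncation} argument in the spirit of Maz'ya and Adams--Hedberg.

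The plan is to take a near-optimal $g\in X_A^+$ and compose it with a fixed function that is $\equiv1$ above level $1$. First, by density and the definition of $X_A^+$, it suffices to consider $g\in S(\mathbb{R}^d)$ (or a smooth function in $B^s_{p,q}$) with $\Re g\geqslant1$ near $A$. Replacing $g$ by $\Re g$ does not increase the norm up to a constant, since conjugation is an isometry of $B^s_{p,q}$, so we may take $g$ real. Fix $F\in C^\infty(\mathbb{R})$ with $F(t)=0$ in a neighbourhood of $0$ (or at least vanishing to high order at $0$) and $F(t)=1$ for $t\geqslant1$, with all derivatives bounded. Then $h=F\circ g$ equals $1$ near $A$. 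It remains to show
\begin{equation*}
\|F\circ g\|_{B^s_{p,q}}\lesssim \|g\|_{B^s_{p,q}}
\end{equation*}
with a constant independent of $g$; this gives $h\in X_A$ with $\|h\|\lesssim\|g\|$.

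For $0<s<1$ I would use the difference characterisation of $B^s_{p,q}$ (the modulus $\|\Delta_y g\|_{L_p}$ integrated against $|y|^{-sq-d}\,dy$). Since $F$ is Lipschitz with $F(0)=0$, we have $|\Delta_yF(g)|\leqslant\|F'\|_\infty|\Delta_y g|$ and $|F(g)|\leqslant C|g|$, which is exactly the required estimate.

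For general $s>0$ I would use higher-order differences or derivatives together with the chain rule. By Fa\`a di Bruno, $D^kF(g)$ is a sum of terms $F^{(m)}(g)\,D^{j_1}g\cdots D^{j_m}g$ with $j_1+\dots+j_m=k$. Since $F^{(m)}$ is bounded for every $m$, each such term is controlled by Gagliardo--Nirenberg interpolation, $\|D^{j}g\|_{L_{pk/j}}\lesssim\|g\|_{L_\infty}^{1-j/k}\|D^kg\|_{L_p}^{j/k}$. This interpolation needs $g$ bounded. One could first truncate $g$ to $[-2,2]$ by an operation that also preserves the norm, but that is circular for large $s$.

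The main obstacle is therefore removing the dependence on $\|g\|_\infty$ when $s\geqslant1$. I would try the Adams--Hedberg device: first reduce to $g=G_s*f$ with $f\geqslant0$, where $G_s$ is a Bessel kernel (for Besov spaces, realised via interpolation between potential spaces as in Proposition~\ref{BesPr}.\ref{B3}). Positivity of the kernel then gives the pointwise estimates $|D^jg|\lesssim (M f)^{\cdot}\,g^{1-j/k}$-type bounds via maximal functions. The key point is that only the region where $g\leqslant1$ matters, because $F'$ vanishes where $g>1$. On that region $g$ is bounded by $1$, so the Gagliardo--Nirenberg step goes through and yields the estimate uniformly. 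If the reduction to positive potentials is not available directly for $B^s_{p,q}$, I would instead prove the corresponding estimate for $W^{s_0}_p$ and $W^{s_1}_p$. Since $F\circ\cdot$ is nonlinear, plain interpolation of operators does not apply, so I would then pass to $B^s_{p,q}$ by a Littlewood--Paley argument applied to the same pointwise bounds.
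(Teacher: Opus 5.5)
The paper gives no proof of this lemma; it is quoted from Netrusov's Proposition~1.2. Your inequality $cap\le Cap$ is correct. For $0<s<1$, your first-difference argument with $F\circ\Re g$ is a complete proof of the reverse bound.

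The gap is the reduction ``it remains to show $\|F\circ g\|_{B^s_{p,q}}\lesssim\|g\|_{B^s_{p,q}}$ with a constant independent of $g$''. For $s>1+\frac1p$ this inequality is false, even for real $g\in C_0^\infty$. Take a real $g_0\in C_0^\infty(\mathbb{R}^d)$ with $g_0(x)=x_1$ on $B_1(0)$, and put $g_a=ag_0$. On $B_1(0)$ the function $F\circ g_a(x)=F(ax_1)$ passes from $0$ to $1$ across a slab of width $\asymp 1/a$. Hence $\|F\circ g_a\|_{B^s_{p,q}}\gtrsim a^{s-1/p}$, while $\|g_a\|_{B^s_{p,q}}=a\|g_0\|_{B^s_{p,q}}$. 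So composing a near-optimal $g$ with a fixed cut-off cannot work for large $s$; $g$ must first be replaced by a better competitor.

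Your remedy, that only the region $\{g\le 1\}$ matters and $g$ is bounded there, does not help by itself. In the example, $0\le g_a\le 1$ on the slab while $|\nabla g_a|=a$. What drives the Maz'ya--Adams--Hedberg smooth truncation is positivity. For $f\ge0$ one has pointwise bounds of the form $|D^j(G_s*f)|\lesssim (G_s*f)^{1-j/s}(Mf)^{j/s}$, $0<j<s$. These are combined with the $L_p$-boundedness of $M$ and of $f\mapsto D^m(G_s*f)$, $m\le s$, for $1<p<\infty$.

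For $W^s_p$ the passage to positive potentials is free ($f\mapsto |f|$). For $B^s_{p,q}$ you do not supply it. Interpolation between $W^{s_0}_p$ and $W^{s_1}_p$ produces no positive representative, and, as you note yourself, it cannot transport a nonlinear estimate. A positive majorant can in fact be obtained: write $g=G_{s-\sigma}*f$ with $0<\sigma<1$ and $f\in B^\sigma_{p,q}$, then replace $f$ by $|f|$, which does not increase the first-difference norm. You would then still need a Besov version of smooth truncation for such potentials, an estimate of the type $\|\Phi(G_{s-\sigma}*f)\|_{B^s_{p,q}}\lesssim\|f\|_{B^\sigma_{p,q}}$ for $f\ge 0$.

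Your closing sentence (``a Littlewood--Paley argument applied to the same pointwise bounds'') is a placeholder for exactly this nonlinear estimate. That estimate is the real content of the lemma. It does not follow routinely from pointwise derivative bounds, because Besov norms of fractional order are nonlocal. Finally, the endpoint $p=1$, which the statement includes, is beyond the maximal-function and singular-integral tools altogether. As written, the proposal establishes the lemma only for $0<s<1$.
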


		\begin{Def}\label{regyl}
			A continuous non-decreasing function~$g\colon\mathbb{R}_+\rightarrow \mathbb{R}_+$ is called regular provided~$g(0)=0$, and for some fixed~$c > 1$ we have~$g(x)\asymp g(cx)$.
		\end{Def}
		
		We recall the definitions of $f$-Hausdorff measures, see~\cite[Chapter~4, p~59]{Mattila} for more information about $f$-Hausdorff measures.
		\begin{Def}
			The $f$-Hausdorff measure is defined by the formula  
			\begin{equation}
				\Lambda_f(A)
				=
				\lim\limits_{\delta\rightarrow 0} \inf\Big\{\sum\limits_{j} f(\diam B_j)\Big|\ A\subset\cup B_j,\ \diam B_j<\delta\Big\}.
			\end{equation}
		\end{Def}
		Note that if $f(t)=t^{\alpha}$, then $\Lambda_f=\mathcal{H}_{\alpha}$. The	$f$-Hausdorff measures provide a way to expand the scale of the classical Hausdorff measures.	Items~\ref{I8} and~\ref{I9} of Proposition~\ref{HLp} show how this expansion is related to Netrusov--Hausdorff capacities.
		We recall definitions of the $\alpha$-packing measures, see~\cite[Chapter~5, p~81]{Mattila} for more information about packing measures. A disjoint collection of open balls with centers in $A$ is called an $A$ centered ball packing (CBP).
		\begin{Def}
			The $\alpha$-packing pre-measure $\widetilde{\mathcal{P}}_\alpha$ is defined by the formula
			\begin{equation}
				\widetilde{\mathcal{P}}_\alpha(F)
				=
				\lim\limits_{\delta\rightarrow 0} \sup\Big\{\sum\limits_j (\diam B_j)^{\alpha}\Big|\ \{B_j\}\text{ is } F \text{ CBP},\ \diam B_j<\delta\Big\}.
			\end{equation}
		\end{Def}
		\begin{Def}
			The $\alpha$-packing measure $\mathcal{P}_\alpha$ is defined by the formula
			\begin{equation}
				\mathcal{P}_\alpha(F)
				=
				\underset{F\subset\cup B_j}{\inf}\sum\limits_{j}\widetilde{\mathcal{P}}_\alpha(B_j).
			\end{equation}
		\end{Def}
		\begin{Rem}\label{yp}
			Let $N_\varepsilon(A)$ be the number of points in a maximal $\varepsilon$-separated subset of set $A$. Then $N_\varepsilon(A)\lesssim \widetilde{\mathcal{P}}_\alpha(A)\varepsilon^{-\alpha}$.
		\end{Rem}
		We also recall the definition of the lower Hausdorff dimension of a measure, see~\cite[Chapter 10]{Falconer} for more information about Hausdorff dimension of a measure.
		\begin{Def}
			Let $\mu$ be a Borel measure in $\mathbb{R}^d$. Then the lower Hausdorff dimension of $\mu$ is defined by the formula
			\begin{equation}
				\underline{\dim}_{\mathcal{H}}(\mu)
				=
				\inf\{\alpha|\ \text{There exists a borel } B\subset\mathbb{R}^d\text{ such that }\dim_{\mathcal{H}}(B)=\alpha\text{ and }\mu(B)\neq0\}.
			\end{equation}
		\end{Def}
	\section{About capacities}\label{caps}
	
	The Ky Fan Minimax Theorem implies the equivalence of the dual definitions of capacities. See~\cite[Section 2.5]{AdHe} for duality between definitions of capacities and in particular, this deduction. We repeat similar reasoning in a general situation.
	\begin{St}\label{dual}
		Let $X$ be a real standard Banach space of distributions. Let $A\subset\mathbb{R}^d$ be a compact set. Then
		\begin{equation}\label{cap}
			cap(A,X)=cap^*(A,X'),
		\end{equation}
		\begin{equation}\label{Cap}
			Cap(A,X)=Cap^*(A,X').
		\end{equation}
	\end{St}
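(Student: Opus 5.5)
We prove both identities in the same way: we identify each capacity with the distance in $X$ from the origin to a closed convex set, and compute that distance by duality.

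\textbf{Step 1: the easy inequality $cap^*(A,X')\leqslant cap(A,X)$.} Fix a positive measure $\mu$ with $\supp\mu\subset A$, $\mu(A)=1$ and $\mu\in X'$, and a function $g\in X$ with $g\geqslant1$ near $A$. We claim
\begin{equation*}
\langle\mu,g\rangle=\int g\,d\mu\geqslant1 .
\end{equation*}
By density this extends to all $g\in X_A^+$, since $\mu\in X'$ acts continuously on $X$. To justify the integral formula, regularize: take $g_\eps=g*\rho_\eps$ (or replace $g$ by a Schwartz approximation) and use that $\mu$ has compact support. We then get
\begin{equation*}
1\leqslant\langle\mu,g\rangle\leqslant\|\mu\|_{X'}\|g\|_X .
\end{equation*}
Taking the infimum over $g$ and the supremum over $\mu$ gives the inequality. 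The same argument works for $Cap$. There $g=1$ near $A$, so $\langle\zeta,g\rangle=\langle\zeta,1\rangle=1$ for every distribution $\zeta$ supported in $A$, because $\zeta$ only sees $g$ near its support.

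\textbf{Step 2: the reverse inequality via Ky Fan (Theorem~\ref{KyF}).} Since $X^+_A$ is closed and convex, Hahn--Banach gives
\begin{equation*}
cap(A,X)=\operatorname{dist}(0,X_A^+)=\sup\Big\{\inf_{g\in X_A^+}\langle \ell,g\rangle\ \Big|\ \ell\in X',\ \|\ell\|_{X'}\leqslant1\Big\}.
\end{equation*}
A sharper route, in the spirit of \cite[Section 2.5]{AdHe}, is to apply Theorem~\ref{KyF} with $X$ the set of probability measures on $A$ (compact in the weak-$*$ topology) and $Y$ the set of $g\in S(\mathbb{R}^d)$ with $g\geqslant1$ near $A$, normalized by $\|g\|_X\leqslant1$.

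The key point is to show that an optimal separating functional $\ell$ must be a positive measure supported on $A$. First, $\ell$ is supported in $A$: if $\varphi\in S$ has support disjoint from $A$, then $g+t\varphi\in X_A^+$ for every real $t$. Boundedness from below of $\inf\langle\ell,g\rangle$ then forces $\langle\ell,\varphi\rangle=0$. Second, $\ell$ is positive: for $\varphi\geqslant0$ we have $g+t\varphi\in X_A^+$ for all $t>0$, so $\langle\ell,\varphi\rangle\geqslant0$. A positive distribution is a positive measure. Finally we normalize. Because $g\geqslant1$ on a neighbourhood of $A$ and $g$ can be taken arbitrarily close to $1$ there while staying in the closure, the infimum equals $\ell(A)$. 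Dividing by $\ell(A)$ produces a probability measure $\mu$ with
\begin{equation*}
\|\mu\|_{X'}^{-1}\geqslant cap(A,X).
\end{equation*}
For $Cap$ the argument is identical, except that positivity is dropped. The set $X_A$ is affine, so the supremum over $\ell$ can only be finite when $\ell$ is supported in $A$, and then $\langle\ell,g\rangle=\langle\ell,1\rangle$ for every $g\in X_A$. Normalizing $\langle\ell,1\rangle=1$ gives \eqref{Cap}.

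\textbf{Main obstacle.} The delicate step is justifying that $\inf_{g\in X_A^+}\langle\mu,g\rangle$ equals exactly $\mu(A)$. This requires test functions in $X$ that are $\geqslant1$ near $A$ and close to $1$ on $\supp\mu$, together with the consistency of the pairing described in Remark~\ref{dualsp}. For this reason we first work with Schwartz functions (cut-offs $\varphi(\cdot/\eps)$-type bumps equal to $1$ near $A$), using that $S$ is dense in $X$, and only then pass to closures. The infinite-dimensional Hahn--Banach/Ky Fan step itself is standard once this is in place.
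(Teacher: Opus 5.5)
Your proof is correct and is essentially the paper's argument. The paper also rewrites $cap(A,X)$ as $\sup_{\|\mu\|_{X'}\leqslant1}\inf_{g\in X_A^+}\langle g,\mu\rangle$; it obtains the inf–sup swap from Ky Fan, where your Hahn--Banach distance formula does the same job more directly. It then forces $\mu$ to be a positive measure supported on $A$ by the same perturbations $g+\lambda f$, invokes Schwartz's theorem, and normalizes by $\mu(A)$ (resp.\ $\langle\zeta,1\rangle$).

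Your two-inequality split has one small advantage: it shows where the delicate locality claim $\langle\mu,g\rangle\geqslant\mu(A)$ for non-smooth $g\in X_A^+$ is actually used. The paper asserts this claim without proof in \eqref{poia}. Your regularization remark does not settle it for an abstract $X$, since mollifiers need not converge in $X$ and Schwartz approximants need not stay $\geqslant1$ near $A$. The split shows the claim is needed only for the easy inequality, because the hard one uses just a single $C_0^\infty$ function equal to $1$ near $A$.
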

	\begin{proof}[Proof of Proposition~\ref{dual}.] 
		Let us prove~\eqref{cap}, the second case~\eqref{Cap} is similar. We employ the Hahn--Banach Theorem:
		\begin{equation}
			\inf\{\|g\|_X| \ \ g\in X_A^+\}=\inf\limits_{g\in X^+_A}\sup\limits_{\|\mu\|_{X'}\leqslant 1}<g,\mu>.
		\end{equation}
		Note that the sets $X_A^+$ and $\{\mu\in X'|\ \|\mu\|_{X'}\leqslant 1\}$ are convex, the latter set is compact with respect to the weak* topology, and the function $(g,\mu)\mapsto <g,\mu>$ is bilinear and continuous. Thus, by Theorem~\ref{KyF}, 
		\begin{equation}
			\inf\limits_{g\in X^+_A}\sup\limits_{\|\mu\|_{X'}\leqslant 1}<g,\mu>=\sup\limits_{\|\mu\|_{X'}\leqslant 1}	\inf\limits_{g\in X^+_A}<g,\mu>.
		\end{equation}
		Let us now fix $\mu$ for a while. Let $g=g_0+\lambda f$ where $g_0\in X_A^+$, $\lambda\in\mathbb{R}$ and $\supp f\cap A=\varnothing$. Therefore, either $\inf_{X_A^+}<g,\mu>=-\infty$ or $<f,\mu>=0$ for any such $f$. This means that $\mu$ is supported on $A$, provided $\inf_{X^+_A}<g,\mu>$ is finite. Now let us consider $f\in C_0^\infty$ that are non-negative and $\lambda\geqslant0$. For each such $f$, we either have $\inf_{X_A^+}<g,\mu>=-\infty$ or $<f,\mu>\geqslant0$. This means that $\mu$ is a positive distribution, therefore, by the Schwartz Theorem, $\mu$ is a non-negative measure. All these leads to the formula
		\begin{equation}
			cap(A,X)=\sup\{\inf\limits_{X^+_A}<g,\mu>|\ \|\mu\|_{X'}\leqslant1 \text{ and }\mu \text{ is a non negative measure supported on }A\}.
		\end{equation}
		Now, if $\mu\in X'$ is a positive measure supported on $A$, then
		\begin{equation}\label{poia}
			\inf\limits_{g\in X^+_A}<g,\mu>
			=
			\inf\{<g,\mu>|\ g\in L_{1\ \!\! loc}\cup X \text{ and } g\geqslant1 \text{ in some neighborhood of } A \}
			=
			\mu(A).
		\end{equation}
		The last equality in~\eqref{poia} follows from Remark~\ref{nonemp}.
		
		Thus,
		\begin{equation}
			\begin{aligned}
				cap(A,X)&=\sup\{\mu(A)|\ \|\mu\|_{X'}\leqslant1 \text{ and }\mu \text{ is a positive measure supported on }A\}\\
				&=
				\sup\{\|\mu\|_{X'}^{-1}|\ \mu(A)=1 \text{ and }\mu \text{ is a positive measure supported on }A\}\\
				&=
				cap^*(A,X').
			\end{aligned}
		\end{equation}
	\end{proof}
	
	\begin{Rem}
		For Besov spaces of complex-valued functions,  we have the relation
		\begin{equation}
			\|g\|_{B^s_{p,q}}\asymp\|\Re(g)\|_{B^s_{p,q}}+\|\Im(g)\|_{B^s_{p,q}}.
		\end{equation}
		So, the capacities in real Besov spaces are equivalent to the capacities in complex Besov spaces.
	\end{Rem}
	
	\begin{Le}\label{capz}
		Let $s>0$ and $1<p<\infty$, then
		\begin{equation}\label{zerocap}
			cap(\,\cdot\,,B^s_{p,\infty})
			\asymp
			cap(\,\cdot\,,\overset{\!\circ}{B^s}_{\!\!\!\!\!\; p,\infty}),
		\end{equation}
		\begin{equation}
			Cap(\,\cdot\,,B^s_{p,\infty})
			\asymp
			Cap(\,\cdot\,,\overset{\!\circ}{B^s}_{\!\!\!\!\!\; p,\infty}).
		\end{equation}
	\end{Le}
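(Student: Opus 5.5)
One inequality in each line is immediate. Since $\overset{\!\circ}{B^s}_{\!\!\!\!\!\; p,\infty}$ is a closed subspace of $B^s_{p,\infty}$ with the same norm, every admissible function in $\overset{\!\circ}{B^s}_{\!\!\!\!\!\; p,\infty}$ that is $\geqslant 1$ (resp. $=1$) near $A$ is also admissible for $B^s_{p,\infty}$. The closures only grow. Hence
\[
cap(A,B^s_{p,\infty})\leqslant cap(A,\overset{\!\circ}{B^s}_{\!\!\!\!\!\; p,\infty}),
\qquad
Cap(A,B^s_{p,\infty})\leqslant Cap(A,\overset{\!\circ}{B^s}_{\!\!\!\!\!\; p,\infty}).
\]

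For the reverse inequality, take $g\in L_{1\,loc}\cap B^s_{p,\infty}$ with $g\geqslant1$ (resp. $g=1$) on a neighborhood $U$ of $A$. We want to replace $g$ by an element of $\overset{\!\circ}{B^s}_{\!\!\!\!\!\; p,\infty}$ that has norm $\lesssim\|g\|_{B^s_{p,\infty}}$ and is still $\geqslant1$ (resp. $=1$) near $A$. Fix $\chi\in C_0^\infty$ with $\chi=1$ on a neighborhood of $A$ and $\supp\chi\subset U$. The key observation is that multiplication by $\chi$ is a bounded operator on $B^s_{p,\infty}$.

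Consider $h=\chi g+(1-\chi)$ in the $Cap$ case, or $h=\chi g+\chi'(1-\chi)$-type corrections in the $cap$ case; simplest is just to use $\chi g$. On the set where $\chi=1$ we have $\chi g=g$, so $\chi g$ keeps the required property near $A$, and $\|\chi g\|\lesssim\|g\|$.

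Since $\chi g$ is compactly supported, it remains to show $\chi g\in \overset{\!\circ}{B^s}_{\!\!\!\!\!\; p,\infty}$. This fails in general: compactly supported $B^s_{p,\infty}$ functions need not be limits of Schwartz functions in the $B^s_{p,\infty}$ norm. So the plan is to pass through a slightly smoother space. Take $g_\varepsilon=g*\rho_\varepsilon$, a mollification. For small $\varepsilon$ the function $g_\varepsilon$ is still $\geqslant1$ (resp. $=1$) on a smaller neighborhood of $A$, because $g$ satisfies this on $U$. Moreover $\|g_\varepsilon\|_{B^s_{p,\infty}}\leqslant\|\rho\|_{L_1}\|g\|_{B^s_{p,\infty}}$, since convolution with an $L_1$ kernel commutes with the Littlewood--Paley pieces $\psi_n*$ and is bounded on $L_p$. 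Then $\chi g_\varepsilon\in C_0^\infty\subset S$, so it lies in $\overset{\!\circ}{B^s}_{\!\!\!\!\!\; p,\infty}$. It satisfies the required condition near $A$ and has norm $\lesssim\|\chi\|_{mult}\|g\|$. Taking the infimum over $g$ in the dense generating set gives the reverse inequality, and passing to closures is harmless since the capacities are infima of norms.

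The main step to verify is the multiplier bound $\|\chi f\|_{B^s_{p,\infty}}\lesssim\|f\|_{B^s_{p,\infty}}$ for $\chi\in C_0^\infty$. This is standard: the Schwartz functions are pointwise multipliers on Besov spaces, which one can check via paraproduct estimates on $\psi_n*(\chi f)$, or via interpolation (Proposition~\ref{BesPr}, item~\ref{B3}) from the obvious boundedness on $W^{s_0}_p$ and $W^{s_1}_p$.

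We also need uniformity: the constant depends on $\chi$, hence on $A$, and so appears to be non-uniform. To avoid this, I would choose $\chi$ at a fixed scale relative to $U$. Better still, drop $\chi$ altogether and argue that $g_\varepsilon$ itself can be cut off by $\varphi(\cdot/R)$ with $R\to\infty$. This works because $\varphi(\cdot/R)$ has multiplier norms uniformly bounded in $R\geqslant1$ by dilation invariance of the symbol estimates, and it equals $1$ near the compact set $A$ for large $R$. Then $\varphi(\cdot/R)g_\varepsilon\in C_0^\infty$ and its norm is $\lesssim\|g\|$ with an absolute constant, which gives the claimed $\asymp$ with constants independent of $A$.
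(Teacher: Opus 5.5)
Your argument is correct and is essentially the paper's proof. The paper also builds a competitor $(g\varphi(\cdot/R))*\varphi_\varepsilon\in C_0^\infty$ that has norm $\lesssim\|g\|_{B^s_{p,\infty}}$ and is still $\geqslant1$ (resp.\ $=1$) near $A$; it cuts off before mollifying rather than after, which makes no difference. Your justification of the multiplier bound, uniform in $R\geqslant1$, by interpolating the Leibniz estimates on $L_p$ and $W^k_p$ fills in a step the paper only asserts, and the discarded $\chi$ and $h=\chi g+(1-\chi)$ detour can simply be deleted.
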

	\begin{proof}
		Let us prove~\eqref{zerocap}, the second case is similar. The embedding $\overset{\!\circ}{B^s}_{\!\!\!\!\!\; p,\infty}\hookrightarrow B^s_{p,\infty}$ yields
		\begin{equation}\label{ocenkacap}
			cap(\,\cdot\,,B^s_{p,\infty})\leqslant cap(\,\cdot\,,\overset{\!\circ}{B^s}_{\!\!\!\!\!\; p,\infty}).
		\end{equation}
		Let $g\in L_{1\ \!\! loc}\cup B^s_{p,\infty}$ be such that  $g\geqslant1$ in some neighborhood of  $A$.
		To prove the inequality reverse to~\eqref{ocenkacap}, we need a function $\varphi\in C_0^{\infty}(\mathbb{R}^d)$ such that $\varphi=1$ in the unit ball $B_1(0)$. Let $\varPhi_R(x)=\varphi(\frac{x}{R})$ and $\varphi_\varepsilon(x)=\frac{1}{C\varepsilon^d}\varphi(\frac{x}{\varepsilon})$ (here $C=\int_{\mathbb{R^d}} \varphi(x)dx$). Let $g\in B^{s}_{p,\infty}$ and $g\geqslant 1$ (or $g=1$) in some neighborhood of $S$. Then, for large $R$ and small $\varepsilon$ we have
		\begin{equation}
			\|(g\varPhi_R)*\varphi_\varepsilon\|_{B^s_{p,\infty}}\lesssim\|g\|_{B^s_{p,\infty}},
		\end{equation}
		moreover, $(g\varPhi_R)*\varphi_\varepsilon\in C_0^\infty(\mathbb{R}^d)$, and $(g\varPhi_R)*\varphi_\varepsilon\geqslant 1$ in some neighborhood of $S$. Therefore
		\begin{equation}
			cap(A,\overset{\!\circ}{B^s}_{\!\!\!\!\!\; p,\infty})\lesssim \|g\|_{B^s_{p,\infty}}.
		\end{equation}
		Passing to the infimum over all suitable $g$, we get 
		 \begin{equation}
		 	cap(A,\overset{\!\circ}{B^s}_{\!\!\!\!\!\; p,\infty})\lesssim cap(A,B^s_{p,\infty}).
		 \end{equation}
	\end{proof}

	\begin{Cor}\label{eqvi}
		Let $1<p<\infty$, $1\leqslant q\leqslant\infty$ and $0<s<\infty$. Let $S\subset\mathbb{R}^d$ be a compact set. Let there exist a distribution $\zeta$ such that $\zeta\neq0$, $\supp\zeta\subset S$ and $\zeta\in B^{-s}_{p,q}$. Then there exists a probability measure $\mu$ such that $\supp\mu\subset S$ and $\mu\in B^{-s}_{p,q}$.
	\end{Cor}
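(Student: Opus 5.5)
The plan is to reduce the statement to the duality of Proposition~\ref{dual}: first produce a normalized real distribution with $<\zeta,1>=1$, then move from $Cap$ to $cap$ on the predual side, and finally dualize back to get a positive measure.

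\emph{Step 1: normalization.} Since $\zeta\neq0$, there is a real-valued $\phi\in C_0^\infty(\mathbb{R}^d)$ with $<\zeta,\phi>\neq0$. If $\zeta$ is complex, replace it by $\Re\zeta$ or $\Im\zeta$ to make $<\zeta,\phi>$ real and nonzero; both lie in $B^{-s}_{p,q}$ by the remark on real and imaginary parts. Multiplication by a $C_0^\infty$ function is a bounded operator on $B^{-s}_{p,q}$ (a standard multiplier fact). Hence
\begin{equation*}
\zeta_1=\frac{\phi\,\zeta}{<\zeta,\phi>}
\end{equation*}
is a real distribution in $B^{-s}_{p,q}$ with $\supp\zeta_1\subset S$ and $<\zeta_1,1>=1$. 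Therefore $Cap^*(S,B^{-s}_{p,q})>0$.

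\emph{Step 2: duality to $Cap$.} Take a standard predual $X$ of $B^{-s}_{p,q}$ given by Proposition~\ref{BesPr}, item~\ref{B1}:
\begin{itemize}
\item for $1<q\leqslant\infty$, take $X=B^{s}_{p',q'}$ with $q'<\infty$;
\item for $q=1$, take $X=\overset{\!\circ}{B^s}_{\!\!\!\!\!\; p',\infty}$.
\end{itemize}
In both cases $X$ is a standard Banach space of distributions and $X'=B^{-s}_{p,q}$. Proposition~\ref{dual} gives $Cap(S,X)=Cap^*(S,X')>0$.

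\emph{Step 3: from $Cap$ to $cap$.} For $q'<\infty$, Lemma~\ref{Netrus} with $s>0$ and $1<p'<\infty$ gives $cap(S,X)\asymp Cap(S,X)>0$. For $q=1$, Lemma~\ref{capz} replaces $\overset{\!\circ}{B^s}_{\!\!\!\!\!\; p',\infty}$ by $B^{s}_{p',\infty}$ in both capacities, up to equivalence. It then remains to know $cap\asymp Cap$ for $B^{s}_{p',\infty}$, which Lemma~\ref{Netrus} as quoted does not cover.

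\emph{Step 4: back to a measure.} Once $cap(S,X)>0$, Proposition~\ref{dual} gives $cap^*(S,X')>0$. By the definition of $cap^*$ there is a positive measure $\mu$ with $\supp\mu\subset S$, $\mu(S)=1$ and $\|\mu\|_{X'}<\infty$, i.e. a probability measure in $B^{-s}_{p,q}$.

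The main obstacle is the endpoint $q'=\infty$ in Step 3. I would first try to rerun Netrusov's truncation argument for $B^s_{p',\infty}$. Take $g$ with $g=1$ near $S$; compose with a smooth function $F$ satisfying $F(t)=1$ for $t\geqslant 1/2$ and $F(t)=0$ for $t\leqslant0$, so that $F\circ g\geqslant$ a constant near the relevant set. The needed ingredient is that $F\circ g$ stays bounded in $B^s_{p',\infty}$ for $s>0$. Its proof via differences and $\sup_n$ norms should go through exactly as for finite $q$. If that fails, a fallback is to embed between $B^{s}_{p',\infty}$ and $B^{s-\epsilon}_{p',r}$. That only controls capacities at a shifted smoothness, though, so the truncation route is the one I would pursue.
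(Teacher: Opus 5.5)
Your route is the paper's. From $Cap^*(S,B^{-s}_{p,q})>0$ you pass to $Cap>0$ for the predual via Proposition~\ref{dual}, then to $cap>0$ by Lemma~\ref{Netrus}, and back to $cap^*(S,B^{-s}_{p,q})>0$ by Proposition~\ref{dual}. For $q=1$ the paper, like you, takes the predual $\overset{\!\circ}{B^s}_{\!\!\!\!\!\; p',\infty}$ from \eqref{B1.2} and uses Lemma~\ref{capz}. Your Step~1 (cutting off by a test function and taking $\Re$ or $\Im$ to reach $\langle\zeta_1,1\rangle=1$) justifies an equivalence the paper only asserts.

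You are also right that the endpoint is delicate. For $q=1$ the paper silently uses the nontrivial inequality $Cap(S,B^s_{p',\infty})\lesssim cap(S,B^s_{p',\infty})$, i.e.\ Netrusov's equivalence at $q'=\infty$, although Lemma~\ref{Netrus} is quoted only for $q<\infty$. With that input the chain
\[
Cap(S,\overset{\!\circ}{B^s}_{\!\!\!\!\!\; p',\infty})>0\ \Rightarrow\ Cap(S,B^s_{p',\infty})>0\ \Rightarrow\ cap(S,B^s_{p',\infty})>0\ \Rightarrow\ cap(S,\overset{\!\circ}{B^s}_{\!\!\!\!\!\; p',\infty})>0
\]
finishes the case $q=1$.

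The gap is in the repair you propose for the middle implication, which does not work.

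\emph{It is set up backwards.} The inclusion $X_A\subset X_A^+$ already gives $cap\leqslant Cap$. So you must start from $g\geqslant1$ near $S$ and produce a function equal to $1$ near $S$; composing a $g$ that already equals $1$ near $S$ proves nothing.

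\emph{The key estimate is false for large $s$.} You need $\|F\circ g\|_{B^s_{p',\infty}}\lesssim\|g\|_{B^s_{p',\infty}}$. This does follow from first differences and $|F(a)-F(b)|\leqslant\|F'\|_\infty|a-b|$ when $0<s<1$. It fails for $s>1+\frac{1}{p'}$, whatever the second index is. Take $g_N(x)=Nx_1\chi(x)$ with a fixed bump $\chi$; then $\|g_N\|_{B^s_{p',\infty}}\asymp N$. But $F\circ g_N$ passes from $0$ to $1$ across a slab of width $\asymp 1/N$, so differences of step $\asymp1/N$ and order $m>s$ give $\|F\circ g_N\|_{B^s_{p',\infty}}\gtrsim N^{s-1/p'}$ (compare Dahlberg's degeneracy of superposition operators).

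So ``exactly as for finite $q$'' does not help: for large $s$, $cap\asymp Cap$ cannot be obtained from a plain composition estimate for arbitrary admissible $g$, at any $q$. To make Step~3 self-contained you would need a genuine smooth-truncation argument applied only to special near-extremal functions, such as nonlinear potentials of positive measures, which satisfy pointwise derivative bounds (cf.\ \cite{AdHe}). Otherwise you have to take, as the paper does, the endpoint case $q'=\infty$ of Netrusov's equivalence as an input.
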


	\begin{proof} Case $q>1$.
		
		The existence of a distribution $\zeta$ described above is equivalent to the inequality $Cap^*(S,B^{-s}_{p,q})>0$. By Proposition~\ref{dual}, this may be reduced to $Cap(S,B^s_{p',q'})>0$. So, we have the inequality $cap(S,B^s_{p',q'})>0$ (Lemma~\ref{Netrus}), use Proposition~\ref{dual} again and obtain the inequality $cap^*(S,B^{-s}_{p,q})>0$. This inequality is equivalent to the existence of the measure $\mu$.

		The case $q=1$ requires a slight modification of the proof.		
		We need the duality formula~\eqref{B1.2} and Lemma~\ref{capz}.
		The rest of the proof is the same as in the case $q>1$.
		
	\end{proof}

	The following proposition collects basic properties of $\mathcal{H}_{\alpha,q}$. The proof is postponed till Section~\ref{tech}.
	\begin{St}\label{HLp}
		The Netrusov--Hausdorff set function satisfies the following properties.
		\begin{enumerate}[1.]
			\item $\mathcal{H}_\alpha=\mathcal{H}_{\alpha,1}$.
			
			\item $\mathcal{H}_{\alpha,q}$ is an outer measure.
			
			\item If $1\leqslant q<\infty$, then $\mathcal{H}_{\alpha,q}$ is a Borel measure.

			\item Let $S$ be a $\sigma$-finite set relative to the $\alpha$-packing measure. Then, for any $q>0$, $S$ is $\sigma$-finite relative to  $\mathcal{H}_{\alpha,q}$.

			\item Let $0<q_1<q_2$. Then $\mathcal{H}_{\alpha,q_2}$ is absolutely continuous with respect to $\mathcal{H}_{\alpha,q_1}$.

			\item Let $1\leqslant q_1<q_2$. Then we have the following statements:
			
			If $\mathcal{H}_{\alpha,q_1}(S)<\infty$, then   $\mathcal{H}_{\alpha,q_2}(S)=0$;
			
			if $\mathcal{H}_{\alpha,q_2}(S)>0$, then   $\mathcal{H}_{\alpha,q_1}(S)=\infty$.
			
			\item Let $\alpha_1<\alpha_2$. Then for any $q_1,q_2$ we have the following statements:
			
			If $\mathcal{H}_{\alpha_1,q_1}(S)<\infty$, then   $\mathcal{H}_{\alpha_2,q_2}(S)=0$;
			
			if $\mathcal{H}_{\alpha_2,q_2}(S)>0$, then   $\mathcal{H}_{\alpha_1,q_1}(S)=\infty$.

			\item\label{I8}
			Let $0<q<1$. Let $f$ be a regular function such that
			\begin{equation}
				\int\limits_{0}^{1}\left(\frac{t^\alpha}{f(t)}\right)^{\frac{q}{1-q}}\frac{dt}{t}<\infty.
			\end{equation}
			Then $\mathcal{H}_{\alpha,q}$ is absolutely continuous with respect to $\Lambda_f$.
			Moreover,

			if $\mathcal{H}_{\alpha,q}(S)>0$, then   $\Lambda_f(S)=\infty$;
			
			if $\Lambda_f(S)<\infty$, then   $\mathcal{H}_{\alpha,q}(S)=0$.

			\item\label{I9} 
			Let $1<q\leqslant\infty$. Let $f$ be a regular function such that
			\begin{equation}
				\int\limits_{0}^{1}\left(\frac{f(t)}{t^\alpha}\right)^{\frac{q}{q-1}}\frac{dt}{t}<\infty.
			\end{equation}
			Then $\Lambda_f$ is absolutely continuous with respect to $\mathcal{H}_{\alpha,q}$.
			Moreover,

			if $\Lambda_f(S)>0$, then   $\mathcal{H}_{\alpha,q}(S)=\infty$;
			
			if $\mathcal{H}_{\alpha,q}(S)<\infty$, then   $\Lambda_f(S)=0$.

		\end{enumerate}
	\end{St}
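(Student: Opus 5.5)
The plan is to treat the nine items in order, working throughout with the covering sums
\[
\Phi_{\alpha,q}(\{B_j\})=\sum_k\Big(\sum_{\diam B_j\in\Delta_k}(\diam B_j)^\alpha\Big)^q ,
\]
or their supremum version when $q=\infty$. Every comparison between capacities will be reduced to a comparison of these quantities for one and the same cover, followed by an infimum and then $\delta\to0$.

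\textbf{Items 1--3.} Item~1 is immediate: for $q=1$ the sum over $k$ just reassembles $\sum_j(\diam B_j)^\alpha$.

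For item~2, monotonicity is clear. For countable subadditivity, take covers of $F_i$ that are almost optimal up to $\eps 2^{-i}$ and take their union. For $q\geqslant1$ the inequality $\big(\sum_i a_i\big)^q\geqslant\sum_i a_i^q$ points the wrong way, so I use the Lorentz-norm description \eqref{NHq} together with the quasi-triangle inequality in $\ell_{\alpha,\alpha q}$. This only gives subadditivity up to a constant. The plan is therefore either to check that the author's "outer measure" is meant up to equivalence, or to replace $\mathcal H_{\alpha,q}$ by the equivalent quantity $\inf\|\cdot\|^{\alpha q}$ and to verify that this quantity is exactly countably subadditive. The latter holds because the $\ell_{\alpha,\alpha q}$ quasi-norm raised to the power $\alpha q$ is subadditive under disjoint union of sequences when $\alpha q\geqslant\alpha$, via the $L_{p,q}$ formula with $q\geqslant p$.

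For item~3, I verify the Carath\'eodory metric-outer-measure criterion. If $\dist(E,F)>0$ and $\delta<\dist(E,F)$, then no ball of the cover meets both sets, so the cover splits into two subfamilies. For $q\geqslant1$ superadditivity of $x\mapsto x^q$ applied levelwise gives $\Phi(\text{cover})\geqslant\Phi(\text{cover}_E)+\Phi(\text{cover}_F)$. Hence $\mathcal H_{\alpha,q}(E\cup F,\delta)\geqslant\mathcal H_{\alpha,q}(E,\delta)+\mathcal H_{\alpha,q}(F,\delta)$, and Borel sets are measurable.

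\textbf{Items 4--7.} For item~4, if $\widetilde{\mathcal P}_\alpha(E)<\infty$, cover $E$ at scale $2^{-k}$ for $k\geqslant k_0$ using a maximal separated set. By Remark~\ref{yp} this takes $N\lesssim 2^{k\alpha}$ balls, but using a single scale gives only one $k$-term of size $O(1)$, so $\mathcal H_{\alpha,q}(E)\lesssim\widetilde{\mathcal P}_\alpha(E)^q<\infty$. Decomposing $S$ into countably many such $E$ via the definition of $\mathcal P_\alpha$ gives $\sigma$-finiteness.

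For item~5, when $q_1<q_2$ the elementary inequality $\big(\sum_k a_k^{q_2}\big)^{1/q_2}\leqslant\big(\sum_k a_k^{q_1}\big)^{1/q_1}$ gives $\mathcal H_{\alpha,q_2}(\cdot,\delta)\leqslant\mathcal H_{\alpha,q_1}(\cdot,\delta)^{q_2/q_1}$, and absolute continuity follows.

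For item~6, take a cover with $\Phi_{\alpha,q_1}$ bounded and $\delta\to0$. Each level term $a_k$ is bounded, so $a_k^{q_2}\leqslant(\sup_k a_k)^{q_2-q_1}a_k^{q_1}$. The key step is to show that $\sup_k a_k\to0$ along almost-optimal covers as $\delta\to0$. I do this by a first-step redistribution argument: restrict the cover to balls of level $\geqslant k$ and use finiteness of $\mathcal H_{\alpha,q_1}$ via the Borel regularity from item~3, in the same way as the classical proof that $\mathcal H_\alpha(S)<\infty$ implies $\mathcal H_\beta(S)=0$ for $\beta>\alpha$.

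Item~7 is the classical argument: $(\diam B)^{\alpha_2}\leqslant\delta^{\alpha_2-\alpha_1}(\diam B)^{\alpha_1}$ at each level. Since the number of levels is unbounded, I first reduce to $q=\infty$ on one side and $q$ small on the other using item~5 and its reversal; the geometric factor $2^{-k(\alpha_2-\alpha_1)}$ makes all sums over $k$ converge.

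\textbf{Items 8--9.} These are H\"older's inequality in $k$. For item~9, with $f(t)\asymp f(2^{-k})$ on $\Delta_k$ by regularity,
\[
\sum_j f(\diam B_j)\asymp\sum_k\frac{f(2^{-k})}{2^{-k\alpha}}\,a_k
\leqslant\Big(\sum_k\Big(\frac{f(2^{-k})}{2^{-k\alpha}}\Big)^{q'}\Big)^{1/q'}\Big(\sum_k a_k^{q}\Big)^{1/q},
\]
and the first factor is the given integral, restricted to $k\geqslant k(\delta)$, so it tends to $0$ as $\delta\to0$. Hence $\Lambda_f(S)\leqslant o(1)\,\mathcal H_{\alpha,q}(S)^{1/q}$, which yields both implications.

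Item~8 is the reverse H\"older inequality with exponents $1/q$ and $1/(1-q)$:
\[
\sum_k a_k^q=\sum_k\Big(\frac{2^{-k\alpha}}{f(2^{-k})}\Big)^{q}\big(b_k\big)^q\leqslant\Big(\sum_k b_k\Big)^q\Big(\sum_k\Big(\frac{2^{-k\alpha}}{f(2^{-k})}\Big)^{\frac q{1-q}}\Big)^{1-q},
\]
where $b_k=\sum_{\diam B_j\in\Delta_k}f(\diam B_j)$. The tail again vanishes as $\delta\to0$.

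The main obstacle is item~6, together with the exact versus merely quasi subadditivity in item~2. The step $\sup_k a_k\to0$ needs care, because a single level could carry mass of order one. I would first try a density argument along the lines of Mattila's treatment of Hausdorff measures applied to the Borel measure $\mathcal H_{\alpha,q_1}$.
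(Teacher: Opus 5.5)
Your arguments for items 1, 3, 4, 5, 7, 8 and 9 are essentially the paper's. Items 2 and 6, however, contain genuine gaps.

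\textbf{Item 2, case $1<q<\infty$.} Your repair rests on a false claim. For a disjoint union of sequences $m_{a\sqcup b}=m_a+m_b$, so for $q\geqslant1$
$\|a\sqcup b\|^{\alpha q}_{\ell_{\alpha,\alpha q}}=\alpha q\int_0^\infty(m_a+m_b)^q t^{\alpha q}\frac{dt}{t}\geqslant\|a\|^{\alpha q}_{\ell_{\alpha,\alpha q}}+\|b\|^{\alpha q}_{\ell_{\alpha,\alpha q}}$.
The $\alpha q$-th power is \emph{super}additive, which is the very obstruction you started from.

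Minkowski only makes the $\alpha$-th power subadditive, giving $\mathcal H(\bigcup_jA_j)^{1/q}\lesssim\sum_j\mathcal H(A_j)^{1/q}$. For $n$ equal pieces this loses a factor $n^{q-1}$. So merging near-optimal covers of the $A_j$ that live at common scales cannot give subadditivity, even up to a constant. Moreover, subadditivity of an equivalent set function would not make $\mathcal H_{\alpha,q}$ itself an outer measure, and items 3 and 6 need the exact property.

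The missing idea is \emph{scale separation}, which is the paper's proof. Since $\mathcal H_{\alpha,q}(A_{N+1})$ is a limit as $\delta\to0$, you may choose its near-optimal cover $\mathfrak D_{N+1}$ with all diameters below $2^{-k_{N+1}-1}$. Here $k_{N+1}$ is taken so large that the already chosen covers satisfy $\sum_{k\geqslant k_{N+1}}\big(\sum_{j\leqslant N}\sum_{B\in\mathfrak D_j,\,\diam B\in\Delta_k}(\diam B)^\alpha\big)^q\leqslant\varepsilon^q2^{-N-1}$. On each block of levels $(k_N,k_{N+1}]$ only $\mathfrak D_N$ then carries non-negligible mass. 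The inequality $(a+b)^q\leqslant\varepsilon^{1-q}a^q+(1-\varepsilon)^{1-q}b^q$ then yields $\mathcal H_{\alpha,q}(\bigcup_N A_N)\leqslant(1-\varepsilon)^{1-q}\sum_N\mathcal H_{\alpha,q}(A_N)+O(\varepsilon)$.

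\textbf{Item 6.} The step you isolate, $\sup_ka_k\to0$ \emph{along almost-optimal covers}, is false as stated. Take the middle-thirds Cantor set with $\alpha=\log2/\log3$ and $q_1=1$. The $2^n$ intervals of generation $n$ form optimal covers for $\mathcal H_\alpha$, and they put all the mass $1$ into a single level $\Delta_k$. So you must \emph{construct} covers that spread the mass over many levels, and your sketch does not do this. In particular, the classical proof that $\mathcal H_\alpha(S)<\infty$ implies $\mathcal H_\beta(S)=0$ uses the factor $\delta^{\beta-\alpha}$, which is absent when the dimension is unchanged.

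The paper argues as follows. $\mathcal H_{\alpha,q_1}$ is a non-atomic Borel measure by item 3, which is where $q_1\geqslant1$ is used. Hence $S$ splits into countably many pieces $S_j$ with $\mathcal H_{\alpha,q_1}(S_j)<1/n$. Subadditivity of $\mathcal H_{\alpha,q_2}$, your item-5 bound $\mathcal H_{\alpha,q_2}(\cdot)\leqslant\mathcal H_{\alpha,q_1}(\cdot)^{q_2/q_1}$, and additivity of $\mathcal H_{\alpha,q_1}$ then give $\mathcal H_{\alpha,q_2}(S)\leqslant\sum_j\mathcal H_{\alpha,q_1}(S_j)^{q_2/q_1}\leqslant n^{1-q_2/q_1}\mathcal H_{\alpha,q_1}(S)\to0$. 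Covering the pieces at disjoint ranges of levels, as in item 2, is the cover-level version of this, and it is exactly what makes your $\sup_ka_k\to0$ true. This step, like item 3, depends on item 2 being repaired first.
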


	We need a version of the Frostman Lemma for $\mathcal{H}_{\alpha,q}$.
	Let $\varphi_{x,r}(y)=\varphi\left(\frac{y-x}{r} \right)$.
	\begin{Le}\label{frost}
		Let~$\varphi$ be a bounded radially symmetric, radially non-increasing compactly supported function such that~$\overline{B}_3(0)=\supp \varphi $. Let~$\mu$ be a signed measure of bounded variation. Assume
		\begin{equation}
			\Big|\sum\limits_{B_{r_j}(x_j)\in \mathfrak{B}}\underset{\mathbb{R}^d}{\int}\varphi_{x_j,r_j}(y) d\mu(y) \Big|\lesssim \Big(\|\{r_j| \ {B_{r_j}(x_j)\in \mathfrak{B}}\} \|^q_{\ell_{\alpha,q}}\Big)^\gamma
		\end{equation}
		for any finite disjoint family of balls~$\mathfrak{B}$ with radii less than $1$. Then, the inequality
		\begin{equation}
			|\mu|(A)\lesssim (\mathcal{H}_{\alpha,\frac{q}{\alpha}}(A))^\gamma
		\end{equation}
		holds true for any Borel set~$A$.
	\end{Le}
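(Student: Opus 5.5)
The plan is to reduce to the case where $\mu$ has a fixed sign near the set, cover the set by a nearly optimal family of balls, pass to finitely many disjoint subfamilies, and apply the hypothesis to each subfamily. By the Hahn decomposition, $\mathbb{R}^d=P\sqcup N$ with $\mu^-(P)=\mu^+(N)=0$, so $|\mu|(A)=\mu^+(A\cap P)+\mu^-(A\cap N)$. By monotonicity of $\mathcal{H}_{\alpha,q/\alpha}$ it suffices to show $\mu^+(A\cap P)\lesssim \mathcal{H}_{\alpha,\frac{q}{\alpha}}(A)^\gamma$; the other term is symmetric, applying the hypothesis to $-\mu$. Since $|\mu|$ is a finite Radon measure, inner regularity reduces this further to $\mu^+(K)\lesssim \mathcal{H}_{\alpha,\frac{q}{\alpha}}(K)^\gamma$ for compact $K\subset A\cap P$. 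We may also assume $\mathcal{H}_{\alpha,\frac{q}{\alpha}}(K)<\infty$.

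\textbf{Density step.} Before covering, I would shrink $K$ using the Besicovitch differentiation theorem. For $\mu^+$-a.e.\ $x\in P$ we have $\mu^-(B_{3r}(x))/\mu^+(B_{3r}(x))\to0$ as $r\to0$. Hence, for given $\eps>0$ and after discarding a set of small $\mu^+$-measure, we may assume there is $\delta_0>0$ such that
\begin{equation*}
\mu^-(B_{3r}(x))\leqslant\eps\,\mu^+(B_{3r}(x)) \quad\text{for all } x\in K,\ r<\delta_0 .
\end{equation*}

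\textbf{Covering step.} For $\delta<\delta_0$, take a cover $\{B_i\}$ of $K$ with $\diam B_i<\delta$ and
\begin{equation*}
\sum_k\Big(\sum_{\diam B_i\in\Delta_k}(\diam B_i)^\alpha\Big)^{q/\alpha}\leqslant 2\,\mathcal{H}_{\alpha,\frac{q}{\alpha}}(K,\delta)+\eps .
\end{equation*}
By compactness we may take the cover finite and every ball meeting $K$. Replace each $B_i$ by a ball $B_{2\rho_i}(y_i)$ with $y_i\in K$ and $\rho_i\asymp\diam B_i$; by Lemma~\ref{Lornor} this changes the $\ell_{\alpha,q}$-quasinorm of the radii only by a constant, so $\|\{\rho_i\}\|^q_{\ell_{\alpha,q}}\lesssim\mathcal{H}_{\alpha,\frac{q}{\alpha}}(K,\delta)+\eps$. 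The Besicovitch covering theorem then splits the balls $B_{2\rho_i}(y_i)$ into $N=N(d)$ subfamilies $\mathfrak{G}_1,\dots,\mathfrak{G}_N$. Each subfamily consists of pairwise disjoint balls, and together they still cover $K$.

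\textbf{Applying the hypothesis.} Inside each $\mathfrak{G}_m$ the balls $B_{\rho_i}(y_i)$ are disjoint with radii less than $1$, so the hypothesis applies to $\mathfrak{G}_m$ with radii $r_i=\rho_i$. Because $\varphi$ is radially non-increasing with support $\overline{B}_3(0)$, it is positive on $B_3(0)$, so $c:=\varphi(2)>0$ gives $\varphi_{y_i,\rho_i}\geqslant c\chi_{B_{2\rho_i}(y_i)}$. Therefore
\begin{equation*}
\int\varphi_{y_i,\rho_i}\,d\mu\geqslant c\,\mu^+(B_{2\rho_i}(y_i))-\|\varphi\|_\infty\,\mu^-(B_{3\rho_i}(y_i)).
\end{equation*}
Summing over $i\in\mathfrak{G}_m$ and then over $m$ bounds $c\,\mu^+(K)$ by $N\,(\mathcal{H}_{\alpha,\frac{q}{\alpha}}(K,\delta)+\eps)^\gamma$, using the monotonicity of the $\ell_{\alpha,q}$-quasinorm under passing to subfamilies, plus an error term $\|\varphi\|_\infty\sum_i\mu^-(B_{3\rho_i}(y_i))$. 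Letting $\delta\to0$ and then $\eps\to0$ would finish the proof, provided the error term is $\lesssim\eps$.

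\textbf{Main obstacle.} The hard step is controlling that error term. The density condition gives $\mu^-(B_{3\rho_i})\leqslant\eps\,\mu^+(B_{3\rho_i})$, but the enlarged balls $B_{3\rho_i}$ inside one subfamily $\mathfrak{G}_m$ need not have bounded overlap, and $\mu^+(B_{3\rho_i})$ is not a priori comparable to $\mu^+(B_{2\rho_i})$. My first attempt would be to run the differentiation with the weighted averages themselves, i.e.\ to use
\begin{equation*}
\mu^-(B_{3r}(x))\leqslant\eps\int\varphi_{x,r}\,d\mu^+ \quad\text{for small } r .
\end{equation*}
This holds $\mu^+$-a.e.\ on $P$: write $\varphi$ as a superposition of indicators of balls and apply Besicovitch differentiation to each of the radii $s\in(2,3)$. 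It lets the error be absorbed directly, giving $\int\varphi_{y_i,\rho_i}\,d\mu\geqslant(1-\eps\|\varphi\|_\infty)\int\varphi_{y_i,\rho_i}\,d\mu^+\geqslant\tfrac{c}{2}\,\mu^+(B_{2\rho_i}(y_i))$. If that fails, the fallback is to use a doubling-type selection of "good" radii for $\mu^+$, choosing each $\rho_i$ within a factor $2$ of the given one so that $\mu^+(B_{3\rho_i})\lesssim\mu^+(B_{2\rho_i})$. This costs only a constant in the $\ell_{\alpha,q}$-quasinorm.
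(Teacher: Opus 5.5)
Your skeleton matches the paper's: Hahn decomposition, density points with a uniform scale, a near-optimal cover split into boundedly many disjoint subfamilies, and the hypothesis applied to each subfamily. The step you flag as the main obstacle, however, is left open, and neither proposed fix works as stated. Your target, $\int\varphi_{y_i,\rho_i}\,d\mu\gtrsim\int\varphi_{y_i,\rho_i}\,d\mu^+$, is the right one.

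The missing idea is to integrate the density condition you already have against the layer-cake decomposition of $\varphi_{x,r}$ on \emph{both} sides. That condition is $\mu^-(B_s(x))\leqslant\eps\,\mu^+(B_s(x))$ for all $x\in K$ and all $s<3\delta_0$. Every superlevel set $\{\varphi_{x,r}>t\}$ is a ball centred at $x$ of radius at most $3r$. So for $r<\delta_0$ you get $\int\varphi_{x,r}\,d\mu^-\leqslant\eps\int\varphi_{x,r}\,d\mu^+$, hence $\int\varphi_{x,r}\,d\mu\geqslant(1-\eps)\int\varphi_{x,r}\,d\mu^+\geqslant(1-\eps)\,c\,\mu^+(B_{2r}(x))$. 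This is the lemma just before Corollary~\ref{noc} in the paper, together with Corollary~\ref{noc} itself.

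With this there is no error term: $(1-\eps)\,c\,\mu^+(K)\leqslant\sum_m\sum_{i\in\mathfrak{G}_m}\int\varphi_{y_i,\rho_i}\,d\mu$. Each inner sum is controlled by the hypothesis, and you never need to control the overlap of the enlarged balls $B_{3\rho_i}(y_i)$.

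Your intermediate inequality $\mu^-(B_{3r}(x))\leqslant\eps\int\varphi_{x,r}\,d\mu^+$ is not what the superposition argument yields, since that argument compares $\mu^-$ and $\mu^+$ on the same balls. Your inequality instead pits the unweighted mass of $\mu^-$ on all of $B_{3r}(x)$ against a $\mu^+$-average that gives almost no weight near $\partial B_{3r}(x)$ (think of $\varphi(y)=(3-|y|)_+$). It would require $\mu^+(B_{3r}(x))\lesssim\int\varphi_{x,r}\,d\mu^+$, a doubling-type property, and it can fail for non-doubling $\mu^+$.

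The fallback has the same defect. A general Radon measure need not have a doubling radius at $y_i$ anywhere in a prescribed range $[\rho_i,2\rho_i]$. Doubling radii are only guaranteed $\mu^+$-a.e.\ along some sequence tending to $0$, while your $\rho_i$ are dictated by the near-optimal cover.

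There is also a smaller problem in the covering step. Besicovitch's theorem applied to balls centred at the finitely many points $y_i$ only gives a subfamily covering $\{y_i\}$, not $K$. So ``together they still cover $K$'' does not follow. What you need is Morse's covering theorem (Lemma~\ref{Morse1}). Take $\rho_i=2\diam B_i$; then every point of $B_i$ lies in $B_{2\rho_i/3}(y_i)$, which is exactly the hypothesis of that lemma. It returns a subfamily of the balls $B_{2\rho_i}(y_i)$ that covers $K$ and splits into $\theta(d)$ disjoint subfamilies, which is what your argument uses.
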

	
	This version of the Frostman lemma for the standard Hausdorff measure $\mathcal{H}_\alpha$ was studied in~\cite[Lemma~2.3]{My} (a weaker version of the lemma had firstly appeared in~\cite{StolyarovWojciechowski2014}). An analog of this lemma plays the pivotal role in~\cite{AyStWo2020} (see Lemma 4.2 in that paper), where a simpler discrete analog of the dimension problem for Fourier constrained measures is solved. A similar lemma provides good (better than the ones given by the energy method) dimensional estimates for Riesz products, see~\cite{AyStWo2021} (the proof of Theorem 2.8). The proof of Lemma~\ref{frost} is the same as in~\cite{My}. We give it in Section~\ref{tech} below.
	
	\begin{Cor}\label{dualeq}
		Let~$\varphi$ be a bounded radially symmetric, radially non-increasing function with compact support such that~$\overline{B}_3(0)=\supp \varphi $. Let $X$ be a standard Banach space of distributions. Let $0<\alpha<d$ and $0<q\leqslant\infty$. Assume
		\begin{equation}
			\Big\|\sum\limits_{B_{r_j}(x_j)\in \mathfrak{B}}\varphi_{x_j,r_j} \Big\|_{X'}\lesssim \Big(\|\{r_j| \ {B_{r_j}(x_j)\in \mathfrak{B}}\}\|^q_{\ell_{\alpha,q}}\Big)^\gamma
		\end{equation}
		for any finite disjoint family of balls~$\mathfrak{B}$  with radii of balls less than $1$. Then for any measure of bounded variation $\mu$ and any Borel $A$ such that $\mathcal{H}_{\alpha,\frac{q}{\alpha}}(A)<\infty$ and $\mu\in X$ we have $\mu(A)=0$.
	\end{Cor}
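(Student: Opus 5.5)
The plan is to reduce Corollary~\ref{dualeq} directly to Lemma~\ref{frost}. Fix a measure $\mu$ of bounded variation with $\mu\in X$, and a finite disjoint family $\mathfrak{B}$ of balls with radii less than $1$. The function $\Phi_{\mathfrak{B}}=\sum_{B_{r_j}(x_j)\in\mathfrak{B}}\varphi_{x_j,r_j}$ is a finite sum of bounded, compactly supported functions. We want to identify the quantity appearing in the hypothesis of Lemma~\ref{frost} with a duality pairing:
\begin{equation*}
\sum_{B_{r_j}(x_j)\in\mathfrak{B}}\int_{\mathbb{R}^d}\varphi_{x_j,r_j}\,d\mu=\langle \mu,\Phi_{\mathfrak{B}}\rangle,
\end{equation*}
where the pairing is between $X$ and $X'$. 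Then
\begin{equation*}
|\langle\mu,\Phi_{\mathfrak{B}}\rangle|\leqslant\|\mu\|_X\|\Phi_{\mathfrak{B}}\|_{X'}\lesssim\|\mu\|_X\Big(\|\{r_j\}\|^q_{\ell_{\alpha,q}}\Big)^\gamma .
\end{equation*}
Since $\mu$ is fixed, $\|\mu\|_X$ is just a constant. So the hypothesis of Lemma~\ref{frost} holds for $\mu$, and we obtain $|\mu|(A)\lesssim(\mathcal{H}_{\alpha,\frac{q}{\alpha}}(A))^\gamma$ for every Borel $A$.

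The main step, and the only delicate one, is justifying the identity between the integral against $\mu$ and the abstract pairing $\langle\mu,\Phi_{\mathfrak{B}}\rangle_{X\times X'}$. The function $\varphi$ need not be smooth, so $\Phi_{\mathfrak{B}}$ is not a test function, and Remark~\ref{dualsp} only guarantees consistency of the pairing on $S(\mathbb{R}^d)$. I would argue as follows.

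First, the hypothesis implicitly says $\Phi_{\mathfrak{B}}\in X'$, so $\Phi_{\mathfrak{B}}$ is a tempered distribution acting on $S$. Take a standard mollifier $\rho_\varepsilon$ and set $\mu_\varepsilon=\mu*\rho_\varepsilon\in S$ after truncation; more precisely, use $(\mu*\rho_\varepsilon)\chi_R$ with a smooth cutoff, as in the proof of Lemma~\ref{capz}. For these Schwartz functions the pairing with $\Phi_{\mathfrak{B}}$ is an honest integral, by consistency of the $X'$ and $S'$ actions:
\begin{equation*}
\int(\mu*\rho_\varepsilon)\Phi_{\mathfrak{B}}=\int(\Phi_{\mathfrak{B}}*\rho_\varepsilon)\,d\mu .
\end{equation*}
Since $\Phi_{\mathfrak{B}}$ is bounded and compactly supported, and continuous except on the sphere boundaries where radial monotonicity applies, the right-hand side tends to $\int\Phi_{\mathfrak{B}}\,d\mu$ as $\varepsilon\to0$, at least for a.e.\ choice of radii.

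Alternatively, if one wants to avoid the boundary issue, one can exploit the freedom in the lemma and first replace $\varphi$ by a continuous radial non-increasing minorant. The left side needs $\mu_\varepsilon\to\mu$ weak-$*$ in the $X$–$X'$ pairing, which holds since $X$ is standard and mollification is bounded there; this is where I expect the real technical content to sit.

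Finally, take $A$ Borel with $\mathcal{H}_{\alpha,\frac{q}{\alpha}}(A)<\infty$. By Proposition~\ref{HLp}, $\mathcal{H}_{\alpha,q/\alpha}$ is a Borel outer measure. If $\mu(A)\neq0$ then $|\mu|(A)>0$; since $|\mu|$ is finite, we can split $A$ into pieces $A_1,\dots,A_N$ and use that $|\mu|$ gives positive mass to some piece. Instead, I would use the standard $\sigma$-finiteness trick. Because $\mathcal{H}_{\alpha,q/\alpha}\llcorner A$ is a finite measure and $\gamma>1$ is implicitly needed, we can partition $A$ into $N$ Borel sets each of capacity at most $\mathcal{H}(A)/N$, assuming the restricted measure is non-atomic (it is, since points have zero capacity). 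Then
\begin{equation*}
|\mu|(A)\lesssim N\,(\mathcal{H}(A)/N)^\gamma\to0 .
\end{equation*}
If instead $\gamma=1$ is the intended case, I would apply the estimate to the pieces on which $\mathcal{H}_{\alpha,q/\alpha}$ vanishes, after using part~6 or~7 of Proposition~\ref{HLp} to pass to a slightly larger index where $A$ has measure zero. Which route is needed depends on the value of $\gamma$ in the application to Theorem~\ref{merm}, and I would check that first.
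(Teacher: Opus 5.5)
Your reduction to Lemma~\ref{frost} through the $X$--$X'$ pairing is the same as the paper's first step. The gap is in the last step, where you pass from $|\mu|(A)\lesssim \mathcal{H}_{\alpha,\frac{q}{\alpha}}(A)^\gamma$ to $\mu(A)=0$. Your partition trick needs $\gamma>1$. In the intended application (Theorem~\ref{besres}, via Corollary~\ref{corbes} with $X'=B^{s}_{2,q'}$) one has $\gamma=1/q'<1$. For $\gamma\leqslant 1$ the inequality alone can never give $\mu(A)=0$. Take $\frac{q}{\alpha}=1$, a compact $K$ with $0<\mathcal{H}_\alpha(K)<\infty$, and $\mu=\mathcal{H}_\alpha|_K$. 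Then $|\mu|(B)\leqslant \mathcal{H}_\alpha(K)^{1-\gamma}\mathcal{H}_\alpha(B)^\gamma$ for every Borel $B$, yet $\mu(K)>0$. Your fallback via items~6 or~7 of Proposition~\ref{HLp} also fails. The hypothesis bounds the $X'$-norm only through $\ell_{\alpha,q}$ at the given $\alpha,q$. Since $\|\cdot\|_{\ell_{\alpha,q_2}}\leqslant\|\cdot\|_{\ell_{\alpha,q}}$ for $q_2>q$, you cannot restate it at a larger index, so knowing that $A$ has zero capacity there gives you nothing.

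The missing idea is to not absorb $\|\mu\|_X$ into the constant. The proof of Lemma~\ref{frost} gives $|\nu|(A)\lesssim \|\nu\|_X\,\mathcal{H}_{\alpha,\frac{q}{\alpha}}(A)^\gamma$ for every measure $\nu$ of bounded variation lying in $X$, with an implicit constant independent of $\nu$. Since $X$ is standard, pick $g_j\in S(\mathbb{R}^d)$ with $g_j\to\mu$ in $X$; each $\mu-g_j$ is again a measure of bounded variation in $X$. Because $\alpha<d$, item~7 of Proposition~\ref{HLp} gives $\mathcal{H}_d(A)=0$, so $A$ is Lebesgue-null and the $g_j$ put no mass on it. Hence $|\mu|(A)=|\mu-g_j|(A)\lesssim\|\mu-g_j\|_X\,\mathcal{H}_{\alpha,\frac{q}{\alpha}}(A)^\gamma\to 0$, for every $\gamma>0$. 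Note that you never used $\alpha<d$ or the density of $S(\mathbb{R}^d)$ in $X$ at this step, and these are exactly the ingredients needed.

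A smaller point on the pairing identity. You assume mollification is bounded on $X$, which is not among the hypotheses. Replacing $\varphi$ by a continuous minorant does not transfer the $X'$ bound either, since the $X'$-norm need not be monotone. When $\varphi$ is smooth, as in the application, the identity follows from three facts: approximate $\mu$ in $X$ by Schwartz functions, use consistency of the pairings on $S(\mathbb{R}^d)$, and use continuity of $X\hookrightarrow S'(\mathbb{R}^d)$. The paper writes this identity without comment.
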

	\begin{Rem}
		In particular, Corollary~\ref{dualeq} says that $\underline{\dim}_\mathcal{H}(\mu)\geqslant\alpha$.
	\end{Rem}
	\begin{proof}[Proof of Corollary~\ref{dualeq}.]
		Let $\mu\in X$ be a measure of bounded variation. Then, 
		\begin{equation}
			\begin{aligned}
				\Big|\sum\limits_{B_{r_j}(x_j)\in \mathfrak{B}}\underset{\mathbb{R}^d}{\int}\varphi_{x_j,r_j}(y) d\mu(y) \Big|
				&=
				\Big|\Big<\sum\limits_{B_{r_j}(x_j)\in \mathfrak{B}}\varphi_{x_j,r_j},\mu\Big>\Big|
				\leqslant
				\Big\|\sum\limits_{B_{r_j}(x_j)\in \mathfrak{B}}\varphi_{x_j,r_j} \Big\|_{X'}\|\mu\|_X\\
				&\lesssim
				\Big(\|\{r_j| \ {B_{r_j}(x_j)\in \mathfrak{B}}\} \|^q_{\ell_{\alpha,q}}\Big)^\gamma\|\mu\|_{X}.
			\end{aligned}
		\end{equation}
		
		Therefore, by Lemma~\ref{frost} we have the inequality $\|\mu\|(A)\lesssim \|\mu\|_X\mathcal{H}_{\alpha,\frac{q}{\alpha}}(A)^\gamma$. Let $g_j\in S(\mathbb{R}^d)$ be a sequence of functions such that $g_j\underset{X}{\rightarrow}\mu$. Then,
		\begin{equation}
			\|\mu\|(A)=\|\mu-g_j\|(A)\lesssim
			\|\mu-g_j\|_X\mathcal{H}_{\alpha,\frac{q}{\alpha}}(A)^\gamma
			\overset{j\rightarrow\infty}{\longrightarrow}0
		\end{equation}
		for any Borel set $A\subset\mathbb{R}^d$ with $\mathcal{H}_{\alpha,\frac{q}{\alpha}}(A)<\infty$. Here we identify a function $g_j$ and the Lebesgue continuous measure with the density $g_j$.
	\end{proof}

	\section{Proof of Theorems~\ref{nonendmy} and~\ref{merm}}\label{S2}

	\begin{Le}\label{strash}
		Let $X_0$ and $X_1$ be compatible Banach spaces. Let $\alpha_0>0$, $\alpha_0>\alpha_1$ and $1\leqslant p<\infty$. Assume
		\begin{equation}
			\Big\|\sum\limits_{B_{r_j}(x_j)\in \mathfrak{B}}\varphi_{x_j,r_j} \Big\|_{X_0}\lesssim \Big(\sum\limits_{B_{r_j}(x_j)\in \mathfrak{B}}r_j^{\alpha_0}\Big)^\frac{1}{p}
		\end{equation}
		and
		\begin{equation}
			\Big\|\sum\limits_{B_{r_j}(x_j)\in \mathfrak{B}}\varphi_{x_j,r_j} \Big\|_{X_1}\lesssim \Big(\sum\limits_{B_{r_j}(x_j)\in \mathfrak{B}}r_j^{\alpha_1}\Big)^\frac{1}{p}
		\end{equation}
		for any finite disjoint family of balls~$\mathfrak{B}$  with radii of balls less than $1$. Let $\alpha_{\theta}=(1-\theta)\alpha_0+\theta\alpha_1$. If $\alpha_\theta>0$ then
		\begin{equation}
			\Big\|\sum\limits_{B_{r_j}(x_j)\in \mathfrak{B}}\varphi_{x_j,r_j} \Big\|_{(X_0,X_1)_{\theta,h}}\lesssim
			\|\{r_j| \ {B_{r_j}(x_j)\in \mathfrak{B}}\}\|^{\frac{\alpha_{\theta}}{p}}_{\ell_{\alpha_\theta,\frac{h\alpha_\theta}{p}}}.
		\end{equation}
	\end{Le}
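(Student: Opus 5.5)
Fix a finite disjoint family $\mathfrak{B}$ with radii $r_j<1$ and put $F=\sum_{B_{r_j}(x_j)\in\mathfrak{B}}\varphi_{x_j,r_j}$. I will estimate the $K$-functional $K(t,F,X_0,X_1)$ by splitting the family $\mathfrak{B}$ according to radius. The interpolation integral then becomes a discrete Lorentz-type sum, and a Hardy-type inequality finishes the argument.

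\textbf{Splitting.} Group the balls dyadically: $\mathfrak{B}_k=\{B\in\mathfrak{B}\mid r_j\in\Delta_k\}$, and let $N_k=\#\mathfrak{B}_k$, so that $\sum_{\mathfrak{B}_k}r_j^{\alpha}\asymp N_k2^{-k\alpha}$ for every $\alpha$. For a threshold $m$, write $F=F_0^{(m)}+F_1^{(m)}$. Here $F_0^{(m)}$ is the sum of $\varphi_{x_j,r_j}$ over the balls in $\bigcup_{k\geqslant m}\mathfrak{B}_k$ (the small balls), and $F_1^{(m)}$ is the sum over the remaining, large balls. Each subfamily is still finite and disjoint, so both hypotheses apply to it. Since $\alpha_0>\alpha_1$, small balls are cheap in $X_0$ and large balls are cheap in $X_1$. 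This gives
\begin{equation}
K(t,F)\lesssim \Big(\sum_{k\geqslant m}N_k2^{-k\alpha_0}\Big)^{\frac1p}+t\Big(\sum_{k< m}N_k2^{-k\alpha_1}\Big)^{\frac1p},
\end{equation}
and I may choose $m=m(t)$ freely for each $t$.

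\textbf{Discretization.} Take $t=2^{-m(\alpha_0-\alpha_1)/p}$, so that $t^{-\theta}=2^{m\theta(\alpha_0-\alpha_1)/p}$, and replace $\int\frac{dt}{t}$ by a sum over $m$. The two resulting sums are
\begin{equation}
\sum_m\Big(2^{m\theta(\alpha_0-\alpha_1)/p}\Big(\sum_{k\geqslant m}N_k2^{-k\alpha_0}\Big)^{\frac1p}\Big)^h
\quad\text{and}\quad
\sum_m\Big(2^{m(\theta-1)(\alpha_0-\alpha_1)/p}\Big(\sum_{k< m}N_k2^{-k\alpha_1}\Big)^{\frac1p}\Big)^h .
\end{equation}
For the first sum, write $N_k2^{-k\alpha_0}=(N_k2^{-k\alpha_\theta})\,2^{-k\theta(\alpha_0-\alpha_1)}$. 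The resulting weights decay geometrically in $k-m$ with exponent rate $\theta(\alpha_0-\alpha_1)>0$. For the second sum, the analogous rate is $(1-\theta)(\alpha_0-\alpha_1)>0$, so this step needs $0<\theta<1$.

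\textbf{Hardy inequality and conclusion.} A discrete Hardy inequality for geometrically weighted tails works for any positive power $h/p$; when $h/p<1$, first use $(\sum a_k)^{h/p}\leqslant\sum a_k^{h/p}$. It bounds both sums by
\begin{equation}
\sum_k\big(N_k2^{-k\alpha_\theta}\big)^{h/p}.
\end{equation}
By Lemma~\ref{Lornor}, this is $\asymp\|\{r_j\}\|_{\ell_{\alpha_\theta,h\alpha_\theta/p}}^{h\alpha_\theta/p}$; the assumption $\alpha_\theta>0$ is needed here for that identification. Taking the $h$-th root gives the claimed exponent $\alpha_\theta/p$.

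\textbf{Main obstacle.} The main obstacle is carrying out the passage from the integral to the sum and the Hardy step carefully. Both hypotheses hold only for radii less than $1$, so $k\geqslant0$. In the regime $t\geqslant1$, i.e. $m\leqslant0$, take $F_1=0$. Since $-\theta<0$, the integral $\int_1^\infty t^{-\theta h}\frac{dt}{t}$ converges, and the term $K(t,F)\leqslant\|F\|_{X_0}$ is controlled by the $m=0$ term. I would also double-check that the monotonicity of $K$ in $t$ lets me compare $\int_{2^{-(m+1)c}}^{2^{-mc}}$ with the value at a dyadic point, which it does.
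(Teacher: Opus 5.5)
Your proof is correct. It reaches the bound by a more direct route than the paper.

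\textbf{The paper's route.} The paper works with the linear operator $Te_j=\varphi_{x_j,r_j}$ on the weighted sequence spaces $\ell_{p,1}(\mathfrak{B},\alpha_i)=L_{p,1}(\mathbb{N},\nu_{\alpha_i})$, where $\nu_{\alpha}\{j\}=r_j^{\alpha}$. It reads the hypotheses as restricted-type estimates $\|T\chi_E\|_{X_i}\lesssim\|\chi_E\|_{L_p(\nu_{\alpha_i})}$ and upgrades them to boundedness on $\ell_{p,1}$ via the Stein--Weiss theorem (Theorem~\ref{SW}). It then interpolates $T$, which leaves it to compute $\|\chi_{\mathbb{N}}\|$ in $(\ell_{p,1}(\mathfrak{B},\alpha_0),\ell_{p,1}(\mathfrak{B},\alpha_1))_{\theta,h}$. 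For this it passes to the weighted $\ell_p$ pair via a $K$-functional comparison for characteristic functions, which it only sketches. It then quotes Gilbert's formula for interpolating weighted $L_p$ spaces and finishes with a change of variables in the distribution function $m_{\mathfrak{B}}$.

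\textbf{Your route.} You skip the operator layer. Your threshold splitting $F=F_0^{(m)}+F_1^{(m)}$ is exactly the near-optimal decomposition of $\chi_{\mathbb{N}}$ for the weighted $\ell_p$ pair, so at heart the two proofs do the same computation. The difference is that you only ever apply the hypotheses to subfamilies of $\mathfrak{B}$, which is literally what is assumed. You then replace Gilbert's theorem by a dyadic discretization, a discrete Hardy inequality, and Lemma~\ref{Lornor}.

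\textbf{What each buys.} The paper's version is shorter given the cited results, and it explains conceptually why a Lorentz quasi-norm of the radii appears: weighted $L_p$ spaces interpolate to Lorentz spaces. Yours is self-contained and avoids the sketched $\ell_{p,1}$-versus-$\ell_p$ comparison. It is also slightly more general. The paper's last display uses $\sum_{r_j>\tau}r_j^{\alpha_1}\leqslant m_{\mathfrak{B}}(\tau)\tau^{\alpha_1}$, which needs $\alpha_1\leqslant 0$; this holds in the application, where $\alpha_1=-d$. Your Hardy step has geometric decay rates $\theta(\alpha_0-\alpha_1)$ and $(1-\theta)(\alpha_0-\alpha_1)$, so it works for either sign of $\alpha_1$, as the lemma is stated.
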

	Here the condition $\alpha_\theta>0$ is necessary for the space $\ell_{\alpha_\theta,\frac{h\alpha_\theta}{p}}$ to be correctly defined.
	We will prove this lemma in Section~\ref{tech}, since it is justified by standard methods of interpolation theory.
	
	\begin{Le}\label{dd}
		Let~$\varphi\in C^\infty_0(\mathbb{R}^d)$ be a bounded radially symmetric, radially non-increasing compactly supported function such that~$\supp \varphi\subset\overline{B}_3(0)$. Then
		\begin{equation}\label{L_2}
			\Big\|\sum\limits_{B_{r_j}(x_j)\in \mathfrak{B}}\varphi_{x_j,r_j} \Big\|_{L_2(\mathbb{R}^d)}\lesssim
			\Big(\sum\limits_{B_{r_j}(x_j)\in \mathfrak{B}}r_j^{d}\Big)^\frac{1}{2}
		\end{equation}
		and
		\begin{equation}\label{W^d}
			\Big\|\sum\limits_{B_{r_j}(x_j)\in \mathfrak{B}}\varphi_{x_j,r_j} \Big\|_{W^{d}_2(\mathbb{R}^d)}\lesssim \Big(\sum\limits_{B_{r_j}(x_j)\in \mathfrak{B}}r_j^{-d}\Big)^\frac{1}{2}
		\end{equation}
		for any disjoint family of balls~$\mathfrak{B}$  with radii of balls less than $1$.
	\end{Le}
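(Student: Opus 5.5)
The two estimates are of different nature: \eqref{L_2} is a disjointness/bounded-overlap statement, while \eqref{W^d} needs a derivative estimate. We treat them separately. Throughout, $\mathfrak{B}$ may be taken finite; the general case follows by monotone convergence for \eqref{L_2} and by Fatou's lemma (lower semicontinuity of the norm under $L_2$-convergence of partial sums) for \eqref{W^d}.

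\emph{Proof of \eqref{L_2}.} Set $F=\sum_j\varphi_{x_j,r_j}$. Each $\varphi_{x_j,r_j}$ is supported in $\overline{B}_{3r_j}(x_j)$ and bounded by $\|\varphi\|_\infty$. The balls $B_{r_j}(x_j)$ are disjoint, but their threefold dilates may overlap a lot (a large ball surrounded by many tiny ones). So a pointwise bounded-overlap argument fails, and we argue with the Gram sum instead:
\begin{equation*}
\|F\|_{L_2}^2\leqslant \|\varphi\|_\infty^2\sum_{j,k}|B_{3r_j}(x_j)\cap B_{3r_k}(x_k)|.
\end{equation*}
For a fixed $j$ we split the sum over $k$ into the pairs with $r_k\leqslant r_j$ and those with $r_k>r_j$.

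If $r_k\leqslant r_j$ and the dilates meet, then $|x_j-x_k|<6r_j$, so $B_{r_k}(x_k)\subset B_{7r_j}(x_j)$. These small balls are disjoint, hence
\begin{equation*}
\sum_{k:\,r_k\leqslant r_j}|B_{3r_j}(x_j)\cap B_{3r_k}(x_k)|\leqslant \sum_{k} 3^d|B_{r_k}(x_k)|\lesssim r_j^d .
\end{equation*}
The pairs with $r_k>r_j$ are handled by the same bound with the roles of $j$ and $k$ exchanged, since the summand is symmetric in $(j,k)$. Each unordered pair is therefore charged to its smaller-radius member, which gives $\|F\|_2^2\lesssim\sum_j r_j^d$.

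\emph{Proof of \eqref{W^d}.} By Proposition~\ref{BesPr}, $\|F\|_{W^d_2}\asymp\|F\|_{L_2}+\sum_{|\gamma|=d}\|\partial^\gamma F\|_{L_2}$. Moreover
\begin{equation*}
\partial^\gamma\varphi_{x_j,r_j}=r_j^{-d}(\partial^\gamma\varphi)_{x_j,r_j},
\end{equation*}
which is bounded by $C r_j^{-d}$ and supported in $\overline{B}_{3r_j}(x_j)$. Repeating the Gram-sum argument with these weights, the pair $(j,k)$ with $r_k\leqslant r_j$ contributes at most
\begin{equation*}
r_j^{-d}\,r_k^{-d}\,|B_{3r_k}(x_k)|\lesssim r_j^{-d}\leqslant r_k^{-d}.
\end{equation*}
Here summing over the small balls inside a large one is the wrong direction, since it produces an uncontrolled count of small balls.

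The fix is to attach each pair to its smaller ball $k$ instead, with the count taken over the larger balls $j$. Fix $k$. How many $j$ with $r_j\geqslant r_k$ have $B_{3r_j}(x_j)$ meeting $B_{3r_k}(x_k)$? This count is \emph{not} bounded: a point can lie in dilates of infinitely many nested-scale disjoint balls.

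We therefore use the decay in the weights. The contribution of the pair is at most
\begin{equation*}
r_j^{-d}r_k^{-d}r_k^d=r_j^{-d}.
\end{equation*}
Group the $j$ dyadically by $r_j\in[2^m r_k,2^{m+1}r_k)$. Disjointness of the balls $B_{r_j}(x_j)$, all lying within $10\cdot2^{m+1}r_k$ of $x_k$, limits each dyadic group to $O(1)$ members. Hence
\begin{equation*}
\sum_{j} r_j^{-d}\lesssim r_k^{-d}\sum_{m\geqslant0} 2^{-md}\lesssim r_k^{-d}.
\end{equation*}
This geometric summation is the key step and the expected main obstacle; it is exactly the dyadic bounded-multiplicity count above.

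Summing over $k$ gives $\|\partial^\gamma F\|_2^2\lesssim\sum_k r_k^{-d}$. Finally, the $L_2$ part is already bounded by \eqref{L_2}, and $r_j^d\leqslant r_j^{-d}$ because $r_j<1$. This proves \eqref{W^d}.
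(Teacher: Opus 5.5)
Your proof is correct and is essentially the paper's argument. For \eqref{L_2} you bound the Gram sum by charging each pair to the larger ball, using that the disjoint smaller balls meeting it lie in $B_{7r_j}(x_j)$. For \eqref{W^d} you charge each pair to the smaller ball, count $O(1)$ larger balls per dyadic scale, and sum a geometric series, exactly as the paper does; the paper also runs this for lower-order derivatives, getting $r_j^{d-2|\beta|}\leqslant r_j^{-d}$.

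Two cosmetic slips. In the $L_2$ part the pairs are charged to the \emph{larger}-radius member, not the smaller one. Also, the equivalence $\|F\|_{W^d_2}\asymp\|F\|_{L_2}+\sum_{|\gamma|=d}\|\partial^\gamma F\|_{L_2}$ comes from Plancherel, not from Proposition~\ref{BesPr}.
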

	\begin{proof}
		
		Let $\beta$ be a multi-index (we assume that $\beta$ may be zero). We use the notation
		\begin{equation}
			\partial^\beta f=\frac{\partial^\beta f}{\partial x^\beta}.
		\end{equation}
		One may see that
		\begin{equation}\label{prod}
			\int\limits_{\mathbb{R}^d}\partial^\beta\varphi_{x_1,r_1}(y)\partial^\beta\varphi_{x_2,r_2}(y)dy\lesssim \min(r_1,r_2)^dr_1^{-|\beta|}r_2^{-|\beta|}
		\end{equation}
		and
		\begin{equation}
			\int\limits_{\mathbb{R}^d}\partial^\beta\varphi_{x_1,r_1}(y)\partial^\beta\varphi_{x_2,r_2}(y)dy=0 \ \ \ \ \ \text{if } |x_1-x_2|\geqslant 3(r_1+r_2).
		\end{equation}
		First we prove~\eqref{L_2}.
		We can write the inequality
		\begin{equation}
			\int\limits_{\mathbb{R}^d}\varphi_{x_j,r_j}(y)\underset{r_k\leqslant r_j}{\sum\limits_{B_{r_k}(x_k)\in\mathfrak{B}}}\varphi_{x_k,r_k}(y)dy\lesssim
			\underset{B_{r_k}(x_k)\subset B_{7r_j}(x_j)}{\sum\limits_{B_{r_k}(x_k)\in\mathfrak{B}}}r_k^d\lesssim
			r_j^{d}.
		\end{equation}
		We finish the proof of~\eqref{L_2} by the estimate 
		
		\begin{equation}
			\begin{aligned}
				\Big\|\sum\limits_{B_{r_j}(x_j)\in \mathfrak{B}}\varphi_{x_j,r_j} \Big\|^2_{L_2(\mathbb{R}^d)}
				&=
				\int\limits_{\mathbb{R}^d}\Big(\sum\limits_{B_{r_j}(x_j)\in \mathfrak{B}}\varphi_{x_j,r_j}(y)\Big)^2dy\\
				&\leqslant
				2\int\limits_{\mathbb{R}^d}\sum\limits_{B_{r_j}(x_j)\in \mathfrak{B}}\Big(\varphi_{x_j,r_j}(y)\underset{r_k\leqslant r_j}{\sum\limits_{B_{r_k}(x_k)\in\mathfrak{B}}}\varphi_{x_k,r_k}(y)\Big)dy
				\lesssim
				\sum\limits_{B_{r_j}(x_j)\in \mathfrak{B}}r_j^d.
			\end{aligned}
		\end{equation}
		Now we prove~\eqref{W^d}.				
		Firstly we estimate the number of balls $B_{r_k}(x_k)\in\mathfrak{B}$ such that $2^nr_j\leqslant r_k<2^{n+1}r_j$ and $|x_j-x_k|\leqslant 3(r_k+r_j)$. Each such ball is a subset of the ball $B_{14\cdot2^nr_j}(x_j)$, so there exist at most $C=14^d$ such balls. 
		Using~\eqref{prod}, we can write the inequality
		\begin{equation}
			\int\limits_{\mathbb{R}^d}\partial^\beta\varphi_{x_j,r_j}(y)\underset{r_k\geqslant r_j}{\sum\limits_{B_{r_k}(x_k)\in\mathfrak{B}}}\partial^\beta\varphi_{x_k,r_k}(y)dy\lesssim
			r_j^{d-|\beta|}\sum\limits_{n=1}^\infty C(r2^n)^{-\beta}\lesssim r^{d-2|\beta|}.
		\end{equation}
		
		We finish the proof of~\eqref{W^d} by the estimate 
		\begin{equation}
			\begin{aligned}
				\int\limits_{\mathbb{R}^d}\Big(\sum\limits_{B_{r_j}(x_j)\in \mathfrak{B}}\partial^\beta\varphi_{x_j,r_j}(y)\Big)^2dy
				&\leqslant
				2\int\limits_{\mathbb{R}^d}\sum\limits_{B_{r_j}(x_j)\in \mathfrak{B}}\Big(\partial^\beta\varphi_{x_j,r_j}(y)\underset{r_k\geqslant r_j}{\sum\limits_{B_{r_k}(x_k)\in\mathfrak{B}}}\partial^\beta\varphi_{x_k,r_k}(y)\Big)dy\\
				&\lesssim
				\sum\limits_{B_{r_j}(x_j)\in \mathfrak{B}}r_j^{d-2|\beta|}.
			\end{aligned}
		\end{equation}
	\end{proof}
%

	\begin{Cor}\label{corbes}
		Let~$\varphi\in C^\infty_{0}(\mathbb{R}^d)$ be a bounded radially symmetric, radially non-increasing compactly supported function such that~$\supp \varphi\subset\overline{B}_3(0)$ and $0<s<\frac{d}{2}$. Then
		\begin{equation}
			\Big\|\sum\limits_{B_{r_j}(x_j)\in \mathfrak{B}}\varphi_{x_j,r_j} \Big\|_{B^{s}_{2,h}(\mathbb{R}^d)}\lesssim \|\{r_j| \ {B_{r_j}(x_j)\in \mathfrak{B}}\}\|^\frac{d-2s}{2}_{\ell_{d-2s,\frac{h(d-2s)}{2}}}
		\end{equation}
		for any disjoint family of balls~$\mathfrak{B}$  with radii of balls less than $1$.
	\end{Cor}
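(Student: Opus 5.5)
The bound follows by interpolating the two estimates of Lemma~\ref{dd} with Lemma~\ref{strash}, and then identifying the interpolation space via Proposition~\ref{BesPr}. Take $X_0=L_2(\mathbb{R}^d)=W^0_2(\mathbb{R}^d)$ and $X_1=W^d_2(\mathbb{R}^d)$. Both are Banach spaces of distributions, so they are compatible (they sit inside $S'(\mathbb{R}^d)$). By Lemma~\ref{dd}, the hypotheses of Lemma~\ref{strash} hold with
\begin{equation*}
\alpha_0=d>0,\qquad \alpha_1=-d<\alpha_0,\qquad p=2 .
\end{equation*}

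Next choose $\theta=\frac{s}{d}$. Since $0<s<\frac d2$, we have $0<\theta<\frac12<1$. Then
\begin{equation*}
\alpha_\theta=(1-\theta)d+\theta(-d)=d-2s>0,
\end{equation*}
which is exactly the positivity required in Lemma~\ref{strash}. That lemma then gives
\begin{equation*}
\Big\|\sum_{B_{r_j}(x_j)\in\mathfrak{B}}\varphi_{x_j,r_j}\Big\|_{(L_2,W^d_2)_{\theta,h}}\lesssim \|\{r_j\}\|^{\frac{d-2s}{2}}_{\ell_{d-2s,\frac{h(d-2s)}{2}}} .
\end{equation*}
The exponents match the claimed ones, since $\frac{\alpha_\theta}{p}=\frac{d-2s}{2}$ and $\frac{h\alpha_\theta}{p}=\frac{h(d-2s)}{2}$.

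Finally, identify the interpolation space. Apply item~\ref{B3} of Proposition~\ref{BesPr} with $p=2$, $s_0=0$, $s_1=d$. This gives
\begin{equation*}
(W^0_2,W^d_2)_{\theta,h}=B^{(1-\theta)\cdot0+\theta d}_{2,h}=B^{s}_{2,h},
\end{equation*}
with equivalent norms. Substituting this into the previous display yields the corollary.

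No step here is a real obstacle; the argument is a direct combination of earlier results. The only bookkeeping is choosing $\theta$ so that $\alpha_\theta=d-2s$ and checking that it is simultaneously the Besov smoothness parameter. One should also note that Lemma~\ref{strash} is stated for finite families, so the corollary is first obtained for finite $\mathfrak{B}$. Infinite disjoint families then follow by a limiting argument, using Fatou-type lower semicontinuity of the Besov norm under pointwise/distributional convergence of partial sums. If needed, one can instead restrict the statement to finite families, which is all the later applications use.
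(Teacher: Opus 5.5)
Your proposal is correct and matches the paper's proof. The paper also applies Lemma~\ref{strash} with $X_0=L_2$, $X_1=W^d_2$, $\alpha_0=d$, $\alpha_1=-d$, $p=2$ (fed by Lemma~\ref{dd}) and $\theta=s/d$, and then identifies $(L_2,W^d_2)_{\theta,h}=B^{s}_{2,h}$ via \eqref{intS}. Your remark on passing from finite to infinite families is a harmless refinement that the paper leaves implicit; only finite families are needed later.
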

	\begin{proof}
		The corollary follows from Lemmas~\ref{strash},~\ref{dd} and the interpolation formula~\eqref{intS}.
	\end{proof}

	\begin{Th}\label{besres}
		Let $1\leqslant q<\infty$ and $0<s<\frac{d}{2}$. Let $\mu$ be  a measure of bounded variation such that $\mu\in B^{-s}_{2,q}(\mathbb{R}^d)$. Then for every set $A\subset\mathbb{R}^d$ such that $\mathcal{H}_{d-2s,\frac{q'}{2}}(A)<\infty$ we have $\mu(A)=0$.
	\end{Th}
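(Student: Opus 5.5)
The plan is to apply Corollary~\ref{dualeq} with $X=B^{-s}_{2,q}(\mathbb{R}^d)$, $\alpha=d-2s$, and suitable exponents, feeding in the estimate of Corollary~\ref{corbes}.

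First, the duality. For $1\leqslant q<\infty$ the space $B^{-s}_{2,q}$ is a standard Banach space of distributions, since the Schwartz class is dense when $q<\infty$. By~\eqref{B1.1} its dual is $X'=B^{s}_{2,q'}$. If $q=1$, then $q'=\infty$ and the dual is $B^s_{2,\infty}$, which is still the correct dual, with norm given by the same formula. So Corollary~\ref{corbes} applies with $h=q'$, using a smooth radial, radially non-increasing $\varphi$ with $\supp\varphi=\overline{B}_3(0)$. It gives
\begin{equation*}
\Big\|\sum\limits_{B_{r_j}(x_j)\in \mathfrak{B}}\varphi_{x_j,r_j}\Big\|_{X'}\lesssim \|\{r_j\}\|^{\frac{d-2s}{2}}_{\ell_{d-2s,\frac{q'(d-2s)}{2}}}.
\end{equation*}
The requirement $0<s<d/2$ ensures $\alpha=d-2s\in(0,d)$. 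In the case $q'=\infty$ the right-hand side is the $\ell_{\alpha,\infty}$ quasi-norm raised to $\alpha/2$, so the formal bookkeeping of Corollary~\ref{dualeq} still works with exponent $\gamma$ chosen directly.

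Next, match this to the form required by Corollary~\ref{dualeq}. For the Lorentz index take $\tilde q=\frac{q'(d-2s)}{2}$, and set $\gamma=\frac{\alpha}{2\tilde q}=\frac{1}{q'}$. This gives $(\|r\|^{\tilde q}_{\ell_{\alpha,\tilde q}})^{\gamma}=\|r\|^{\alpha/2}_{\ell_{\alpha,\tilde q}}$, which is exactly the bound above. (When $q'=\infty$, interpret this as the $\ell_{\alpha,\infty}$ case, where the conclusion of Lemma~\ref{frost} is read with $\mathcal{H}_{\alpha,\infty}$ and exponent $\alpha/2$; the argument in~\cite{My} adapts.) Corollary~\ref{dualeq} then yields $\mu(A)=0$ for every Borel $A$ with $\mathcal{H}_{\alpha,\tilde q/\alpha}(A)<\infty$, and $\tilde q/\alpha=q'/2$, as claimed.

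The statement is for arbitrary sets $A$, whereas Corollary~\ref{dualeq} needs $A$ Borel. Since $\mathcal{H}_{\alpha,q'/2}$ is an outer measure defined by coverings, enlarge each cover to open sets. This produces a $G_\delta$ set $\tilde A\supset A$ with $\mathcal{H}_{\alpha,q'/2}(\tilde A)=\mathcal{H}_{\alpha,q'/2}(A)<\infty$. Then $|\mu|(\tilde A)=0$, so $\mu$ vanishes on $A$ in the sense of outer measure.

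The main obstacle I expect is the endpoint $q=1$. There the dual space is the non-separable $B^s_{2,\infty}$, and the Lorentz space is $\ell_{\alpha,\infty}$, so the exponent matching in Lemma~\ref{frost} (stated with $\ell_{\alpha,q}^q$) degenerates. My first approach would be to bypass it: note that $B^{-s}_{2,1}\hookrightarrow B^{-s}_{2,q_1}$ for every $q_1>1$, and apply the finite case with $q_1$ slightly larger than $1$. That gives $\mu(A)=0$ whenever $\mathcal{H}_{\alpha,q_1'/2}(A)<\infty$ for some large $q_1'$. By Proposition~\ref{HLp} (item 6, after rescaling or via item 5), finiteness of $\mathcal{H}_{\alpha,\infty}(A)$ should imply finiteness of $\mathcal{H}_{\alpha,q_1'/2}$ only up to $\sigma$-finiteness issues. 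If that fails, I would instead redo the Frostman argument directly with the weak-type norm.
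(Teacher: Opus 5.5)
Your main line is the paper's proof. Duality \eqref{B1.1} gives $X'=B^{s}_{2,q'}$. Corollary~\ref{corbes} with $h=q'$ supplies the bump estimate. Corollary~\ref{dualeq} with $\alpha=d-2s$, Lorentz index $\frac{q'(d-2s)}{2}$ and $\gamma=\frac{1}{q'}$ then gives the conclusion for $\mathcal{H}_{d-2s,\frac{q'}{2}}$.

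For $q=1$ the paper does no more than your parenthetical remark. Lemma~\ref{pok1} is stated for $q=\infty$, and the proof of Lemma~\ref{frost} says this case ``is similar''. This yields $|\mu|(A)\lesssim\mathcal{H}_{d-2s,\infty}(A)^{1/2}$, so $\gamma=\frac12$. The non-separability of $X'=B^{s}_{2,\infty}$ does not matter, since Corollary~\ref{dualeq} only needs $X=B^{-s}_{2,1}$ to be standard.

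Your $G_\delta$-hull remark for non-Borel $A$ is correct. Enlarge each covering set slightly so that its diameter stays in the same $\Delta_k$ and each dyadic sum grows by an arbitrarily small factor. This covers a point the paper passes over, since Corollary~\ref{dualeq} is stated only for Borel sets.

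You should discard the bypass in your last paragraph. It fails, and not merely ``up to $\sigma$-finiteness''. Since $\sup_k x_k\leqslant(\sum_k x_k^{\beta})^{1/\beta}$, one has $\mathcal{H}_{\alpha,\infty}(F,\delta)\leqslant\mathcal{H}_{\alpha,\beta}(F,\delta)^{1/\beta}$. So finiteness of $\mathcal{H}_{\alpha,\infty}$ is the \emph{weakest} of these conditions, and Proposition~\ref{HLp} only transfers finiteness in the opposite direction from the one you need.

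The paper itself refutes the implication you would need. Given $q_1>1$, take $p=\frac{d}{d/2-s}$, so that $\frac{2d}{p}=d-2s$, and take $\beta>\frac{q_1'}{2}$. Theorem~\ref{resL} yields a probability measure $\mu$ with $\hat\mu\in L_{p,q_1}$. By item~\ref{B4} of Proposition~\ref{BesPr}, $\mu\in B^{-s}_{2,q_1}$. This $\mu$ lives on a compact $S$ with $\mathcal{H}_{d-2s,\beta}(S)=0$, and therefore $\mathcal{H}_{d-2s,\infty}(S)=0$.

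Hence no argument that uses only $\mu\in B^{-s}_{2,q_1}$ for some $q_1>1$ can reach the $q=1$ statement. The endpoint genuinely requires the $B^{s}_{2,\infty}$ bump estimate combined with the $\ell_{\alpha,\infty}$ (weak-type) Frostman lemma. That is your stated fallback, and it is the paper's route.
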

	\begin{proof}
		The theorem follows from Corollaries~\ref{dualeq},~\ref{corbes} and dual formula~\eqref{B1.1}.
	\end{proof}

	\begin{Cor}\label{besdi}
		Let $1< q<\infty$ and $0<s<\frac{d}{2}$. Let $S\subset\mathbb{R}^d$ be a compact set such that $S$ is $\sigma$-finite relatively to $\mathcal{H}_{d-2s,\frac{q'}{2}}$. Let $\zeta$ be a distribution such that $\zeta\in B^{-s}_{2,q}(\mathbb{R}^d)$ and $\supp\zeta\subset{S}$. Then $\zeta=0$.
	\end{Cor}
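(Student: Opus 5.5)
The proof reduces the distributional statement to the measure statement of Theorem~\ref{besres}, via Corollary~\ref{eqvi}. Suppose, to the contrary, that $\zeta\neq0$, $\supp\zeta\subset S$ and $\zeta\in B^{-s}_{2,q}$. Since $1<q<\infty$, $0<s$ and $S$ is compact, Corollary~\ref{eqvi} (applied with $p=2$) gives a probability measure $\mu$ with $\supp\mu\subset S$ and $\mu\in B^{-s}_{2,q}$. Such a $\mu$ is in particular a measure of bounded variation, so Theorem~\ref{besres} applies to it.

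Next we use the $\sigma$-finiteness hypothesis. Write
\begin{equation}
S=\bigcup_{n=1}^\infty A_n,\qquad \mathcal{H}_{d-2s,\frac{q'}{2}}(A_n)<\infty .
\end{equation}
We may take each $A_n$ to be Borel. Since $\mathcal{H}_{d-2s,\frac{q'}{2}}$ is a Borel measure for $q'/2\geqslant 1$, and is in any case an outer measure (Proposition~\ref{HLp}), each $A_n$ can be replaced by a Borel hull of the same capacity, for instance a $G_\delta$ obtained from near-optimal covers. Theorem~\ref{besres} then gives $\mu(A_n)=0$ for every $n$. By countable subadditivity, $\mu(S)=0$, which contradicts $\mu(S)=1$. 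Hence $\zeta=0$.

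Most of this is direct bookkeeping. The points that need checking are the following.
\begin{itemize}
\item The hypotheses of Corollary~\ref{eqvi} hold: $1<p=2<\infty$, $1\leqslant q\leqslant\infty$, $s>0$.
\item Theorem~\ref{besres} is stated for arbitrary sets $A$, and the Frostman-type Lemma~\ref{frost} is stated for Borel sets. If Borel sets are needed, we pass to Borel hulls as described above.
\end{itemize}

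The main obstacle is contained in Corollary~\ref{eqvi} itself: passing from a nonzero distribution to a positive measure. That step uses the equivalence $Cap\asymp cap$ from Lemma~\ref{Netrus} together with the duality in Proposition~\ref{dual}. Since both results are already available, the corollary follows immediately.
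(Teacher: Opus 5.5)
Your proposal is correct and follows the paper's own argument. Corollary~\ref{eqvi} turns the nonzero distribution into a probability measure $\mu\in B^{-s}_{2,q}$ supported on $S$, and Theorem~\ref{besres}, applied to each $A_n$ in a decomposition $S=\bigcup_n A_n$ with $\mathcal{H}_{d-2s,\frac{q'}{2}}(A_n)<\infty$, gives $\mu(S)=0$, a contradiction; your passage to Borel hulls (which also works for $q>2$) is a harmless extra precaution the paper leaves implicit.
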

	\begin{proof}
		The corollary follows from Theorem~\ref{besres} and Corollary~\ref{eqvi}.
	\end{proof}
	\begin{Cor}\label{capbes2}
		Let $1< q<\infty$ and $0<s<\frac{d}{2}$. Let $S\subset\mathbb{R}^d$ be a compact set such that $S$ is $\sigma$-finite relatively to $\mathcal{H}_{d-2s,\frac{q}{2}}$. Then $cap(S,B^{s}_{2,q}(\mathbb{R}^d))=0$.
	\end{Cor}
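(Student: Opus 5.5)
Corollary~\ref{capbes2} is the dual form of Corollary~\ref{besdi}, so the plan is to go through Proposition~\ref{dual} and Corollary~\ref{eqvi}.

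We argue by contradiction: assume $cap(S,B^{s}_{2,q})>0$. The space $B^{s}_{2,q}$ with $1<q<\infty$ is a standard Banach space of distributions, since $S(\mathbb{R}^d)$ is dense in it. By the Remark on real versus complex Besov spaces, we may work with real-valued functions. Proposition~\ref{dual} then gives
\begin{equation}
cap(S,B^{s}_{2,q})=cap^*\big(S,(B^{s}_{2,q})'\big)=cap^*(S,B^{-s}_{2,q'}),
\end{equation}
where the last equality is the duality formula~\eqref{B1.1}. Note that $q'\in(1,\infty)$. Positivity of $cap^*$ means there is a probability measure $\mu$ with $\supp\mu\subset S$ and $\mu\in B^{-s}_{2,q'}$.

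Next we apply Theorem~\ref{besres} to $\mu$ with exponent $q'$ in place of $q$. This is legitimate because $1\leqslant q'<\infty$ and $0<s<\frac d2$. The dual exponent of $q'$ is $q$, so every Borel set $A$ with $\mathcal{H}_{d-2s,\frac{q}{2}}(A)<\infty$ satisfies $\mu(A)=0$.

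By hypothesis, $S=\bigcup_n A_n$ with $\mathcal{H}_{d-2s,\frac q2}(A_n)<\infty$. If needed, we replace each $A_n$ by a Borel hull of the same capacity. This is possible because $\mathcal{H}_{\alpha,q}$ is defined through coverings by balls, which may be taken open, and its measurability properties are given in Proposition~\ref{HLp}. Countable subadditivity then gives $\mu(S)\le\sum_n\mu(A_n)=0$. This contradicts $\mu(S)=1$, hence $cap(S,B^s_{2,q})=0$.

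The only delicate point is bookkeeping. We must check that the hypotheses of Proposition~\ref{dual} hold for $X=B^s_{2,q}$, namely that it is standard and that its dual is identified with $B^{-s}_{2,q'}$. We must also check that the exponent swap $q\leftrightarrow q'$ matches the index $\frac q2$ in the statement, and that the sets in the $\sigma$-finite decomposition can be taken Borel. The Borel hull step is the main potential obstacle. If it fails, we would instead apply Lemma~\ref{frost} and Corollary~\ref{dualeq} directly to the outer measure $|\mu|$, which gives $|\mu|(A)\lesssim\mathcal{H}_{d-2s,\frac q2}(A)^{\gamma}$ for arbitrary sets via outer regularity.
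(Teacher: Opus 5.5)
Your proof is correct and uses the same duality argument the paper relies on: Proposition~\ref{dual} with $(B^{s}_{2,q})'=B^{-s}_{2,q'}$ turns $cap(S,B^s_{2,q})>0$ into a probability measure on $S$ lying in $B^{-s}_{2,q'}$, and Theorem~\ref{besres} (equivalently Corollary~\ref{besdi}) with $q'$ in place of $q$ shows that this measure vanishes on every piece of the $\sigma$-finite decomposition. The Borel-hull step is harmless, since replacing the covering sets by their closures keeps every diameter, so each piece lies in a Borel set of no larger $\mathcal{H}_{d-2s,\frac{q}{2}}$-capacity; the fallback you mention is therefore not needed.
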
	
	\begin{Rem}\label{vN}
		The inequality $cap(S,B^{s}_{2,q}(\mathbb{R}^d))\lesssim \mathcal{H}_{d-2s,\frac{q}{2}}(S)$ was proved by Netrusov (see~\cite[Remark to Statement~2.1]{Net}). From this inequality one can obtain Theorem~\ref{Netcap} by using Proposition~\ref{dual}.
	\end{Rem}
	
	\begin{proof}[Proof of Theorems~\ref{nonendmy} and~\ref{merm}]
		Let $\hat{\xi}\in L_{p,q}(\mathbb{R}^d)$ and $\supp \xi\in S$, where $S$ is $\sigma$-finite relatively to $\mathcal{H}_{\frac{2d}{p},\frac{q'}{2}}$. Then, by item~\ref{B4} of Proposition~\ref{BesPr} we have $\xi\in B_{2,q}^{\frac{d}{p}-\frac{d}{2}}$. Therefore by Corollary~\ref{besdi}, $\xi=0$. The proof of Theorem~\ref{merm} is similar, we only need to replace Corollary~\ref{besdi} by Theorem~\ref{besres}.
	\end{proof}

	\section{Examples of measures $\hat{\mu}\in L_{p,q}$}\label{S3}

	The construction follows the lines of the corresponding construction for Lebesgue spaces presented in our previous paper~\cite{Mynotyet}. To generalize to Lorentz spaces, we need to add some technical details.
	In Subsection~\ref{Constr} we provide the construction of $S$ and $\mu$ introduced in Theorem~\ref{resL}.
	In Subsection~\ref{ESC}  we prove that $\mu$ satisfies the relation $\hat{\mu}\in L_{p,q}$.
	
	In this chapter we will use the notation  $\lambda_Q$ for the Lebesgue probability measure on the cube $Q$ and also write $\lambda_0=\lambda_{[0,1]^d}$ for brevity.

	\subsection{A general Cantor--type set}\label{Constr}
	
	 The description naturally splits into two parts. First, we describe general Cantor type sets. Each such set corresponds to a sequence of cubes equipped with a tree structure. The sizes of the cubes are adjusted to fulfill the dimension condition. This construction still has freedom to shift smaller cubes inside larger ones. Second, we randomly choose these shifts to make the Fourier transform of the corresponding measure as small as prescribed by the mathematical expectation. 
	 In this subsection we fix $p$ and $\beta$.
	
	\subsubsection{General construction}
	Let $\mathcal{M}=\{M_0,M_1,...\}$ be an infinite sequence of natural numbers to be specified later. It will be rapidly increasing. For now we only assume that $M_j\geqslant 2$.
	We start with the construction of an infinite tree $\mathcal{T}$. We will inductively construct its subtrees $\mathcal{T}_k$ such that $\mathcal{T}_k\subset\mathcal{T}_{k+1}$ and set $\mathcal{T}=\cup\mathcal{T}_k$.
	
	Let $V(G)$ be the set of vertices of the graph $G$ and let $E(G)$ be the set of edges. Set $V(\mathcal{T}_0)=\{Q_0,Q_1,\dots,Q_{M_0}\}$ and $E(\mathcal{T}_0)=\{(Q_0,Q_1),(Q_0,Q_2),\dots,(Q_0,Q_{M_0})\}$. Here $Q_j$ are some cubes in $\mathbb{R}^d$ to be specified later.
	
	Assume we have constructed $\mathcal{T}_{k-1}$. To build $\mathcal{T}_k$, we add  $M_k$ new vertices to $\mathcal{T}_{k-1}$ and connect them with $Q_k$:
	\begin{align*}
		V(\mathcal{T}_k)&=V(\mathcal{T}_{k-1})\cup\{Q_{M_0+\dots+M_{k-1}+1},Q_{M_0+\dots+M_{k-1}+2},\dots,Q_{M_0+\dots+M_{k-1}+M_k}\},\\
		E(\mathcal{T}_k)&=E(\mathcal{T}_{k-1})\cup\{(Q_k,Q_{M_0+\dots+M_{k-1}+1}),(Q_k,Q_{M_0+\dots+M_{k-1}+2}),\dots,(Q_k,Q_{M_0+\dots+M_{k-1}+M_k})\}.
	\end{align*}
	We will say that $Q_k$ is the parent of those new vertices and that they are kids of $Q_k$.
	
	\begin{figure}[h]
		\center{\includegraphics[scale=0.915]{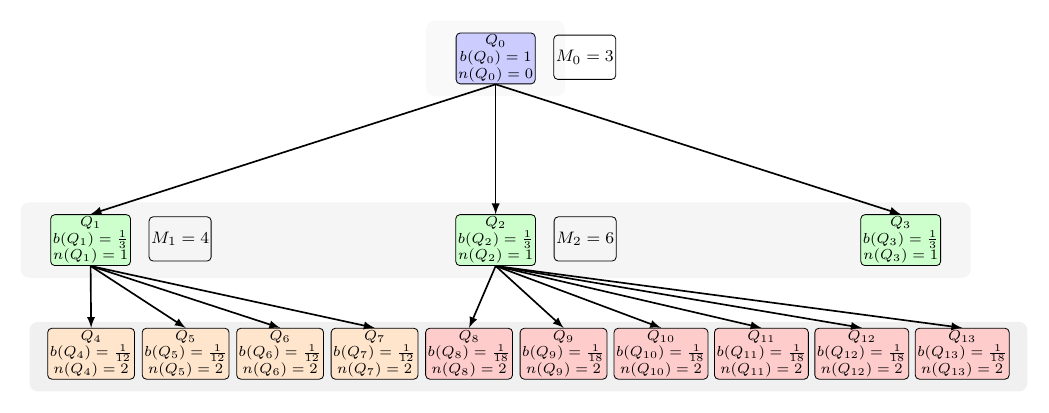}}
		\caption{Tree $\mathcal{T}_2$ for $M_0=3$, $M_1=4$, $M_2=6$}\label{pic2}
	\end{figure}
	We define the weight of the vertex $Q_i$ by the formula 
	\begin{equation}
		b(Q_i)=\!\!\!\!\underset{\text{\tiny{ancestor of }}Q_i}{\prod\limits_{Q_j\text{\tiny{ is an}}}}\!\!\!\! M_j^{-1}.
	\end{equation}

	We say that $Q_k$ belongs to the $n^{\text{th}}$ layer if it has exactly $n$ ancestors. The notation $n(Q_k)$ means the number of the layer of $Q_k$. 
	Clearly,
	\begin{equation}\label{probsum}
		\sum\limits_{\scriptscriptstyle{n(Q_k)=n}}b(Q_k)=1.
	\end{equation}
	Also we have the bound 
	\begin{equation}\label{2^n}
		b(Q_k)\leqslant 2^{-n(Q_k)}.
	\end{equation}
	We remind that $Q_k$ (the vertices) are cubes in $\mathbb{R}^d$. Let $l(Q)$ be the side length of the cube $Q$.
	\begin{Def}
		The cube sequence $\{Q_0,Q_1, \dots\}$  corresponds to the tree $\mathcal{T}$ if it satisfies the requirements:
		\begin{enumerate}[1)]
			\item $Q_0=[0,1]^d$;
			
			\item if $Q_i$ is a parent of $Q_j$, then $Q_j\subset Q_i$;
			
			\item if $Q_j$ is a kid of $Q_k$, then
			\begin{equation}\label{soot}
				l(Q_j)=r_k=\frac{M_k^{-\frac{p}{2d}}b(Q_k)^{\frac{p}{2d\beta}}}{n(Q_k)+1}.
			\end{equation}
		\end{enumerate} 
	\end{Def}
	Note that different kids of a cube may intersect.
	
	Formula~\eqref{soot} relates the size of the cubes and their number. If we consider the similar formula with denominator removed and $\beta=1$, then we will obtain a relation that allows us to say that the limit set has finite $\frac{2d}{p}$-Hausdorff measure. The parameter $\beta$ enables us to move from Hausdorff measure to the Netrusov--Hausdorff capacity. The denominator makes the limit set have zero Netrusov--Hausdorff capacity. See Proposition~\ref{H=0} below for the proof of these claims.
	\begin{Rem}
		Let the sequence $\mathcal{M}$ grow sufficiently fast ($M_k$ is much larger than all previous members of the sequence). Then the inequality $r_{k+1}<\frac{r_k}{2}$ is true for all integer $k$. Indeed if $M_0,\dots,M_{k-1}$ are fixed and $M_k$ tends to infinity, then $r_k\rightarrow 0$.
	\end{Rem}
	
	\begin{Def}
		Assume $\{Q_0,Q_1,\dots\}$  corresponds to $\mathcal{T}$. The set $C_n$ is defined by the formula
		\begin{equation}
			C_n=\underset{\scriptscriptstyle{n(Q_i)=n}}{\bigcup}Q_i.
		\end{equation}
		We call the set $C=\cap_{n=1}^\infty C_n$ the Cantor-type set  corresponding to $\mathcal{T}$.
	\end{Def}
	\begin{St}\label{H=0}
		Let $C$ be a Cantor-type set corresponding to $\mathcal{T}$. Assume $r_{k+1}<\frac{r_k}{2}$ for all $k$. Then $\mathcal{H}_{\frac{2d}{p},\beta}(C)=0$.
	\end{St}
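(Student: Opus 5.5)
The plan is to cover $C$ by the cubes of the $n$-th layer, i.e.\ by $C_n$ itself, estimate the Netrusov--Hausdorff sum for this covering, and show it tends to $0$ as $n\to\infty$. The diameters tend to zero, since by~\eqref{2^n} the side length of a layer-$n$ cube is at most $r_{k}$ with $b(Q_k)\leqslant 2^{-n+1}$. So it suffices to bound
\begin{equation*}
\Sigma_n=\sum_{k}\Big(\sum_{\diam Q_j\in\Delta_k,\ n(Q_j)=n}(\diam Q_j)^{\frac{2d}{p}}\Big)^{\beta}.
\end{equation*}

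First I would group the cubes of layer $n$ by their parents. A layer-$(n-1)$ cube $Q_k$ has $M_k$ kids, each of side $r_k$. Because $r_{k+1}<r_k/2$ and $k\mapsto r_k$ is injective, distinct parents give side lengths lying in distinct dyadic intervals $\Delta_m$ (up to the fixed factor $\sqrt d$ between side length and diameter, which merely shifts indices). Hence each inner sum in $\Sigma_n$ involves the kids of a single parent $Q_k$, and equals
\begin{equation*}
M_k(\sqrt d\, r_k)^{\frac{2d}{p}}\asymp M_k\cdot M_k^{-1}\,b(Q_k)^{\frac1\beta}(n(Q_k)+1)^{-\frac{2d}{p}}=b(Q_k)^{\frac1\beta}\,n^{-\frac{2d}{p}}
\end{equation*}
by~\eqref{soot}, where $n(Q_k)=n-1$.

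Raising to the power $\beta$ and summing over the parents gives
\begin{equation*}
\Sigma_n\lesssim n^{-\frac{2d\beta}{p}}\sum_{n(Q_k)=n-1}b(Q_k)=n^{-\frac{2d\beta}{p}}
\end{equation*}
by~\eqref{probsum}, and this tends to $0$. Therefore $\mathcal{H}_{\frac{2d}{p},\beta}(C,\delta)=0$ for every $\delta$, and so $\mathcal{H}_{\frac{2d}{p},\beta}(C)=0$.

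The definition uses balls, but each cube sits in a ball of comparable diameter, which changes the sums only by constants.

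The main obstacle is the bookkeeping step that different parents really do land in different $\Delta_m$, so that no inner sum mixes two families; this would destroy the factorization, since $\beta$ need not be $\leqslant1$. The hypothesis $r_{k+1}<r_k/2$ gives distinct dyadic scales. If the constant $\sqrt d$ caused two consecutive $r_k$ to share an interval, I would instead note that at most a bounded number of parents share one $\Delta_m$. Then $(a_1+\dots+a_N)^\beta\leqslant N^{\max(\beta,1)}\sum a_i^\beta$ with $N$ bounded, which keeps the estimate up to a constant.
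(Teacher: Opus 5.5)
Your proposal is correct and is essentially the paper's proof. Both cover $C$ by the layer-$n$ cubes and use $r_{k+1}<r_k/2$ so that each dyadic class $\Delta_l$ contains the kids of a single parent; then~\eqref{soot} and~\eqref{probsum} give the bound $n^{-2d\beta/p}\to0$. Your fallback for the factor $\sqrt d$ is not needed: multiplying all side lengths by the same constant preserves $\sqrt d\,r_{k+1}<\sqrt d\,r_k/2$, so distinct parents still land in distinct classes $\Delta_l$.
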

	\begin{proof}
		The cubes of the $n^\text{th}$ layer provide a covering of $C$. We use those coverings to estimate the Netrusov--Hausdorff capacity of $C$:
		\begin{equation}
			\begin{aligned}
				\sum\limits_{l\in \mathbb{Z}}\Big(\underset{\diam Q_k\in\Delta_l}{\sum\limits_{\scriptscriptstyle{n(Q_k)=n}}}(\diam Q_k)^{\frac{2d}{p}}\Big)^\beta
				&\lesssim
				\sum\limits_{l\in \mathbb{Z}}\Big(\underset{\diam Q_k\in\Delta_l}{\sum\limits_{\scriptscriptstyle{n(Q_k)=n}}}l(Q_k)^{\frac{2d}{p}}\Big)^\beta\\
				&\!\!\!\!\!\!\!\overset{r_{k+1}< \frac{r_k}{2}}{=}
				\sum\limits_{\scriptscriptstyle{n(Q_k)=n-1}}M_k^{\beta}r_k^{\frac{2d\beta}{p}}
				\overset{\scriptscriptstyle{\text{\eqref{soot}}}}{=}
				\sum\limits_{\scriptscriptstyle{n(Q_k)=n-1}}\frac{b(Q_k)}{n^{\frac{2d\beta}{p}}}\ 
				\overset{\scriptscriptstyle{\eqref{probsum}}}{=}\ 
				\frac{1}{n^{\frac{2d\beta}{p}}}.
			\end{aligned}
		\end{equation}
		This quantity tends to zero as $n\rightarrow \infty$.
	\end{proof}
	To each Cantor-type set $C$, we will assign a probability measure $\mu$ such that $\supp\mu\subset C$. 	
	Denote the measure $\mu_k$ by the formula:
	\begin{equation}
		\mu_k=\sum\limits_{i=k}^{\scriptscriptstyle{M_0+\dots+M_{k-1}}}b(Q_i)\lambda_{Q_i}.
	\end{equation}
	For example, if $M_0=3$, $M_1=4$, $M_2=6$ (like in Figure~\ref{pic2}) 
	\begin{equation*}
		\mu_0=\lambda_{0},
	\end{equation*} 
	\begin{equation*}
		\mu_1=\frac{1}{3}(\lambda_{Q_1}+\lambda_{Q_2}+\lambda_{Q_3}),
	\end{equation*}
	\begin{equation*}
		\mu_2=\frac{1}{3}(\lambda_{Q_2}+\lambda_{Q_3})+\frac{1}{12}(\lambda_{Q_4}+\lambda_{Q_5}+\lambda_{Q_6}+\lambda_{Q_7}),
	\end{equation*}
	\begin{equation*}
		\mu_3=\frac{1}{3}\lambda_{Q_3}+\frac{1}{12}(\lambda_{Q_4}+\lambda_{Q_5}+\lambda_{Q_6}+\lambda_{Q_7})+\frac{1}{18}(\lambda_{Q_8}+\lambda_{Q_9}+\lambda_{Q_{10}}+\lambda_{Q_{11}}+\lambda_{Q_{12}}+\lambda_{Q_{13}}).
	\end{equation*}
	
	These measures  satisfy recurrence relations:
	\begin{equation}
		\mu_{0}=\lambda_{[0,1]^d},
	\end{equation}
	\begin{equation}\label{nonrandomrel}
		\mu_{k+1}=\mu_{k}-b(Q_k)\lambda_{Q_k}+\frac{b(Q_k)}{M_k}\underset{\text{\tiny{kid of }}Q_k}{\sum\limits_{Q_j \text{\tiny{ is a}}}}\lambda_{Q_j}.
	\end{equation}
	The measure $\mu$ is the weak* limit of the measures $\mu_k$. 
	\subsubsection{Random construction}
	The behavior of the Fourier transform of a Cantor measure can be quite chaotic (see~\cite{Strichartz} for examples). In this subsection, we will use randomization to choose a representative for which we can estimate $\hat{\mu}$. Note we have not specified the disposition of the ancestor cubes inside the parental cube. Now it is time to do that.

	Let $S_v\mu$ is a shift of a measure $\mu$ by a vector $v$. We mean that $S_v\mu(A)=\mu(A-v)$ for any Borel set $A$.
	\begin{Def}
		Let $M\in\mathbb{N}$ and let $0<r<\frac{1}{2}$. Let $\mu_{M,r}$ be the random variable taking values in the set of probability measures:
		\begin{equation}
			\mu_{M,r}=\frac{1}{M}\sum\limits_{j=1}^{M}S_{v_j}\lambda_{[0,r]^d}.
		\end{equation}
		Here $\{v_j\}_{j=1}^{M}$ is a sequence of independent vectors that are uniformly distributed on the cube $[0,1-r]^d$. 
	\end{Def}
	Note that different shifts of a cube $[0,r]^d$ may intersect.

	Let $f_r$ be the piecewise linear function (see Figure~\ref{pic1}) 
	
	\begin{equation}
		f_r(t)=
		\begin{cases}
			0, & t\in (-\infty,0]\cup[1,\infty);\\
			\frac{t}{r(1-r)}, & t\in [0,r];\\
			\frac{1}{1-r}, & t\in [r,1-r];\\
			\frac{1-t}{r(1-r)}, & t\in [1-r,1]. 
		\end{cases}
	\end{equation}
	\begin{figure}[h]
		\center{\includegraphics[scale=0.5]{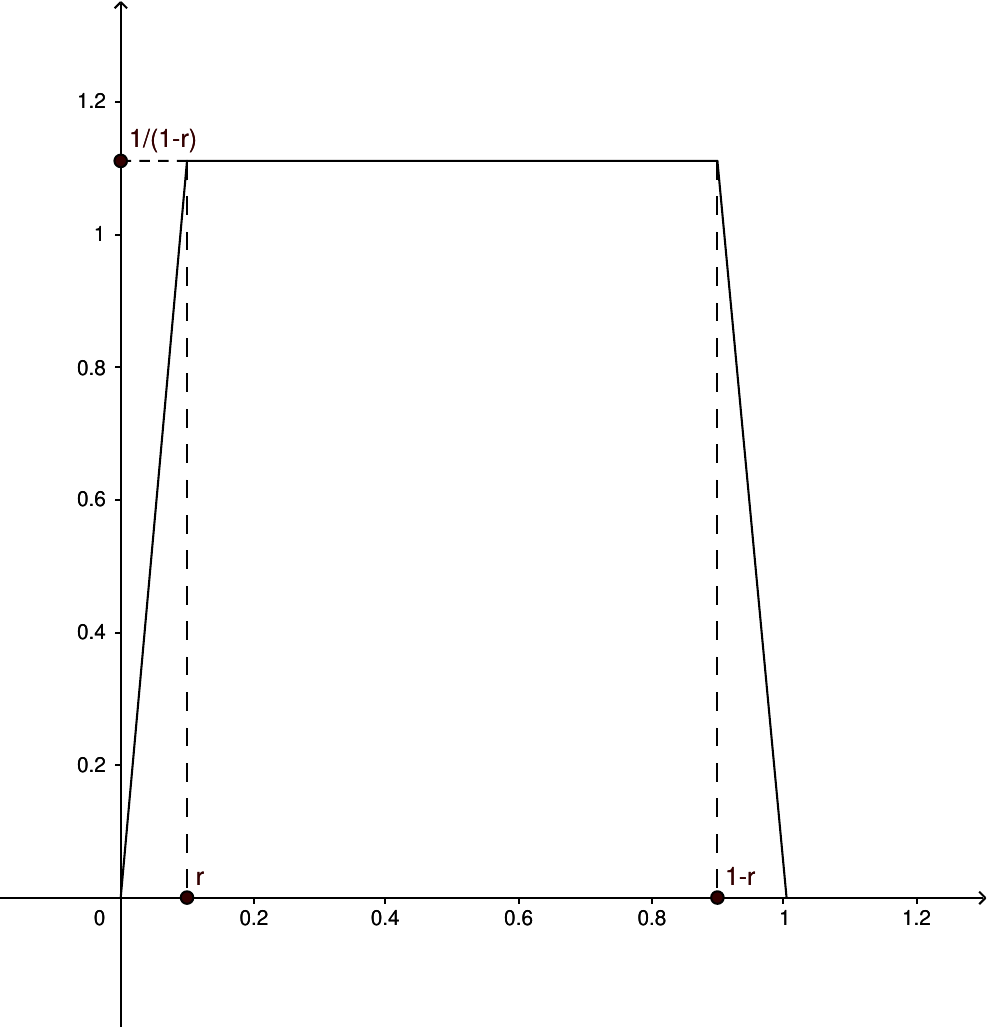}}
		\caption{Graph of $f_r$ for $r=0.1$}\label{pic1}
	\end{figure}
	
	Let $F_r(x)=\prod\limits_{i=1}^df_r(x_i)$ (here $x=(x_1,\dots,x_d)$). Then,
	\begin{equation}\label{mathex}
		\mathbb{E}\mu_{M,r}=\lambda_{[0,r]^d}*\lambda_{[0,1-r]^d}=F_r(x)dx
	\end{equation}
	and  
	
	\begin{equation}\label{glav}
		\hat{\mu}_{M,r}(x)\overset{D}{=}\frac{1}{M}\sum_{j=1}^{M}e^{-2\pi i<\beta_{r,j},x>}\hat{\lambda}_0(xr),
	\end{equation}

	Where $\{\beta_{r,j}\}_{j=1}^{M}$ is the sequence of independent vectors that are uniformly distributed on the cube $[0,1-r]^d$. The notation $\overset{D}{=}$ means equality of distributions.
	
	Let $\nu_{M,r}=\mu_{M,r}(\omega)$ be the value of $\mu_{M,r}$  at some point $\omega$ of the probability space to be chosen later (see Corollary~\ref{PrE}). The choice of $\omega$ defines the disposition of smaller cubes inside the larger one. By definition, $\nu_{M,r}$ is a probability measure.
	
	We will use the measures $\nu_{M,r}$ to construct the sequences $\{Q_0,\dots,Q_{M_0+\dots+M_k}\}$ of cubes inductively. Set $Q_0=[0,1]^d$. The measure $\nu_{M_0,r_0}$ corresponds to $M_0$  cubes. The sequence $\{Q_1,\dots,Q_{M_0}\}$ consists of those cubes.
	
	Assume we have constructed the cubes $Q_0,Q_1,\dots,Q_{M_0+M_1+\dots+M_{k-1}}$. Let $T_{Q_k}$ be the homothety  that maps $[0,1]^d$ onto $Q_k$. If $Q_k=y+[0,l(Q_k)]^d$, then $T_{Q_k}(x)=y+l(Q_k)x$.
	Let 
	\begin{equation}
		\nu_{M_k,\frac{r_k}{l(Q_k)},Q_k}
		=
		T_{Q_k}(\nu_{M_k,\frac{r_k}{l(Q_k)}}),
	\end{equation}
	we mean that
	\begin{equation}
		\nu_{M_k,\frac{r_k}{l(Q_k)},Q_k}(A)
		=
		\nu_{M_k,\frac{r_k}{l(Q_k)}}(T^{-1}_{Q_k}(A)),
	\end{equation}
	for any Borel set $A$.
	The measure $\nu_{M_k,\frac{r_k}{l(Q_k)},Q_k}$ corresponds to $M_k$  cubes (see Figure~\ref{pic}). Add them to the end of the cube sequence. Thus, we have constructed the cubes $Q_{M_0+M_1+\dots+M_{k-1}+1},\dots,Q_{M_0+M_1+\dots+M_{k}}$.

	\begin{figure}[h]
		\center{\includegraphics[scale=0.5]{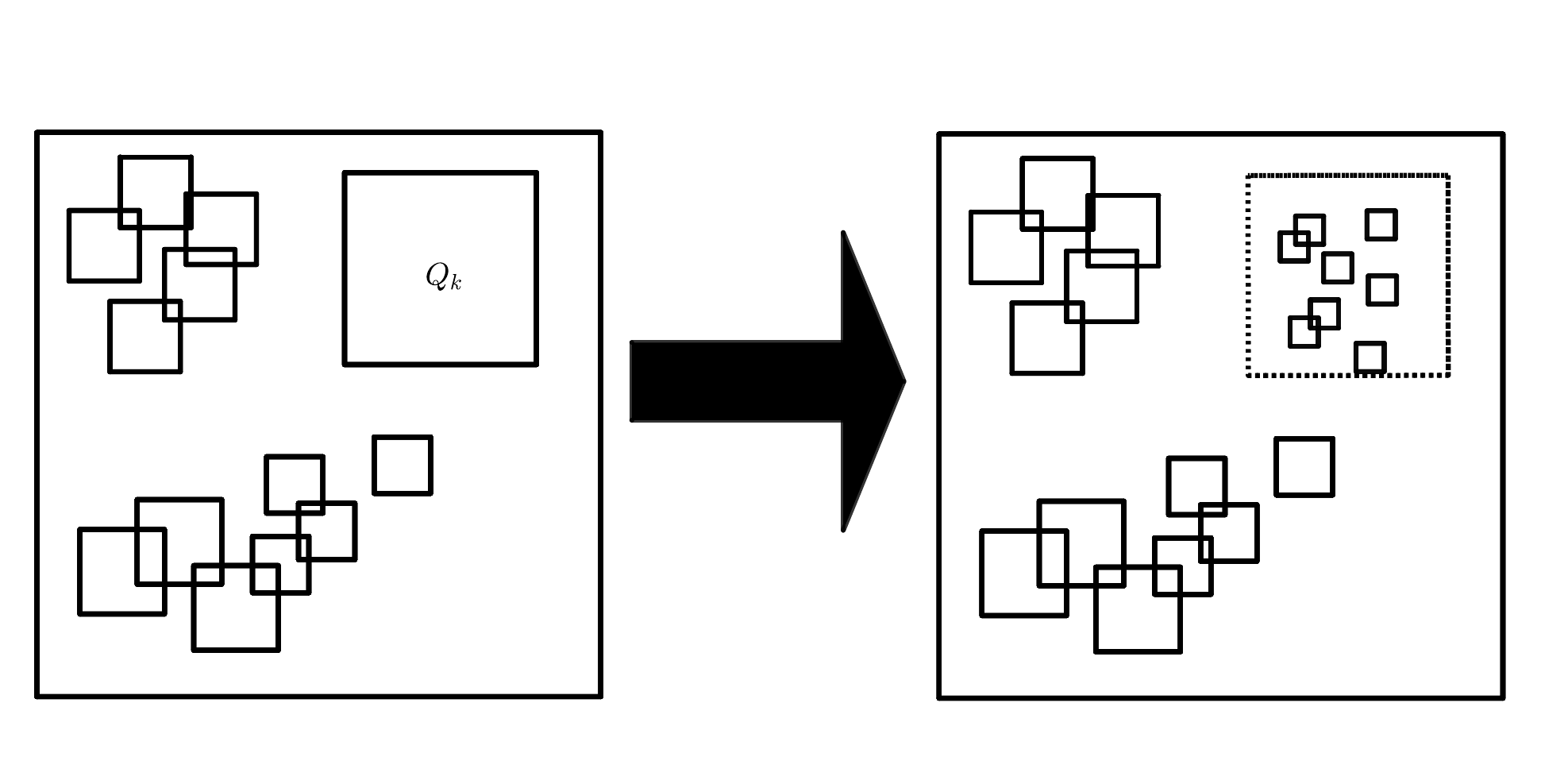}}
		\caption{Construction}\label{pic}
	\end{figure}
	
	\begin{Rem}
		The cube sequence $\{Q_0,Q_1,\dots\}$ corresponds to a tree $\mathcal{T}$.
	\end{Rem}
	
	For this cube sequence the recurrence relations~\eqref{nonrandomrel} turn into
	
	\begin{equation}\label{formz}
		\mu_{k+1}=\mu_{k}-b(Q_k)\lambda_{Q_k}+b(Q_k)\nu_{M_k,\frac{r_k}{l(Q_k)},Q_k}.
	\end{equation}

	\subsection{Estimation of example}\label{ESC}

	The following lemma was proved in~\cite[Lemma~3.2]{Mynotyet}.
	\begin{Le}\label{Np}
		For all $M\in \mathbb{N}$, $r<\frac{1}{2}$ and $p>1$ the inequality
		\begin{equation}
			\int\limits_{\mathbb{R}^d}\mathbb{E}|\hat{\mu}_{M,r}(x)-\mathbb{E}\hat{\mu}_{M,r}(x)|^pdx\lesssim M^{-\frac{p}{2}}r^{-d}
		\end{equation}
		is true.
	\end{Le}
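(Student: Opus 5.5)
We need to prove Lemma~\ref{Np}: for $p>1$,
\begin{equation*}
\int_{\mathbb{R}^d}\mathbb{E}|\hat\mu_{M,r}(x)-\mathbb{E}\hat\mu_{M,r}(x)|^p\,dx\lesssim M^{-p/2}r^{-d}.
\end{equation*}
The plan is to first reduce to a sum of independent centered random variables. By~\eqref{glav},
\begin{equation*}
\hat\mu_{M,r}(x)-\mathbb{E}\hat\mu_{M,r}(x)\overset{D}{=}\hat\lambda_0(xr)\,\frac1M\sum_{j=1}^M X_j(x),\qquad X_j(x)=e^{-2\pi i\langle\beta_{r,j},x\rangle}-\mathbb{E}e^{-2\pi i\langle\beta_{r,j},x\rangle}.
\end{equation*}
The $X_j(x)$ are i.i.d., centered, and satisfy $|X_j|\le 2$.

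Next I bound the inner expectation for each fixed $x$. For $p\ge2$, the Marcinkiewicz--Zygmund (or Rosenthal) inequality gives
\begin{equation*}
\mathbb{E}\Big|\sum_j X_j\Big|^p\lesssim \mathbb{E}\Big(\sum_j|X_j|^2\Big)^{p/2}\le (4M)^{p/2}.
\end{equation*}
For $1<p<2$, Jensen/H\"older gives
\begin{equation*}
\mathbb{E}\Big|\sum_j X_j\Big|^p\le\Big(\mathbb{E}\Big|\sum_j X_j\Big|^2\Big)^{p/2}\le(4M)^{p/2}.
\end{equation*}
In both cases $\mathbb{E}|\hat\mu_{M,r}-\mathbb{E}\hat\mu_{M,r}|^p\lesssim M^{-p/2}|\hat\lambda_0(xr)|^p$, with a constant depending only on $p$.

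It then remains to integrate in $x$. Substituting $y=rx$ gives
\begin{equation*}
\int_{\mathbb{R}^d}|\hat\lambda_0(xr)|^p\,dx=r^{-d}\int_{\mathbb{R}^d}|\hat\lambda_0(y)|^p\,dy.
\end{equation*}
This is where I expect the only real obstacle: $\hat\lambda_0(y)=\prod_{i=1}^d\frac{\sin\pi y_i}{\pi y_i}$ (up to a phase) lies in $L_p(\mathbb{R}^d)$ only for $p>1$. This holds because it is a tensor product of one-dimensional sinc functions, each bounded by $\min(1,1/|y_i|)$, which is $L_p(\mathbb{R})$-integrable exactly when $p>1$. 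Hence the integral is a finite constant, and the stated bound follows.
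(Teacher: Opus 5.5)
Your proof is correct and complete. The identity $\hat\mu_{M,r}(x)-\mathbb{E}\hat\mu_{M,r}(x)=\hat\lambda_0(rx)\,\frac1M\sum_j X_j(x)$ holds exactly. The bound $\mathbb{E}\bigl|\frac1M\sum_j X_j(x)\bigr|^p\lesssim_p M^{-p/2}$ follows, uniformly in $x$, from Marcinkiewicz--Zygmund (applied to the real and imaginary parts) when $p\geqslant 2$, and from Jensen together with the orthogonality of the independent centered $X_j$ when $1<p<2$. Rescaling and $\hat\lambda_0\in L_p$ for $p>1$ then give the factor $r^{-d}$.

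The paper does not reprove this lemma; it quotes it from \cite[Lemma~3.2]{Mynotyet}. So there is no in-paper argument to compare against. Your argument is the natural one suggested by the form of the bound, $M^{-p/2}r^{-d}\asymp M^{-p/2}\|\hat\lambda_0(r\,\cdot)\|_{L_p}^p$, and it works as a self-contained replacement for that citation.
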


	Fix $p_1$ and $p_2$ such that $p_1>p>p_2$.
	
	\begin{Cor}\label{PrE}
		For all $M\in\mathbb{N}$ and $0<r<\frac{1}{2}$ we can choose measures $\nu_{M,r}=\mu_{M,r}(\omega)$ such that
		
		\begin{equation}
			\int\limits_{\mathbb{R}^d}|\hat{\nu}_{M,r}(x)-\mathbb{E}\hat{\mu}_{M,r}(x)|^{p_1}dx\lesssim M^{-\frac{p_1}{2}}r^{-d},
		\end{equation}
		\begin{equation}
			\int\limits_{\mathbb{R}^d}|\hat{\nu}_{M,r}(x)-\mathbb{E}\hat{\mu}_{M,r}(x)|^{p_2}dx\lesssim M^{-\frac{p_2}{2}}r^{-d}.
		\end{equation}
	\end{Cor}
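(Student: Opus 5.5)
The plan is to deduce the statement from Lemma~\ref{Np} by a first-moment (Chebyshev/Markov) argument applied to the two exponents $p_1$ and $p_2$ at once. Fix $M$ and $r<\frac12$, and consider the two nonnegative random variables
\begin{equation*}
X_i(\omega)=\int\limits_{\mathbb{R}^d}|\hat{\mu}_{M,r}(\omega)(x)-\mathbb{E}\hat{\mu}_{M,r}(x)|^{p_i}dx,\qquad i=1,2.
\end{equation*}
By Tonelli's theorem, $\mathbb{E}X_i$ equals the left-hand side of Lemma~\ref{Np} with $p$ replaced by $p_i$. The integrand is jointly measurable in $(\omega,x)$: by~\eqref{glav} it is a continuous function of the finitely many shifts $v_j$ and of $x$. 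Since $p_1,p_2>1$, the lemma applies to both exponents. It gives
\begin{equation*}
\mathbb{E}X_i\leqslant C_i M^{-\frac{p_i}{2}}r^{-d},
\end{equation*}
where $C_i$ depends only on $p_i$ and $d$.

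Next we apply Markov's inequality. It gives
\begin{equation*}
\mathbb{P}\big(X_i>3C_iM^{-\frac{p_i}{2}}r^{-d}\big)\leqslant \frac13 .
\end{equation*}
Hence the union of the two bad events has probability at most $\frac23<1$. So there exists $\omega$ outside both bad events, and we set $\nu_{M,r}=\mu_{M,r}(\omega)$. For this $\omega$, both inequalities of the corollary hold with constants $3C_1$ and $3C_2$. These constants are independent of $M$ and $r$, as the notation $\lesssim$ requires.

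The only point needing care is that a single $\omega$ must work for both exponents simultaneously. Applying the lemma separately and picking an $\omega$ for each exponent would not suffice, because we need one measure satisfying both bounds; the union bound handles exactly this. The implicit constants must also not depend on $M$ or $r$, which holds because Markov's inequality only introduces the absolute factor $3$. Beyond this, the argument is routine once Lemma~\ref{Np} is granted.
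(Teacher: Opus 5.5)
Your proposal is correct and matches the argument the paper leaves implicit; the paper states Corollary~\ref{PrE} without proof right after Lemma~\ref{Np}. Tonelli turns Lemma~\ref{Np} into $\mathbb{E}X_i\lesssim M^{-p_i/2}r^{-d}$ for $i=1,2$, and Markov's inequality with a union bound gives a single $\omega$ that works for both exponents (equivalently, $X_1/(C_1M^{-p_1/2}r^{-d})+X_2/(C_2M^{-p_2/2}r^{-d})$ has expectation at most $2$). The only tacit point, which the paper shares, is that $p_2$ must be fixed with $p_2>1$; this is possible since $p>2$.
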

	
	\begin{Le}\label{ooo}
		Let $p>2$. Then the inequality
		\begin{equation}
			\|\hat{\lambda}_0-\mathbb{E}\hat{\mu}_{M,r}\|_{L_{p}}\lesssim r^{\frac{1}{p'}}
		\end{equation}
		is true.
	\end{Le}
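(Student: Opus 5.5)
We compute explicitly. By \eqref{mathex}, $\mathbb{E}\mu_{M,r}=\lambda_{[0,r]^d}*\lambda_{[0,1-r]^d}$, so
\begin{equation*}
\mathbb{E}\hat{\mu}_{M,r}(x)=\hat{\lambda}_0(rx)\,\hat{\lambda}_0((1-r)x).
\end{equation*}
Everything factorizes over coordinates, since $\hat{\lambda}_0(x)=\prod_{i=1}^d m(x_i)$ with $m(t)=\frac{1-e^{-2\pi i t}}{2\pi i t}$, and similarly $\mathbb{E}\hat{\mu}_{M,r}(x)=\prod_i m(rx_i)m((1-r)x_i)$. The plan is to write the difference of two $d$-fold products as a telescoping sum:
\begin{equation*}
\prod_{i=1}^d a_i-\prod_{i=1}^d b_i=\sum_{j=1}^d\Big(\prod_{i<j}b_i\Big)(a_j-b_j)\Big(\prod_{i>j}a_i\Big),
\end{equation*}
with $a_i=m(x_i)$ and $b_i=m(rx_i)m((1-r)x_i)$. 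Each term is a tensor product of one-dimensional functions, so its $L_p(\mathbb{R}^d)$ norm is the product of one-dimensional $L_p$ norms. Since $p>2>1$, we have $\|m\|_{L_p(\mathbb{R})}<\infty$. Also $|m(rt)|\le 1$, and $\|m((1-r)\cdot)\|_{L_p}\asymp 1$ for $r<1/2$, so $\|b_i\|_{L_p}\lesssim 1$. Hence it suffices to prove the one-dimensional estimate
\begin{equation*}
\|m(t)-m(rt)m((1-r)t)\|_{L_p(\mathbb{R})}\lesssim r^{1/p'}.
\end{equation*}

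For this one-dimensional estimate, split
\begin{equation*}
m(t)-m(rt)m((1-r)t)=\big(m(t)-m((1-r)t)\big)+m((1-r)t)\big(1-m(rt)\big).
\end{equation*}
For the first term, we use the bounds $|m'(s)|\lesssim \min(1,|s|^{-1})$ and $|m(s)|\lesssim \min(1,|s|^{-1})$.
\begin{itemize}
\item For $|t|\le 1/r$, the mean value theorem gives $|m(t)-m((1-r)t)|\lesssim r|t|\min(1,|t|^{-1})\lesssim r$, so this region contributes $r^p\cdot r^{-1}=r^{p-1}$ to the $p$-th power of the norm.
\item For $|t|>1/r$, each of the two pieces is $\lesssim|t|^{-1}$, contributing $\int_{1/r}^\infty t^{-p}\,dt\asymp r^{p-1}$.
\end{itemize}

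For the second term, we have $|1-m(rt)|\lesssim\min(1,r|t|)$ and $|m((1-r)t)|\lesssim\min(1,|t|^{-1})$.
\begin{itemize}
\item For $|t|\le 1$ the integrand is $\lesssim r^p$.
\item For $1<|t|<1/r$ it is $\lesssim r^p$ (since $r|t|\cdot|t|^{-1}=r$), over a length $\lesssim 1/r$, giving $r^{p-1}$.
\item For $|t|>1/r$ it is $\lesssim|t|^{-p}$, again giving $r^{p-1}$.
\end{itemize}

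Altogether the $p$-th power of the one-dimensional norm is $\lesssim r^{p-1}$. Taking the $p$-th root gives $r^{(p-1)/p}=r^{1/p'}$, as required.

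The only delicate point is the bookkeeping that keeps $r^{p}$ from dominating, which uses $r<1$, and the uniformity of the constants in $r<1/2$. The hypothesis $p>2$ is more than enough here: all we actually need is $p>1$, for integrability of $|t|^{-p}$ at infinity.
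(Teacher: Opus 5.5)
Your argument is correct, but it works entirely on the Fourier side, whereas the paper's proof stays on the physical side.

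The paper reads off from \eqref{mathex} that the density difference $\chi_{[0,1]^d}-F_r$ has two pieces. On $[r,1-r]^d$ it equals $1-(1-r)^{-d}=O(r)$. Elsewhere it is bounded and supported on the boundary layer $[0,1]^d\setminus[r,1-r]^d$, which has measure $O(r)$. Hence $\|\lambda_0-\mathbb{E}\mu_{M,r}\|_{L_{p'}}^{p'}\lesssim r^{p'}+r\lesssim r$. The Hausdorff--Young inequality, valid since $1<p'<2$, then gives $\|\hat{\lambda}_0-\mathbb{E}\hat{\mu}_{M,r}\|_{L_p}\leqslant\|\lambda_0-\mathbb{E}\mu_{M,r}\|_{L_{p'}}\lesssim r^{1/p'}$. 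This is a two-line argument that needs no explicit formula for $\hat{\lambda}_0$ and no coordinate-wise bookkeeping, but it genuinely uses $p\geqslant 2$.

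Your route is longer and leans on the product structure of cubes. It uses the explicit one-dimensional symbol $m$, a telescoping sum over coordinates, and a frequency split at $|t|=1$ and $|t|=1/r$. In exchange it is self-contained and, as you note, yields the same bound for every $p>1$, a range where Hausdorff--Young is unavailable. It also makes the source of the exponent visible: the difference has size about $r$ on $|t|\lesssim 1/r$ and is controlled by the $|t|^{-1}$ decay beyond that.

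For the paper's purposes the extra range is not needed. The lemma is only used with $p>2$ and then interpolated in Corollary~\ref{leb}.
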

	\begin{proof}
		One may see from~\eqref{mathex} that $\|\lambda_0-\mathbb{E}\mu_{M,r}\|_{L_{p'}}\lesssim r^{\frac{1}{p'}}$. The Hausdorff--Young inequality finishes the proof.
	\end{proof}
		
	\begin{Cor}\label{leb}
		Let $p>2$. Then the inequalities
		\begin{equation}\label{il}
			\|\hat{\lambda}_0-\mathbb{E}\hat{\mu}_{M,r}\|_{L_{p,q}}\lesssim r^{\frac{1}{p'}},
		\end{equation}
		\begin{equation}\label{pq}
			\|\hat{\nu}_{M,r}-\mathbb{E}\hat{\mu}_{M,r}\|_{L_{p,q}}\lesssim M^{-\frac{1}{2}}r^{-\frac{d}{p}}
		\end{equation}
		are true.
		Here $\nu_{M,r}$ is defined by the choice of $\omega$ in Corollary~\ref{PrE}.
	\end{Cor}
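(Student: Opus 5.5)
The plan is to obtain both inequalities by real interpolation between two Lebesgue exponents on either side of $p$, using item~\ref{I4} of Proposition~\ref{inter}: $(L_{p_2},L_{p_1})_{\theta,q}=L_{p,q}$ with $\frac1p=\frac{1-\theta}{p_2}+\frac{\theta}{p_1}$. Item~\ref{I1} then gives, for any function $g$ lying in both spaces,
\begin{equation*}
\|g\|_{L_{p,q}}\lesssim\|g\|_{L_{p_2}}^{1-\theta}\|g\|_{L_{p_1}}^{\theta}.
\end{equation*}
So it suffices to bound the $L_{p_1}$ and $L_{p_2}$ norms and check that the exponents combine correctly. In this subsection $p_1>p>p_2$ are fixed, and we may assume $p_2>2$ by choosing them close to $p$.

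For~\eqref{il}, apply Lemma~\ref{ooo} with $p_1$ and with $p_2$, both larger than $2$. This gives $\|\hat\lambda_0-\mathbb{E}\hat\mu_{M,r}\|_{L_{p_i}}\lesssim r^{1/p_i'}$. Interpolating yields the bound $r^{(1-\theta)/p_2'+\theta/p_1'}$. Since $\frac1{p_i'}=1-\frac1{p_i}$ is affine in $\frac1{p_i}$, this exponent is exactly $1-\frac1p=\frac1{p'}$.

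For~\eqref{pq}, use the specific $\omega$ chosen in Corollary~\ref{PrE}. It gives $\|\hat\nu_{M,r}-\mathbb{E}\hat\mu_{M,r}\|_{L_{p_i}}\lesssim M^{-1/2}r^{-d/p_i}$ for $i=1,2$. Interpolating gives $M^{-1/2}r^{-d((1-\theta)/p_2+\theta/p_1)}=M^{-1/2}r^{-d/p}$.

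The computations are routine; the step that needs care is the choice of interpolation. We cannot pass directly from the expectation estimate of Lemma~\ref{Np} to the Lorentz norm, because the Lorentz quasi-norm is not an integral of a power of $|\cdot|$. This is why Corollary~\ref{PrE} fixes one $\omega$ that works for both $p_1$ and $p_2$ at once. That requires a Chebyshev argument in which each of the two events fails with probability less than $1/2$, which is already provided. The implicit constants depend only on $p,p_1,p_2,q,d$.
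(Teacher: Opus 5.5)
Your proposal is correct and follows the paper's own argument: bound the $L_{p_1}$ and $L_{p_2}$ norms via Corollary~\ref{PrE} (for \eqref{pq}) and Lemma~\ref{ooo} (for \eqref{il}), then interpolate with items~\ref{I1} and~\ref{I4} of Proposition~\ref{inter}; the exponents combine correctly because they are affine in $1/p_i$. You also make explicit the requirement $p_2>2$, which Lemma~\ref{ooo} (via Hausdorff--Young) needs and the paper leaves implicit.
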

	\begin{proof}
		By Corollary~\ref{PrE}, we have
		\begin{equation}
			\|\hat{\nu}_{M,r}-\mathbb{E}\hat{\mu}_{M,r}\|_{L_{p_1}}\lesssim M^{-\frac{1}{2}}r^{-\frac{d}{p_1}},
		\end{equation}
		\begin{equation}
			\|\hat{\nu}_{M,r}-\mathbb{E}\hat{\mu}_{M,r}\|_{L_{p_2}}\lesssim M^{-\frac{1}{2}}r^{-\frac{d}{p_2}}.
		\end{equation}
		Items~\ref{I1} and~\ref{I4} of Proposition~\ref{inter} finish proof of~\eqref{pq}. The inequality~\eqref{il} is derived from Lemma~\ref{ooo} in a similar way.
	\end{proof}
	
	\begin{Cor}\label{cor1}
		We have the inequalities
		\begin{equation}
			\|\hat{\nu}_{M,r}-\hat{\lambda}_0\|^q_{L_{p,q}}\lesssim M^{-\frac{q}{2}}r^{-\frac{qd}{p}} + r^{\frac{q}{p'}},
		\end{equation}
		\begin{equation}
			\|\hat{\nu}_{M,r}-\hat{\lambda}_0\|^{p_1}_{L_{p_1}}\lesssim M^{-\frac{p_1}{2}}r^{-d}+r^{\frac{p_1}{p_1'}}.
		\end{equation}
		Here $\nu_{M,r}$ is defined by the choice of $\omega$ in Corollary~\ref{PrE} and $p_1$ was fixed before.
	\end{Cor}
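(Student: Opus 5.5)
The plan is to write $\hat{\nu}_{M,r}-\hat{\lambda}_0$ as the sum of two pieces,
\begin{equation*}
\hat{\nu}_{M,r}-\hat{\lambda}_0=(\hat{\nu}_{M,r}-\mathbb{E}\hat{\mu}_{M,r})+(\mathbb{E}\hat{\mu}_{M,r}-\hat{\lambda}_0),
\end{equation*}
and to estimate each piece separately, using the bounds already established in Corollaries~\ref{PrE} and~\ref{leb} and Lemma~\ref{ooo}.

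For the Lorentz estimate, $L_{p,q}$ is a quasi-normed space, so the quasi-triangle inequality gives
\begin{equation*}
\|f+g\|_{L_{p,q}}\lesssim\|f\|_{L_{p,q}}+\|g\|_{L_{p,q}}.
\end{equation*}
Raising this to the $q$-th power and using $(a+b)^q\lesssim a^q+b^q$ (the constant depends only on $q$), we get
\begin{equation*}
\|\hat{\nu}_{M,r}-\hat{\lambda}_0\|^q_{L_{p,q}}\lesssim\|\hat{\nu}_{M,r}-\mathbb{E}\hat{\mu}_{M,r}\|^q_{L_{p,q}}+\|\mathbb{E}\hat{\mu}_{M,r}-\hat{\lambda}_0\|^q_{L_{p,q}}.
\end{equation*}
The first term is bounded by $M^{-\frac{q}{2}}r^{-\frac{qd}{p}}$ by~\eqref{pq}, and the second by $r^{\frac{q}{p'}}$ by~\eqref{il}.

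The $L_{p_1}$ estimate goes the same way. We use Minkowski's inequality and $(a+b)^{p_1}\lesssim a^{p_1}+b^{p_1}$. The first piece is bounded by $M^{-\frac{p_1}{2}}r^{-d}$ directly from Corollary~\ref{PrE} with exponent $p_1$. For the second piece we apply Lemma~\ref{ooo} with $p_1$ in place of $p$; this is allowed because $p_1>p>2$. It gives $\|\hat{\lambda}_0-\mathbb{E}\hat{\mu}_{M,r}\|_{L_{p_1}}\lesssim r^{\frac{1}{p_1'}}$, and raising to the power $p_1$ yields $r^{\frac{p_1}{p_1'}}$.

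No step here is a real obstacle. The only point to check is that every implicit constant is uniform in $M$ and $r$: the quasi-triangle constant depends only on $(p,q)$, the power-mean constants depend only on $q$ and $p_1$, and the constants in Corollaries~\ref{PrE},~\ref{leb} and Lemma~\ref{ooo} are already uniform in $M$ and $r$.
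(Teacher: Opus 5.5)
Your proposal is correct and is essentially the argument the paper intends; the paper states Corollary~\ref{cor1} without a written proof. You split $\hat{\nu}_{M,r}-\hat{\lambda}_0$ through $\mathbb{E}\hat{\mu}_{M,r}$, use the quasi-triangle inequality with~\eqref{pq} and~\eqref{il} for the $L_{p,q}$ bound, and use Minkowski with Corollary~\ref{PrE} and Lemma~\ref{ooo} at exponent $p_1>2$ for the $L_{p_1}$ bound, with constants uniform in $M$ and $r$ as you note.
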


	\begin{Le}\label{Ras}
		Let $M_0,M_1,M_2,\dots,M_{k-1}$ be fixed and let $M_k$ tend to infinity. Then, the inequalities
		\begin{equation}
			\varlimsup\limits_{M_k\rightarrow\infty}\|\hat{\mu}_{k+1}-\hat{\mu}_{k}\|^q_{L_{p,q}}\lesssim b(Q_k)^{q-\frac{q}{2\beta}}(n(Q_k)+1)^{\frac{qd}{p}},
		\end{equation}
		\begin{equation}
			\lim\limits_{M_k\rightarrow\infty}\|\hat{\mu}_{k+1}-\hat{\mu}_{k}\|_{L_{p_1}}=0
		\end{equation}
		are true.
	\end{Le}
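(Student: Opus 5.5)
We start from the recurrence \eqref{formz}, which gives
\begin{equation*}
\hat{\mu}_{k+1}-\hat{\mu}_k=b(Q_k)\big(\hat{\nu}_{M_k,\frac{r_k}{l(Q_k)},Q_k}-\hat{\lambda}_{Q_k}\big).
\end{equation*}
The plan is to pull the difference back to the unit cube through the homothety $T_{Q_k}$ and then apply Corollary~\ref{cor1} with $M=M_k$ and $r=r_k/l(Q_k)$.

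\textbf{Rescaling.} Write $\ell=l(Q_k)$. Both measures are images under $T_{Q_k}$ of $\nu_{M_k,r_k/\ell}$ and $\lambda_0$, so their Fourier transforms at $x$ equal a unimodular factor $e^{-2\pi i\langle y,x\rangle}$ times the Fourier transforms of $\nu_{M_k,r_k/\ell}$ and $\lambda_0$ evaluated at $\ell x$. Modulation does not change the distribution function. Dilation by $\ell$ multiplies the distribution function by $\ell^{-d}$, so it scales every $L_{p,q}$ and $L_{p_1}$ quasi-norm by $\ell^{-d/p}$ (respectively $\ell^{-d/p_1}$). Hence
\begin{equation*}
\|\hat{\mu}_{k+1}-\hat{\mu}_k\|^q_{L_{p,q}}=b(Q_k)^q\,\ell^{-\frac{qd}{p}}\,\|\hat{\nu}_{M_k,r_k/\ell}-\hat{\lambda}_0\|^q_{L_{p,q}},
\end{equation*}
and the analogous identity holds for $L_{p_1}$ with exponent $p_1$. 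Note that $\ell$ does not depend on $M_k$, since it is determined by $M_j$ and $b$ for the ancestors of $Q_k$. The variable $r=r_k/\ell$ does depend on $M_k$ and tends to $0$; for large $M_k$ it is below $\frac12$, so the corollary applies.

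\textbf{The $L_{p,q}$ estimate.} By Corollary~\ref{cor1} the bound is
\begin{equation*}
b(Q_k)^q\,\ell^{-\frac{qd}{p}}\Big(M_k^{-\frac q2}(r_k/\ell)^{-\frac{qd}{p}}+(r_k/\ell)^{\frac{q}{p'}}\Big).
\end{equation*}
The second term tends to $0$ as $M_k\to\infty$ because $r_k\to0$. In the first term the powers of $\ell$ cancel, leaving $b(Q_k)^qM_k^{-q/2}r_k^{-qd/p}$. Substituting \eqref{soot}, $r_k^{-qd/p}=M_k^{q/2}\,b(Q_k)^{-\frac{q}{2\beta}}(n(Q_k)+1)^{\frac{qd}{p}}$. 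The powers of $M_k$ cancel exactly, and we obtain $b(Q_k)^{q-\frac{q}{2\beta}}(n(Q_k)+1)^{\frac{qd}{p}}$, uniformly in $M_k$. Taking $\varlimsup$ gives the first inequality.

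\textbf{The $L_{p_1}$ limit.} The same computation with $p_1$ in place of $p$ gives, from the second inequality of Corollary~\ref{cor1},
\begin{equation*}
b(Q_k)^{p_1}\Big(M_k^{-\frac{p_1}{2}}r_k^{-d}+\ell^{-d}(r_k/\ell)^{\frac{p_1}{p_1'}}\Big).
\end{equation*}
The second term again vanishes as $r_k\to0$. For the first, \eqref{soot} gives $r_k^{-d}\asymp M_k^{p/2}$ times factors independent of $M_k$. So the first term behaves like $M_k^{(p-p_1)/2}$, which tends to $0$ because $p_1>p$. This is exactly why $p_1>p$ was fixed.

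\textbf{Main obstacle.} The only delicate step is the scaling bookkeeping: checking that the dilation acts on the Lorentz quasi-norm with the factor $\ell^{-d/p}$, and tracking which quantities depend on $M_k$ (namely $r_k$ and $r$, but not $\ell$ or $b(Q_k)$). Once that is done, the first inequality reduces to the exact cancellation of $M_k$ built into \eqref{soot}.
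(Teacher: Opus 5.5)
Your proposal is correct and is essentially the paper's proof: you rescale through $T_{Q_k}$ to extract the factor $l(Q_k)^{-d/p}$, apply Corollary~\ref{cor1} with $M=M_k$ and $r=r_k/l(Q_k)$, and use \eqref{soot} so that the powers of $M_k$ cancel in the $L_{p,q}$ term and leave $M_k^{-(p_1-p)/2}\to 0$ in the $L_{p_1}$ term. Your explicit checks are points the paper leaves implicit: that $l(Q_k)$, $b(Q_k)$ and $n(Q_k)$ do not depend on $M_k$, and that $r_k/l(Q_k)<\frac12$ for large $M_k$, which is needed for Corollary~\ref{cor1} to apply.
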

	\begin{proof}
		We can write the estimates
		\begin{equation}
			\begin{aligned}
				\|\hat{\mu}_{k+1}-\hat{\mu}_{k}\|^q_{L_{p,q}}\ &\!\!\!\overset{\scriptscriptstyle{\text{\eqref{formz}}}}{=}
				\|b(Q_k)(\hat{\nu}_{M_k,\frac{r_k}{l(Q_k)},Q_k}-\hat{\lambda}_{Q_k})\|^q_{L_{p,q}}\\
				&=				b(Q_k)^q\|\hat{\nu}_{M_k,\frac{r_k}{l(Q_k)},Q_k}-\hat{\lambda}_{Q_k}\|^q_{L_{p,q}}\\
				&=
				b(Q_k)^ql(Q_k)^{-\frac{qd}{p}}\|\hat{\nu}_{M_k,\frac{r_k}{l(Q_k)}}-\hat{\lambda}_0\|^q_{L_{p,q}}\\
				&\!\!\!\!\overset{\scriptscriptstyle{\text{Cor~\ref{cor1}}}}{\lesssim}
				b(Q_k)^ql(Q_k)^{-\frac{qd}{p}}\left(M_k^{-\frac{q}{2}}\left(\frac{r_k}{l(Q_k)}\right)^{-\frac{qd}{p}} + \left(\frac{r_k}{l(Q_k)}\right)^{\frac{q}{p'}}\right)\\
				&\overset{\scriptscriptstyle{\text{\eqref{soot}}}}{=}
				b(Q_k)^{q}l(Q_k)^{-\frac{qd}{p}}\left((n(Q_k)+1)^{\frac{qd}{p}}b(Q_k)^{-\frac{q}{2\beta}}l(Q_k)^{\frac{qd}{p}} + \left(\frac{r_k}{l(Q_k)}\right)^{\frac{q}{p'}}\right)\\
				&\overset{\scriptscriptstyle{M_k\rightarrow\infty}}{\longrightarrow}
				b(Q_k)^{q-\frac{q}{2\beta}}(n(Q_k)+1)^{\frac{qd}{p}},
			\end{aligned}
		\end{equation}
		and in the same way we have
		\begin{equation}
			\|\hat{\mu}_{k+1}-\hat{\mu}_{k}\|^{p_1}_{L_{p_1}}
			\lesssim
			b(Q_k)^{p_1-\frac{p}{2}}M_k^{-\frac{p_1-p}{2}}(n(Q_k)+1)^d+
			b(Q_k)^{p_1}l(Q_k)^{-d}\left(\frac{r_k}{l(Q_k)}\right)^{\frac{p_1}{p_1'}}
			\overset{\scriptscriptstyle{M_k\rightarrow\infty}}{\longrightarrow}
			0.
		\end{equation}
		Here  $b(Q_k)$, $n(Q_k)$ and $l(Q_k)$ are fixed, and $r_k\rightarrow0$ as $M_k\rightarrow\infty$ by~\eqref{soot}.
	\end{proof}

	\begin{Le}\label{p+p}
		Let $f\in L_{p,q}$ be a function and let $g_j\in L_{p,q}$ be a sequence of functions. Assume that $\varlimsup\limits_{j\rightarrow\infty}\|g_j\|_{L_{p,q}}\leqslant A$ and for some $p_1>p$ we have $\lim\limits_{j\rightarrow\infty}\|g_j\|_{L_{p_1}}=0$. Then
		\begin{equation}
			\varlimsup\limits_{j\rightarrow\infty} \|f+g_j\|^q_{L_{p,q}}\leqslant \|f\|^q_{L_{p,q}}+A^q.
		\end{equation} 
	\end{Le}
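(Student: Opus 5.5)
We work with the distribution function and the quasi-norm formula
\[
\|h\|^q_{L_{p,q}}=q\int_0^\infty (m_h(t)t^p)^{q/p}\frac{dt}{t}.
\]
Fix $\varepsilon\in(0,1)$. The idea is to split the level sets of $f+g_j$ into the part controlled by $f$ and the part controlled by $g_j$, and to show the cross-interaction disappears as $j\to\infty$. We use the two hypotheses on $g_j$ for different purposes. Since $\|g_j\|_{L_{p_1}}\to0$ with $p_1>p$, Chebyshev gives $m_{g_j}(t)\leqslant t^{-p_1}\|g_j\|^{p_1}_{L_{p_1}}$. Hence the mass of $g_j$ at any fixed height $t\geqslant\delta$ tends to zero uniformly in $t\geqslant\delta$. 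In other words, in the limit $g_j$ lives only at small heights.

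First we reduce to nice $f$. Simple (or bounded, compactly supported) functions are dense in $L_{p,q}$ for $q<\infty$, so pick $f_0$ with $\|f-f_0\|_{L_{p,q}}$ small. Because $L_{p,q}$ is only quasi-normed, we handle the error by an equivalent norm, or more crudely via $m_{f+g}(t)\leqslant m_{f_0+g}((1-\varepsilon)t)+m_{f-f_0}(\varepsilon t)$. The error term then contributes $C_\varepsilon\|f-f_0\|^q_{L_{p,q}}$, and the main term changes by a factor $(1-\varepsilon)^{-q}$. We may therefore assume $f$ is bounded, $|f|\leqslant K$, and supported on a set $E$ of finite measure.

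Next we split the level sets at a threshold $\delta$. For $t$ large, namely $t>K/\varepsilon$, we have $m_{f+g_j}(t)\leqslant m_{g_j}((1-\varepsilon)t)$. For $t\leqslant \delta$ we have $m_{f+g_j}(t)\leqslant m_f((1-\varepsilon)t)+m_{g_j}(\varepsilon t)$. This last bound is too lossy for $g_j$, so instead we decompose the functions themselves rather than their level sets.

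Write $f+g_j=(f+g_j\chi_E)+g_j\chi_{E^c}$. The two pieces have disjoint supports, so their distribution functions add:
\[
m_{f+g_j}=m_{f+g_j\chi_E}+m_{g_j\chi_{E^c}}.
\]
Since $q/p<1$ when $q<p$, the map $x\mapsto x^{q/p}$ is subadditive. In that case
\[
\|u+v\|^q_{L_{p,q}}\leqslant\|u\|^q_{L_{p,q}}+\|v\|^q_{L_{p,q}}
\]
for disjointly supported $u,v$, directly from the formula. The case $q>p$ needs the rearrangement formula instead; see the obstacle below. The piece $\|g_j\chi_{E^c}\|_{L_{p,q}}\leqslant\|g_j\|_{L_{p,q}}$ contributes at most $A^q$ in the limit. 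It remains to show that $\|f+g_j\chi_E\|_{L_{p,q}}\to\|f\|_{L_{p,q}}$.

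On the finite-measure set $E$, Hölder and $p_1>p$ give $\|g_j\chi_E\|_{L_{p,r}}\lesssim|E|^{1/p-1/p_1}\|g_j\|_{L_{p_1}}\to0$ for any $r$. This is the embedding $L_{p_1}(E)\hookrightarrow L_{p,r}(E)$. Quasi-triangle continuity then yields the convergence.

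The main obstacle is the disjoint-support additivity when $q>p$. There we would use
\[
\|h\|^q_{L_{p,q}}=\frac{q}{p}\int_0^\infty h^*(t)^q t^{q/p}\frac{dt}{t}.
\]
For disjoint $u,v$ with $u$ supported on finite measure $|E|$ and bounded by $K$, the rearrangement $(u+v)^*$ interleaves $u^*$ and $v^*$. The $v$-part is pushed right by at most $|E|$, which only increases the weight $t^{q/p-1}$. However, $v^*$ is small on the relevant range because of the $L_{p_1}$ smallness, which forces $v^*(t)\leqslant \|v\|_{L_{p_1}}t^{-1/p_1}$.

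Concretely, we split the $t$-integral at $t=R$. For $t>R\gg|E|$ the shift changes the weight only by a factor $(1+|E|/R)^{q/p}\to1$. For $t\leqslant R$ the contribution of $v^*$ tends to zero by the $L_{p_1}$ bound. Letting $j\to\infty$, then $R\to\infty$, then $\varepsilon\to0$, we obtain $\varlimsup\|f+g_j\|^q_{L_{p,q}}\leqslant\|f\|^q_{L_{p,q}}+A^q$.
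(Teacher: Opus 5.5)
Your argument for $q\leqslant p$ is correct, and it takes a different and slightly simpler route than the paper. After reducing to $f$ supported on a set $E$ of finite measure, you split $f+g_j=(f+g_j\chi_E)+g_j\chi_{E^c}$ by support. Then $m_{u+v}=m_u+m_v$ for disjoint supports, subadditivity of $x\mapsto x^{q/p}$, and $\|g_j\chi_E\|_{L_{p,q}}\lesssim|E|^{\frac{1}{p}-\frac{1}{p_1}}\|g_j\|_{L_{p_1}}\to0$ together give the claim. Your final ``quasi-triangle continuity'' must be the sharp form $\|a+b\|_{L_{p,q}}\leqslant(1+\varepsilon)\|a\|_{L_{p,q}}+C_\varepsilon\|b\|_{L_{p,q}}$ (the paper's Lemma~\ref{tr}); your level-splitting reduction already supplies it.

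The case $q>p$ is not proved by your sketch, and it is needed, since $p$ and $q$ are independent in Theorem~\ref{resL}.

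First, $u=f+g_j\chi_E$ is not bounded by $K$. You should discard $g_j\chi_E$ first and take $u=f$, $v=g_j\chi_{E^c}$, $w=u+v$. With that choice your treatment of $v$ is fine: $w^*(t)\leqslant v^*(t-|E|)$ controls $t>R$, and values of $v$ above $K$ are not displaced at all.

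The genuine gap is that you only track how $u$ displaces $v$. In the merged rearrangement, the values of $u$ at height $h$ are themselves pushed right by $m_v(h)$, and for $q>p$ this increases their weight $t^{q/p-1}$. This displacement is not bounded by anything like $|E|$; the only uniform bound is $m_v(h)\leqslant h^{-p}\|v\|^p_{L_{p,\infty}}$. The $L_{p_1}$ bound $m_v(h)\leqslant h^{-p_1}\|g_j\|^{p_1}_{L_{p_1}}$ makes it small only for $h$ bounded below. So ``the contribution of $v^*$ on $[0,R]$ tends to zero'' does not give $\frac{q}{p}\int_0^R (w^*)^q t^{q/p-1}\,dt\leqslant\|u\|^q_{L_{p,q}}+o(1)$.

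The missing step is a second height cutoff $\eta$. Set $\tau_j=\eta^{-p_1}\|g_j\|^{p_1}_{L_{p_1}}$, which tends to $0$ for fixed $\eta$. Then $w^*(t)\leqslant\max\{\eta,u^*(t-\tau_j)\}$ for $t>\tau_j$, and $w^*(t)\leqslant\max\{K,v^*(t)\}$ for all $t$. Using $|u|\leqslant K$ and $|E|<\infty$, this yields $\frac{q}{p}\int_0^R (w^*)^q t^{q/p-1}\,dt\leqslant\|u\|^q_{L_{p,q}}+\eta^qR^{q/p}+o(1)$ as $j\to\infty$. Then let $\eta\to0$, and only afterwards $R\to\infty$.

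This cutoff is exactly what the paper builds into its decomposition. It cuts $f$ at height $\varepsilon$ and $g_j$ at height $\delta<\varepsilon$, so that in the main term $f_{\geqslant\varepsilon}+(g_j)_{<\delta}\chi_{\{|f|<\varepsilon\}}$ every value of the first piece dominates every value of the second. The rearrangement is then a plain concatenation: the $f$-part is not displaced, and the $g$-part is shifted by exactly $m_f(\varepsilon)$, which is handled by splitting at $Nm_f(\varepsilon)$ (your $R$). The large values of $g_j$, which live on a set of measure at most $\delta^{-p_1}\|g_j\|^{p_1}_{L_{p_1}}$, go into the error term. This treats $q\leqslant p$ and $q>p$ uniformly and needs no reduction to bounded, finitely supported $f$. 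Your support splitting is quicker when $q\leqslant p$, but for $q>p$ it has to reintroduce the height cutoff anyway.
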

	We will prove this lemma in Section~\ref{tech}.

	\begin{Cor}
		There exists a constant $C>0$ such that the inequality
		\begin{equation}
			\|\hat{\mu}_{k+1}\|^q_{L_{p,q}}\leqslant \|\hat{\mu}_{k}\|^q_{L_{p,q}}+Cb(Q_k)^{q-\frac{q}{2\beta}}(n(Q_k)+1)^{\frac{qd}{p}}
		\end{equation}
		is true, provided $M_k$ is  sufficiently large.
	\end{Cor}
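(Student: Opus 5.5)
The corollary follows by applying Lemma~\ref{p+p} to the increments supplied by Lemma~\ref{Ras}.

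\textbf{Setup.} Fix $M_0,\dots,M_{k-1}$. Then $\mu_k$ is fixed and does not depend on $M_k$. We regard $\mu_{k+1}=\mu_{k+1}(M_k)$ as a sequence indexed by $M_k\to\infty$. Put
\begin{equation*}
f=\hat{\mu}_k,\qquad g_{M_k}=\hat{\mu}_{k+1}-\hat{\mu}_k .
\end{equation*}
The quantities $b(Q_k)$, $n(Q_k)$ and $l(Q_k)$ are determined by the already constructed part of the tree, so they do not change as $M_k$ grows.

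\textbf{Membership in $L_{p,q}$.} We first check that $f\in L_{p,q}$. This holds by induction on $k$. The base case is $\hat{\mu}_0=\hat{\lambda}_0$, a product of one-dimensional sinc-type functions. It lies in $L_{p,q}$ for $p>2$ because it is bounded and decays like $|x_i|^{-1}$ in each coordinate. Alternatively, $\hat{\lambda}_0\in L_{p_1}\cap L_{p_2}$, and one interpolates via items~\ref{I1} and~\ref{I4} of Proposition~\ref{inter}. The inductive step follows from Corollary~\ref{cor1} and the scaling computation in Lemma~\ref{Ras}, which show $g_{M_k}\in L_{p,q}$ for each $M_k$; hence $\hat{\mu}_{k+1}=f+g_{M_k}\in L_{p,q}$.

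\textbf{Applying the lemmas.} Lemma~\ref{Ras} gives a constant $C_0$, independent of $k$ and of the $M_j$, with
\begin{equation*}
\varlimsup_{M_k\to\infty}\|g_{M_k}\|_{L_{p,q}}\leqslant A:=\Big(C_0\,b(Q_k)^{q-\frac{q}{2\beta}}(n(Q_k)+1)^{\frac{qd}{p}}\Big)^{1/q}.
\end{equation*}
The same lemma gives $\|g_{M_k}\|_{L_{p_1}}\to0$ with $p_1>p$. These are exactly the hypotheses of Lemma~\ref{p+p}, which yields
\begin{equation*}
\varlimsup_{M_k\to\infty}\|\hat{\mu}_{k+1}\|^q_{L_{p,q}}\leqslant \|\hat{\mu}_k\|^q_{L_{p,q}}+C_0\,b(Q_k)^{q-\frac{q}{2\beta}}(n(Q_k)+1)^{\frac{qd}{p}}.
\end{equation*}

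\textbf{From limsup to a fixed large $M_k$.} By the definition of $\varlimsup$, for every $\varepsilon>0$ the quantity $\|\hat{\mu}_{k+1}\|^q_{L_{p,q}}$ is at most the right-hand side plus $\varepsilon$ once $M_k$ is large enough. Choose
\begin{equation*}
\varepsilon=C_0\,b(Q_k)^{q-\frac{q}{2\beta}}(n(Q_k)+1)^{\frac{qd}{p}}.
\end{equation*}
This is strictly positive, since $b(Q_k)>0$. The claim then holds with $C=2C_0$, which is uniform in $k$.

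\textbf{Caveat.} The one step needing attention is that the implicit constant in Lemma~\ref{Ras} does not depend on $k$. This holds because it comes only from Corollaries~\ref{PrE} and~\ref{cor1}, whose constants depend only on $p,q,p_1,p_2,d$.
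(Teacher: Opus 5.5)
Your proof is correct and follows the paper's route: the paper obtains this corollary by combining Lemma~\ref{Ras} (the $\varlimsup$ bound in $L_{p,q}$ and the vanishing of the increments in $L_{p_1}$) with Lemma~\ref{p+p}, applied with $f=\hat{\mu}_k$ and $g=\hat{\mu}_{k+1}-\hat{\mu}_k$ as $M_k\to\infty$. The paper leaves three points implicit, and you supply them: that $\hat{\mu}_k\in L_{p,q}$, that the slack from the $\varlimsup$ can be absorbed by taking $C=2C_0$, and that the constant is uniform in $k$.
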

	This corollary is a combination of Lemmas~\ref{Ras} and~\ref{p+p}.

	\begin{proof}[Proof of Theorem~\ref{resL}]
		
		Let $\{M_0,M_2,\dots\}$ be a rapidly growing infinite sequence. Let $\mu_{\infty}$ be a weak* limit of the $\mu_{k}$. The rapid growth of  $M_k$ provides us with the following inequality:
		\begin{equation}
			\begin{aligned}
				\|\hat{\mu}_{k}\|^q_{L_{p,q}}&\leqslant \|\hat{\lambda}_0\|^q_{L_{p,q}}+C\sum\limits_{j=0}^{k-1}b(Q_j)^{q-\frac{q}{2\beta}}n(Q_j)^{\frac{qd}{p}}\\
				&\lesssim
				\sum\limits_{j=0}^{\infty}b(Q_j)^{q-\frac{q}{2\beta}}(n(Q_j)+1)^{\frac{qd}{p}}
				=
				\sum\limits_{n=0}^{\infty}\sum\limits_{n(Q_j)=n}b(Q_j)^{q-\frac{q}{2\beta}}(n+1)^{\frac{qd}{p}}\\
				&\overset{\eqref{2^n}}{\leqslant}
				\sum\limits_{n=0}^{\infty}\sum\limits_{n(Q_j)=n}b(Q_j)2^{-n(q-\frac{q}{2\beta}-1)}(n+1)^{\frac{qd}{p}}\ 
				\overset{\scriptscriptstyle{\eqref{probsum}}}{=} 
				\sum\limits_{n=0}^{\infty} 2^{-n\frac{q-1}{\beta}(\beta-\frac{q'}{2})}(n+1)^{\frac{qd}{p}}<\infty.
			\end{aligned}
		\end{equation}
		
		Thus $\hat{\mu}_{\infty}\in L_{p,q}(\mathbb{R}^d)$. This formula and Proposition~\ref{H=0} complete the proof of Theorem~\ref{resL}.
	\end{proof}

	\section{Proof of technical lemmas}\label{tech}
	
	\subsection*{Proof of Proposition~\ref{HLp}.}
	\
	
	{\bf Item 1} is trivial.
	
	{\bf Item 2} states that $\mathcal{H}_{\alpha,q}$ is an outer measure.
	Let $A\subset\cup A_j$. We want to prove that $\mathcal{H}_{\alpha,q}(A)\leqslant\sum \mathcal{H}_{\alpha,q}(A_j)$.
	
	{\bf Case $q\leqslant1$ or $q=\infty$.}
	Let $\mathfrak{D}_j$ be a covering of $A_j$ such that $\diam B_i<\delta$ for all $B_i\in\mathfrak{D}_j$ and
	\begin{equation}
		\sum\limits_k\Big(\underset{\diam B_i\in\Delta_k}{\sum\limits_{B_i\in\mathfrak{D}_j}}(\diam B_i)^\alpha\Big)^q\leqslant 
		\mathcal{H}_{\alpha,q}(A_j)+\frac{\varepsilon}{2^j} \ \ \ \ \ \ \text{in the case } q<\infty,
	\end{equation}
	\begin{equation}
		\sup\limits_k\underset{\diam B_i\in\Delta_k}{\sum\limits_{B_i\in\mathfrak{D}_j}}(\diam B_i)^\alpha\leqslant 
		\mathcal{H}_{\alpha,q}(A_j)+\frac{\varepsilon}{2^j} \ \ \ \ \ \ \text{in the case } q=\infty.
	\end{equation}
	Let $\mathfrak{D}=\cup\mathfrak{D}_j$. The family $\mathfrak{D}$ is a covering of $A$. We can write the inequality for $q \leqslant 1$ 
	\begin{equation}
		\begin{aligned}
			\sum\limits_k\Big(\sum\limits_{j}\underset{\diam B_i\in\Delta_k}{\sum\limits_{B_i\in\mathfrak{D}_j}}(\diam B_i)^\alpha\Big)^q
			&\leqslant 
			\sum\limits_k\sum\limits_{j}\Big(\underset{\diam B_i\in\Delta_k}{\sum\limits_{B_i\in\mathfrak{D}_j}}(\diam B_i)^\alpha\Big)^q\\
			&\leqslant
			\sum\limits_{j}\big(\mathcal{H}_{\alpha,q}(A_j)+\frac{\varepsilon}{2^j}\big)
			=
			\sum\limits_{j}\mathcal{H}_{\alpha,q}(A_j)+\varepsilon.
		\end{aligned}
	\end{equation}	
	Similarly, for $q=\infty$
	\begin{equation}
		\begin{aligned}
			\sup\limits_k\sum\limits_{j}\underset{\diam B_i\in\Delta_k}{\sum\limits_{B_i\in\mathfrak{D}_j}}(\diam B_i)^\alpha
			&\leqslant 
			\sum\limits_{j}\sup\limits_k\underset{\diam B_i\in\Delta_k}{\sum\limits_{B_i\in\mathfrak{D}_j}}(\diam B_i)^\alpha\\
			&\leqslant
			\sum\limits_{j}\big(\mathcal{H}_{\alpha,q}(A_j)+\frac{\varepsilon}{2^j}\big)
			=
			\sum\limits_{j}\mathcal{H}_{\alpha,q}(A_j)+\varepsilon.
		\end{aligned}
	\end{equation}
	
	{\bf Case $1<q<\infty$.}	
	Let $\mathfrak{D}_1$ be a covering of $A_1$ such that 
	\begin{equation}
		\sum\limits_k\Big(\underset{\diam B_i\in\Delta_k}{\sum\limits_{B_i\in\mathfrak{D}_1}}(\diam B_i)^\alpha\Big)^q\leqslant 
		\mathcal{H}_{\alpha,q}(A_1)+\frac{\varepsilon}{2}. 
	\end{equation}
	We will choose families $\mathfrak{D}_j$ inductively. Assume we have chosen  $\mathfrak{D}_1,\dots,\mathfrak{D}_N$ and numbers $k_1,\dots,k_N$. Let $k_{N+1}\in\mathbb{N}$ be such that $k_{N+1}>k_N$ and
	\begin{equation}\label{epsq}
		\sum\limits_{k=k_{N+1}}^{\infty}\Big(\sum\limits_{j=1}^{N}\underset{\diam B_i\in\Delta_k}{\sum\limits_{B_i\in\mathfrak{D}_j}}(\diam B_i)^\alpha\Big)^q\leqslant \frac{\varepsilon^q}{2^{N+1}}. 
	\end{equation}
	We choose $\mathfrak{D}_{N+1}$ to be a covering of $A_{N+1}$ such that for any $B\in\mathfrak{D}_{N+1}$ we have $\diam B<2^{-k_{N+1}-1}$ and 
	\begin{equation}
		\sum\limits_k\Big(\underset{\diam B_i\in\Delta_k}{\sum\limits_{B_i\in\mathfrak{D}_{N+1}}}(\diam B_i)^\alpha\Big)^q\leqslant 
		\mathcal{H}_{\alpha,q}(A_{N+1})+\frac{\varepsilon}{2}. 
	\end{equation}
	This choice provides us with the covering  $\mathfrak{D}=\cup\mathfrak{D}_j$ of $A$.
	
	Using the inequality $(a+b)^q\leqslant \frac{a^q}{\varepsilon^{q-1}}+\frac{b^q}{(1-\varepsilon)^{q-1}}$ for $q>1$, we may write the estimate
	\begin{equation}
		\begin{aligned}
			&\sum\limits_k\Big(\underset{\diam B_i\in\Delta_k}{\sum\limits_{B_i\in\mathfrak{D}}}(\diam B_i)^\alpha\Big)^q=
			\sum\limits_k\Big(\sum\limits_{j=1}^{\infty}\underset{\diam B_i\in\Delta_k}{\sum\limits_{B_i\in\mathfrak{D}_j}}(\diam B_i)^\alpha\Big)^q\\
			&=
			\sum\limits_{N}\sum\limits_{k_N+1}^{k_{N+1}}\Big(\sum\limits_{j=1}^{N}\underset{\diam B_i\in\Delta_k}{\sum\limits_{B_i\in\mathfrak{D}_j}}(\diam B_i)^\alpha\Big)^q\\
			&\leqslant
			\sum\limits_{N}\sum\limits_{k_N+1}^{k_{N+1}}\frac{1}{\varepsilon^{q-1}}\Big(\sum\limits_{j=1}^{N-1}\underset{\diam B_i\in\Delta_k}{\sum\limits_{B_i\in\mathfrak{D}_j}}(\diam B_i)^\alpha\Big)^q
			+
			\frac{1}{(1-\varepsilon)^{q-1}}\Big(\underset{\diam B_i\in\Delta_k}{\sum\limits_{B_i\in\mathfrak{D}_N}}(\diam B_i)^\alpha\Big)^q\\
			&\overset{\eqref{epsq}}{\leqslant}
			\sum_{N=1}^\infty\left[\frac{\mathcal{H}_{\alpha,q}(A_N)+\frac{\varepsilon}{2^N}}{(1-\varepsilon)^{q-1}}+\frac{\varepsilon}{2^N}\right].
		\end{aligned}
	\end{equation}
	Considering arbitrarily small $\varepsilon$, we get
	\begin{equation}
		\mathcal{H}_{\alpha,q}( A)
		\leqslant
		\sum\limits_{N=1}^\infty \mathcal{H}_{\alpha,q}(A_N).
	\end{equation}
	{\bf Item 3} states that if $1\leqslant q<\infty$, then $\mathcal{H}_{\alpha,q}$ is a Borel measure.
	
	From the previous item we know that $\mathcal{H}_{\alpha,q}$ is an outer measure. To prove that it is a Borel measure it suffices to show that $\mathcal{H}_{\alpha,q}$ is additive on separated sets (see~\cite[Theorem~1.7]{Mattila}). Let $A_1$ and $A_2$ be sets such that $\dist(A_1,A_2)>0$, we want to prove that $\mathcal{H}_{\alpha,q}(A_1\cup A_2)=\mathcal{H}_{\alpha,q}(A_1)+\mathcal{H}_{\alpha,q}(A_2)$. Let $\mathfrak{D}$ be a covering of $A_1\cup A_2$ such that the diameter of any set in $\mathfrak{D}$ is less than $\dist(A_1,A_2)>0$. Then the family $\mathfrak{D}$ can be split into two subfamilies $\mathfrak{D}_1$ and $\mathfrak{D}_2$ are coverings of $A_1$ and $A_2$ respectively. If the diameters of all sets are sufficiently small, then the inequality
	\begin{equation}
		\sum\limits_k\Big(\underset{\diam B_i\in\Delta_k}{\sum\limits_{B_i\in\mathfrak{D}_j}}(\diam B_i)^\alpha\Big)^q> 
		\mathcal{H}_{\alpha,q}(A_j)-\varepsilon 
	\end{equation}
	is true.
	Therefore,
	\begin{equation}
		\begin{aligned}
			&\sum\limits_k\Big(\underset{\diam B_i\in\Delta_k}{\sum\limits_{B_i\in\mathfrak{D}}}(\diam B_i)^\alpha\Big)^q\\
			&=
			\sum\limits_k\Big(\underset{\diam B_i\in\Delta_k}{\sum\limits_{B_i\in\mathfrak{D}_1}}(\diam B_i)^\alpha+\underset{\diam B_i\in\Delta_k}{\sum\limits_{B_i\in\mathfrak{D}_2}}(\diam B_i)^\alpha\Big)^q\\
			&\geqslant
			\sum\limits_k\Big(\underset{\diam B_i\in\Delta_k}{\sum\limits_{B_i\in\mathfrak{D}_1}}(\diam B_i)^\alpha\Big)^q+\sum\limits_k\Big(\underset{\diam B_i\in\Delta_k}{\sum\limits_{B_i\in\mathfrak{D}_2}}(\diam B_i)^\alpha\Big)^q\\
			&>
			\mathcal{H}_{\alpha,q}(A_1)+\mathcal{H}_{\alpha,q}(A_2)-2\varepsilon.
		\end{aligned}
	\end{equation}
	So, $\mathcal{H}_{\alpha,q}(A_1\cup A_2)\geqslant\mathcal{H}_{\alpha,q}(A_1)+\mathcal{H}_{\alpha,q}(A_2)$. This provides us with the equality $\mathcal{H}_{\alpha,q}(A_1\cup A_2)=\mathcal{H}_{\alpha,q}(A_1)+\mathcal{H}_{\alpha,q}(A_2)$, because $\mathcal{H}_{\alpha,q}$ is an outer measure.
	
	{\bf Item 4} states that if $S$ is a $\sigma$-finite set relative to the $\alpha$-packing measure, then, for any $q>0$, $S$ is $\sigma$-finite relative to  $\mathcal{H}_{\alpha,q}$.
	
	It suffices to prove that if $\widetilde{\mathcal{P}}_\alpha(A)<\infty$ then $\mathcal{H}_{\alpha,q}(A)<\infty$. 
	Let $\mathfrak{B}$ be a family of closed balls with centers in a maximal $\varepsilon$-separated subset of set $A$ and radii equal to $\varepsilon$. The family $\mathfrak{B}$ is a covering of $S$. Thus,
	
	\begin{equation}
		\sum\limits_k\Big(\underset{\diam B_i\in\Delta_k}{\sum\limits_{B_i\in\mathfrak{B}}}(\diam B_i)^\alpha\Big)^q
		=
		(N_\varepsilon(A)(2\varepsilon)^\alpha)^q
		\overset{Rem~\ref{yp}}{\lesssim}
		\widetilde{\mathcal{P}}_\alpha(A)^q<\infty. 
	\end{equation}

	{\bf Item 5} states that if $0<q_1<q_2$, then $\mathcal{H}_{\alpha,q_2}$ is absolutely continuous with respect to $\mathcal{H}_{\alpha,q_1}$.
	
	In this case, $\|. \|_{\ell_{\alpha,\infty}}\lesssim\|. \|_{\ell_{\alpha,\alpha q_2}}\lesssim\|. \|_{\ell_{\alpha,\alpha q_1}}$. Therefore, according to~\eqref{NHq} and~\eqref{NHi},
	\begin{equation}
		\mathcal{H}_{\alpha,\infty}^{\frac{1}{\alpha}}\lesssim
		\mathcal{H}_{\alpha,\alpha q_2}^{\frac{1}{\alpha q_2}}\lesssim
		\mathcal{H}_{\alpha,\alpha q_1}^{\frac{1}{\alpha q_1}},
	\end{equation}
	and, in particular, we have the desired absolutely continuity.
	
	{\bf Item 6} states that if $1\leqslant q_1<q_2$, then we have the following statements:
	
	If $\mathcal{H}_{\alpha,q_1}(S)<\infty$, then   $\mathcal{H}_{\alpha,q_2}(S)=0$;
	
	If $\mathcal{H}_{\alpha,q_2}(S)>0$, then   $\mathcal{H}_{\alpha,q_1}(S)=\infty$.
	
	It suffices to prove the first part of the item in the case $q_2<\infty$.
	Let $\mathcal{H}_{\alpha,q_1}(S)<\infty$. Split $S=\sqcup S_j$ such that  $\mathcal{H}_{\alpha,q_1}(S_j)<\frac{1}{n}$. We can write the inequality
	\begin{equation}
		\mathcal{H}_{\alpha,q_2}(S)
		=
		\sum\limits_{j}\mathcal{H}_{\alpha,q_2}(S_j)
		\lesssim
		\sum\limits_{j}\mathcal{H}_{\alpha,q_1}(S_j)^{\frac{q_2}{q_1}}
		\leqslant
		\frac{1}{n^{\frac{q_2}{q_1}-1}}\sum\limits_{j}\mathcal{H}_{\alpha,q_1}(S_j)
		=
		\frac{1}{n^{\frac{q_2}{q_1}-1}}\mathcal{H}_{\alpha,q_1}(S)
		\overset{n\rightarrow\infty}{\rightarrow}
		0.
	\end{equation}
	
	{\bf Item 7} states that if $\alpha_1<\alpha_2$, then for any $0<q_1,q_2$ we have the following statements:
	
	If $\mathcal{H}_{\alpha_1,q_1}(S)<\infty$, then   $\mathcal{H}_{\alpha_2,q_2}(S)=0$;
	
	If $\mathcal{H}_{\alpha_2,q_2}(S)>0$, then   $\mathcal{H}_{\alpha_1,q_1}(S)=\infty$.
	
	It suffices to prove the first part of the item in the case $q_1=\infty$.
	Let $\mathcal{H}_{\alpha_1,\infty}(S)<\infty$. It means that there exist coverings $\mathfrak{D}_j$ such that $\diam B_i\leqslant2^{-j}$ for all $B_i\in\mathfrak{D}_j$ and satisfying the estimate
	\begin{equation}
		\sup\limits_k\underset{\diam B_i\in\Delta_k}{\sum\limits_{B_i\in\mathfrak{D}_j}}(\diam B_i)^{\alpha_1}
		\leqslant 
		C.
	\end{equation}

	So we can estimate $\mathcal{H}_{\alpha_2,q_2}(S)$ by the quantity
	
	\begin{equation}
		\sum\limits_{k=j}^\infty\Big(\underset{\diam B_i\in\Delta_k}{\sum\limits_{B_i\in\mathfrak{D}_j}}(\diam B_i)^{\alpha_2}\Big)^{q_2}
		\leqslant
		\sum\limits_{k=j}^\infty(2^{-k(\alpha_2-\alpha_1)}C)^{q_2}
		=
		C^{q+2}\sum\limits_{k=j}^\infty2^{-k(\alpha_2-\alpha_1)q_2}
		\overset{j\rightarrow\infty}{\longrightarrow}
		0.
	\end{equation}
	
	{\bf Item 8} states that if $0<q<1$ and $f$ is a regular function such that
	\begin{equation}
		\int\limits_{0}^{1}\left(\frac{t^\alpha}{f(t)}\right)^{\frac{q}{1-q}}\frac{dt}{t}<\infty,
	\end{equation}
	then $\mathcal{H}_{\alpha,q}$ is absolutely continuous with respect to $\Lambda_f$.
	Moreover, if $\mathcal{H}_{\alpha,q}(S)>0$, then $\Lambda_f(S)=\infty$; if $\Lambda_f(S)<\infty$, then $\mathcal{H}_{\alpha,q}(S)=0$.

	It suffices to prove that if $\mathcal{H}_{\alpha,q}(S)>0$, then $\Lambda_f(S)=\infty$.
	Let $\Lambda_f(S)<\infty$. It means that there exist coverings $\mathfrak{D}_j$ such that the diameters of all sets are not greater than $2^{-j}$ and satisfy the estimate
	\begin{equation}
		\sum\limits_{B_i\in\mathfrak{D}_j}f(\diam B_i)\leqslant 
		C
	\end{equation}
	Let $M_k$ be the number of sets $B_i$ such that $\diam B_i\in\Delta_k$.		
	We can estimate $\mathcal{H}_{\alpha,q}(S)$ by the quantity
	\begin{equation}
		\begin{aligned}
			\sum\limits_{k=j}^\infty\Big(\underset{\diam B_i\in\Delta_k}{\sum\limits_{B_i\in\mathfrak{D}_j}}(\diam B_i)^{\alpha}\Big)^{q}
			&\asymp
			\sum\limits_{k=j}^\infty M_k^q2^{-qk\alpha}\\
			&\lesssim
			\Big(\sum\limits_{k=j}^\infty M_kf(2^{-k})\Big)^q
			\Big(\sum\limits_{k=j}^\infty \Big(\frac{2^{-k\alpha}}{f(2^{-k})}\Big)^{\frac{q}{1-q}}\Big)^{1-q}\\
			&\asymp
			\Big(\sum\limits_{B_i\in\mathfrak{D}_j}f(\diam B_i)\Big)^q
			\Big(\int\limits_{0}^{2^{-j}}\Big(\frac{t^\alpha}{f(t)}\Big)^{\frac{q}{1-q}}\frac{dt}{t}\Big)^{1-q}
			\overset{j\rightarrow\infty}{\longrightarrow}
			0.
		\end{aligned}
	\end{equation}

	{\bf Item 9} states that if $1<q\leqslant\infty$ and $f$ is a regular function such that
	\begin{equation}
		\int\limits_{0}^{1}\left(\frac{f(t)}{t^\alpha}\right)^{\frac{q}{q-1}}\frac{dt}{t}<\infty,
	\end{equation}
	then $\Lambda_f$ is absolutely continuous with respect to $\mathcal{H}_{\alpha,q}$.
	Moreover if $\Lambda_f(S)>0$, then $\mathcal{H}_{\alpha,q}(S)=\infty$; if $\mathcal{H}_{\alpha,q}(S)<\infty$, then $\Lambda_f(S)=0$.
	
	It suffices to prove that if $\Lambda_f(S)>0$, then $\mathcal{H}_{\alpha,q}(S)=\infty$.
	Let $\mathcal{H}_{\alpha,q}(S)<\infty$. It means that there exist coverings $\mathfrak{D}_j$ such that the diameters of all sets are not greater than $2^{-j}$ and satisfy the estimate
	\begin{equation}
		\sum\limits_{k=j}^{\infty}\Big(\underset{\diam B_i\in\Delta_k}{\sum\limits_{B_i\in\mathfrak{D}_j}}(\diam B_i)^{\alpha}\Big)^q\leqslant 
		C.
	\end{equation}
	Let $M_k$ be the number of sets $B_i$ such that $\diam B_i\in\Delta_k$.		
	So, we can estimate $\Lambda_f(S)$ by the quantity
	
	\begin{equation}
		\begin{aligned}
			\sum\limits_{B_i\in\mathfrak{D}_j}f(\diam B_i)
			&\asymp
			\sum\limits_{k=j}^\infty M_kf(2^{-k})
			\lesssim
			\Big(\sum\limits_{k=j}^\infty M_k^q2^{-qk\alpha}\Big)^{\frac{1}{q}}\Big(\sum\limits_{k=j}^\infty \Big(\frac{f(2^{-k})}{2^{-k\alpha}}\Big)^{\frac{q}{q-1}}\Big)^{\frac{q-1}{q}}\\
			&\asymp
			\Big(\sum\limits_{k=j}^{\infty}\Big(\underset{\diam B_i\in\Delta_k}{\sum\limits_{B_i\in\mathfrak{D}_j}}(\diam B_i)^{\alpha}\Big)^q\Big)^{\frac{1}{q}}
			\Big(\int\limits_{0}^{2^{-j}}\Big(\frac{f(t)}{t^\alpha}\Big)^{\frac{q}{q-1}}\frac{dt}{t}\Big)^{\frac{q-1}{q}}
			\overset{j\rightarrow\infty}{\longrightarrow}
			0.
		\end{aligned}
	\end{equation}

	\subsection*{Proof of Lemma~\ref{frost}.}

	To prove this lemma, we need some auxiliary results.
	
	\begin{Le}\label{Morse1}
		For any $d\in\mathbb{N}$ there exists a constant $\theta(d)$ such that the following holds. Let $A$ be a bounded subset of $\mathbb{R}^d$ and let $\mathfrak{B}$ be a family of balls in~$\mathbb{R}^d$.
		Assume that
		\begin{equation}
			\forall x\in A \ \ \ \exists B_r(y)\in \mathfrak{B} \ \ \  such \ that \ x\in B_{\frac{r}{3}}(y).
		\end{equation}
		Then there exists a subfamily $\mathfrak{B}'\subset\mathfrak{B}$ satisfying the requirements
		\begin{enumerate}[1)]
			\item $A\subset \underset{B_{r_i}(x_i)\in\mathfrak{B}'}{\bigcup}B_{r_i}(x_i)$,
			
			\item the family $\mathfrak{B}'$ may be split into $\theta$ disjoint subfamilies.
		\end{enumerate}
	\end{Le}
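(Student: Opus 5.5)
This is a Besicovitch/Morse-type covering lemma, so I will run the Besicovitch argument adapted to the ``$x\in B_{r/3}(y)$'' condition.

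\textbf{Reduction and selection.} First I reduce to bounded radii. If $\sup r=\infty$ over balls meeting $A$, then a single ball with $r$ large enough, namely $r/3\geqslant \diam A$ plus the distance to $A$, already contains $A$, and $\mathfrak{B}'$ is one ball. So I may assume $R_0=\sup\{r\}<\infty$.

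I then select greedily. Let $A_0=A$. At step $i$, if $A_{i}\neq\emptyset$, pick a point $a\in A_i$ and a ball $B_{r_i}(y_i)\in\mathfrak{B}$ with $a\in B_{r_i/3}(y_i)$ and $r_i>\frac{3}{4}\sup\{r: B_r(y)\in\mathfrak{B},\ B_{r/3}(y)\ni \text{some point of }A_i\}$. Set $A_{i+1}=A_i\setminus\bigcup_{j\leqslant i}B_{r_j}(y_j)$, and continue transfinitely or countably.

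\textbf{Properties of the selected balls.} Two facts follow from the selection rule.
\begin{enumerate}[(a)]
\item For $i<j$, the point $a_j\in B_{r_j/3}(y_j)$ lies outside $B_{r_i}(y_i)$. Also $r_j\leqslant\frac43 r_i$.
\item The shrunken balls $B_{r_i/12}(y_i)$ are pairwise disjoint. For $i<j$ we have $|y_i-y_j|\geqslant |a_j-y_i|-|a_j-y_j|\geqslant r_i-r_j/3\geqslant r_i-\frac49r_i$, which exceeds $(r_i+r_j)/12$.
\end{enumerate}
Because of (b) and $r\leqslant R_0$, only finitely many selected balls of radius above any fixed $\epsilon$ meet the bounded set $A$. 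Hence $r_i\to0$ if the process is infinite.

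\textbf{Covering.} Every $x\in A$ gets covered. Otherwise $x$ remains in all $A_i$, so $x\in B_{r/3}(y)$ for some ball of radius $r>0$. The selection rule then forces $r_i>\frac34 r$ for all $i$, contradicting $r_i\to0$.

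\textbf{Bounded overlap and coloring.} The main step is to show that each selected ball $B_{r_j}(y_j)$ intersects at most $N(d)$ earlier balls $B_{r_i}(y_i)$, $i<j$. I split the earlier balls by size.
\begin{itemize}
\item \emph{Comparable radii} ($r_i\leqslant Kr_j$): the disjoint shrunken balls of (b) all lie in $B_{(2K+2)r_j}(y_j)$ and have radius $\gtrsim r_j$. A volume count bounds their number by $C(K,d)$.
\item \emph{Large radii} ($r_i>Kr_j$): here I use the Besicovitch angle argument. Fact (a) says $y_j$ is near the point $a_j\notin B_{r_i}(y_i)$, while $y_j$ lies in a ball of much larger radius. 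So the direction from $y_j$ toward $y_i$, viewed relative to $y_j$, determines the ball. If two such large earlier balls $i<i'$ have directions with angle below a small $\theta_0$, then $y_{i'}$ lands inside $B_{r_i/3}$-type regions, which contradicts (a) for the pair $(i,i')$. A sphere-packing count then gives at most $C(d)$ such balls.
\end{itemize}
I expect this angular estimate to be the main obstacle. The constants $1/3$ and $3/4$ must be tuned so that the standard Besicovitch computation closes. I would first try to reproduce the proof in Mattila's book, Theorem~2.7, with the centers replaced by the points $a_j$ and the radii scaled by $1/3$.

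Finally, with $N=N(d)$ bounded, I color the balls greedily in order of selection. Each ball receives one of $\theta=N+1$ colors, chosen to differ from the colors of the earlier balls it meets. Each color class is then a disjoint subfamily, which gives item 2).
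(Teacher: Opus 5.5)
Your proposal is correct in outline, and it takes a different route from the paper. The paper gives no argument of its own. It only says the lemma is a particular case of the Morse covering theorem (de Guzm\'an, p.~6). The reason is that a ball $B_r(y)$ with $x\in B_{r/3}(y)$ satisfies $B_{2r/3}(x)\subset B_r(y)\subset B_{4r/3}(x)$. So it is a convex set trapped between two balls centred at $x\in A$ with radius ratio $2$, which is the kind of family Morse's theorem handles. You instead re-run the Besicovitch selection with the uncovered points $a_j$ playing the role of the centres. The citation is shorter. Your route is self-contained, but it makes you do the one non-routine step yourself, and you left it unverified.

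That step does close with your constants $\frac13$ and $\frac34$:
\begin{itemize}
\item Take $i<i'<j$ with $r_i,r_{i'}>Kr_j$ and both balls meeting $B_{r_j}(y_j)$. The intersection condition and fact (a) for the pairs $(i,j)$ and $(i',j)$ put $|y_i-y_j|$ and $|y_{i'}-y_j|$ within $r_j$ of $r_i$ and $r_{i'}$ respectively.
\item If the angle between $y_i-y_j$ and $y_{i'}-y_j$ is below $\theta_0$, the law of cosines gives $|y_{i'}-y_i|\leqslant|r_i-r_{i'}|+C(r_j+\theta_0 r_{i'})$.
\item Fact (a) for the pair $(i,i')$ only gives $|y_{i'}-y_i|>r_i-r_{i'}/3$. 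Compared with the Besicovitch bound $\geqslant r_i$, this loses an amount comparable to $r_{i'}$.
\item A margin of order $r_{i'}$ still survives. It equals $\frac23 r_{i'}$ if $r_{i'}\leqslant r_i$. It equals $2r_i-\frac43 r_{i'}\geqslant\frac16 r_{i'}$ if $r_{i'}>r_i$, using $r_{i'}\leqslant\frac43 r_i$.
\item Since $r_j<r_{i'}/K$, taking $K$ large and $\theta_0$ small gives $|y_{i'}-y_i|<r_i-r_{i'}/3$. Hence $|a_{i'}-y_i|<r_i$, i.e.\ $a_{i'}\in B_{r_i}(y_i)$, which contradicts (a).
\end{itemize}
This is the correct form of the contradiction, not ``$y_{i'}$ lands in a $B_{r_i/3}$-type region''.

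The remaining steps are fine as written: the selection rule, disjointness of the balls $B_{r_i/12}(y_i)$, the covering argument, the volume count for comparable radii, and the greedy colouring. One small repair is needed in your reduction. Take the supremum only over balls with $B_{r/3}(y)\cap A\neq\varnothing$, since a huge ball that merely meets $A$ need not contain it. For such balls, $r>\frac32\diam A$ already gives $A\subset B_r(y)$.
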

	Lemma~\ref{Morse1} is a particular case of the Morse covering theorem (see~p.6~in~\cite{Guzman1975}).

	\begin{Le}\label{pok1}
		Let $\alpha\in[0,d]$ and $q\in(0,\infty]$. There exist the constants $C(\alpha,q)$ and $\theta(d)$ such that for any bounded $A\subset\mathbb{R}^d$, $\mathcal{H}_{\alpha,q}(A)<a$ for some constant $a>0$, and for any $\varepsilon>0$ there exists a family of balls $\mathfrak{B}$ such that
		\begin{enumerate}[1)]
			\item $\mathfrak{B}$ is a covering of $A$,
			\item the center of any ball in~$\mathfrak{B}$ lies in~$A$ and its radius does not exceed~$\varepsilon$,
			
			\item the family~$\mathfrak{B}$ may be split into~$\theta$ disjoint subfamilies~$\mathfrak{B}^j$ such that
			\begin{equation}
				\|\{r_i| \ {B_{r_i}(x_i)\in\mathfrak{B}^j}\}\|_{\ell_{\alpha,\alpha q}}^{\alpha q}\leqslant Ca,
			\end{equation}
			in the case $q<\infty$ or 
			\begin{equation}
				\|\{r_i| \ {B_{r_i}(x_i)\in\mathfrak{B}^j}\}\|_{\ell_{\alpha,\infty}}^{\alpha}\leqslant Ca
			\end{equation}
			in the case $q=\infty$.
		\end{enumerate}
	\end{Le}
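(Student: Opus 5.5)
Let $\delta=\min(\varepsilon,1)$ (and assume without loss of generality that $\delta$ is also small compared with the scale in the definition of $\mathcal{H}_{\alpha,q}(A,\delta)$). Since $\mathcal{H}_{\alpha,q}(A)<a$ and $\mathcal{H}_{\alpha,q}(A,\delta)$ is monotone in $\delta$, there is a covering $\{B_j\}$ of $A$ by sets with $\diam B_j<\delta/4$ such that
\begin{equation*}
\sum_k\Big(\sum_{\diam B_j\in\Delta_k}(\diam B_j)^\alpha\Big)^q<a
\end{equation*}
(or the sup-version when $q=\infty$). Discard every $B_j$ not meeting $A$.

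\emph{Step 1: centered balls.} For each remaining $B_j$ pick $y_j\in B_j\cap A$. Set $\rho_j=3\diam B_j$ and take the ball $B_{\rho_j}(y_j)$. Two cases arise.
\begin{itemize}
\item If $\diam B_j>0$, then every $x\in B_j$ satisfies $|x-y_j|\leqslant\diam B_j=\rho_j/3$. Strictly one should use $\rho_j=3\diam B_j+$ a tiny margin, or take $\rho_j$ slightly larger while staying in the same dyadic block up to a bounded shift; this does not affect the argument.
\item If $\diam B_j=0$, the set is a single point. Replace it by a ball of radius $\eta_j$ with $\sum\eta_j^{\alpha}$ arbitrarily small; by the quasi-triangle property of the Lorentz quasi-norm this costs an arbitrarily small additive amount.
\end{itemize}
Hence every $x\in A$ lies in $B_{\rho_j/3}(y_j)$ for some $j$. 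The radii satisfy $\rho_j\leqslant\varepsilon$, and each centre lies in $A$.

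\emph{Step 2: Morse covering.} Apply Lemma~\ref{Morse1} to the family $\{B_{\rho_j}(y_j)\}$. This gives a subfamily $\mathfrak{B}$ that covers $A$ and splits into $\theta(d)$ disjoint subfamilies $\mathfrak{B}^1,\dots,\mathfrak{B}^\theta$. Items 1) and 2) of the lemma are then immediate.

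\emph{Step 3: the norm estimate.} This step is the main point, though it is routine. Each $\mathfrak{B}^i$ is a subfamily of $\{B_{\rho_j}(y_j)\}$. The radii $\rho_j=3\diam B_j$ occupy dyadic block $\Delta_{k-\lceil\log_2 3\rceil}$ or a neighbouring one whenever $\diam B_j\in\Delta_k$. Passing to a subfamily only decreases the block sums. Therefore
\begin{equation*}
\sum_k\Big(\sum_{\rho_j\in\Delta_k,\ B_{\rho_j}(y_j)\in\mathfrak{B}^i}\rho_j^\alpha\Big)^q\leqslant C_1(\alpha,q)\sum_k\Big(\sum_{\diam B_j\in\Delta_k}(\diam B_j)^\alpha\Big)^q<C_1a .
\end{equation*}
The factor $3^{\alpha q}$ comes from scaling, and a factor $2^{\max(q-1,0)}$ comes from a block possibly receiving contributions from two original blocks. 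Finally, Lemma~\ref{Lornor}, i.e. the equivalence~\eqref{NHq} between the block quantity and $\|\cdot\|^{\alpha q}_{\ell_{\alpha,\alpha q}}$ (respectively~\eqref{NHi} for $q=\infty$), converts this into $\|\{r_i\}\|^{\alpha q}_{\ell_{\alpha,\alpha q}}\leqslant Ca$ with $C=C(\alpha,q)$. The only care needed is to keep the constants independent of $A$, $a$ and $\varepsilon$, which holds since they arise only from the fixed dilation factor $3$, the bounded dyadic shift, and the Lorentz-norm equivalence.
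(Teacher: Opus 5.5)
Your proof is correct and follows the paper's argument: take a fine near-optimal covering, replace each set $D$ by a ball of radius (about) $3\diam D$ centred at a point of $D\cap A$, apply Lemma~\ref{Morse1}, and note that passing to a subfamily and dilating by a fixed factor only cost a constant. The differences are cosmetic. The paper works directly with the Lorentz quasi-norm, which is exactly homogeneous under dilation, so it needs no dyadic-shift bookkeeping. Your margin for open balls and your treatment of one-point sets fill in small technicalities that the paper passes over.
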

	This lemma for classical Hausdorff measure was proved in~\cite[Lemma~3.4]{My}.
	\begin{proof}
		We present the proof in the case $q<\infty$. The proof in the case $q=\infty$ is similar.
		
		Since $\mathcal{H}_{\alpha,q}(A)<a$ there exists a family $\mathfrak{A}$ such that $\mathfrak{A}$ is a covering of $A$, for any $D\in\mathfrak{A}$ we have $\diam D<\frac{\eps}{1000}$, and
		\begin{equation}
			\|\{\diam D_i|\ D_i\in\mathfrak{A}\}\|_{\ell_{\alpha,\alpha q}}^{\alpha q}\lesssim a.
		\end{equation}
		
		The family $\mathfrak{B}_0$ is defined in the following way: for any $D\in \mathfrak{A}$ we choose a point $x\in D\cap A$ and put the ball $B_{3\diam D}(x)$ into $\mathfrak{B}_0$. The family $\mathfrak{B}_0$ and the set $A$ satisfy the conditions of Lemma~\ref{Morse1}. Let $\mathfrak{B}$ be the subfamily of $\mathfrak{B}_0$ provided by Lemma~\ref{Morse1}. The family $\mathfrak{B}$ splits into $\theta(d)$ disjoint subfamilies $\mathfrak{B}^j$ that satisfy the estimate:
		\begin{equation}
			\|\{r_i|\ B_{r_i}(x_i)\in\mathfrak{B}^j\}\|_{\ell_{\alpha,\alpha q}}^{\alpha q}\leqslant
			\|\{r_i|\ B_{r_i}(x_i)\in\mathfrak{B}_0\}\|_{\ell_{\alpha,\alpha q}}^{\alpha q}\leqslant
			\|\{3\diam D_i|\ D_i\in\mathfrak{A}\}\|_{\ell_{\alpha,\alpha q}}^{\alpha q}\leqslant
			3^{\alpha q}a.
		\end{equation}
		
	\end{proof}

	\begin{Le}\label{PPP}
		Let $\mu$ be a signed measure, let $A_{+}$ and $A_{-}$ be the sets of its Hahn decomposition, let $\mu_{+}$ and $\mu_{-}$ be its positive and negative parts. Consider the set 
		\begin{equation}
			P_{+,\varepsilon}=\Big\{x\in A_{+}\mid\exists \delta(x) \ such \ that \  \forall r<\delta(x) \ \  \mu_{-}(B_r(x))\leqslant\varepsilon\mu_{+}(B_r(x))\Big\}.
		\end{equation}
		Then $\mu_{+}(A)=\mu_{+}(P_{+,\varepsilon})$.
		
	\end{Le}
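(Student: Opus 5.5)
The statement should be read with $A=A_{+}$, or equivalently as $\mu_{+}(A)=\mu_{+}(A\cap P_{+,\varepsilon})$ for Borel $A$; both follow once we show that $\mu_{+}(A_{+}\setminus P_{+,\varepsilon})=0$. Recall that $\mu_{+}$ is concentrated on $A_{+}$. The plan is to deduce this from the Besicovitch differentiation theorem for Radon measures on $\mathbb{R}^d$ (see~\cite[Chapter~2]{Mattila}). That theorem applies because $\mu$ has bounded variation, so $\mu_{+}$ and $\mu_{-}$ are finite Radon measures.

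First, I use that $\mu_{+}$ and $\mu_{-}$ are mutually singular: $\mu_{-}(A_{+})=0$ and $\mu_{+}(A_{-})=0$. By the differentiation theorem, for $\mu_{+}$-almost every $x$ the limit
\begin{equation*}
D_{\mu_{+}}\mu_{-}(x)=\lim\limits_{r\rightarrow0}\frac{\mu_{-}(B_r(x))}{\mu_{+}(B_r(x))}
\end{equation*}
exists and is finite. Moreover, the Radon--Nikodym decomposition gives
\begin{equation*}
\mu_{-}(E)=\int_E D_{\mu_{+}}\mu_{-}\,d\mu_{+}+\mu_{-}^{s}(E)
\end{equation*}
for Borel sets $E$, where $\mu_{-}^{s}$ is singular with respect to $\mu_{+}$. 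Since $\mu_{-}\perp\mu_{+}$, the absolutely continuous part vanishes. Hence $D_{\mu_{+}}\mu_{-}=0$ $\mu_{+}$-almost everywhere. Concretely, integrating $D_{\mu_{+}}\mu_{-}$ over $A_{+}$ gives at most $\mu_{-}(A_{+})=0$.

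Second, I handle the denominator. For $\mu_{+}$-almost every $x$ we have $\mu_{+}(B_r(x))>0$ for all $r>0$. Indeed, the set of points where this fails is contained in the complement of $\supp\mu_{+}$, which has $\mu_{+}$-measure zero.

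Now take $x\in A_{+}$ outside these two null sets. Since the ratio $\mu_{-}(B_r(x))/\mu_{+}(B_r(x))$ tends to $0$, there is $\delta(x)$ such that $\mu_{-}(B_r(x))\leqslant\varepsilon\mu_{+}(B_r(x))$ for all $r<\delta(x)$. So $x\in P_{+,\varepsilon}$. Thus $A_{+}\setminus P_{+,\varepsilon}$ is $\mu_{+}$-null, and the claim follows.

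The main obstacle is measure-theoretic bookkeeping. One point is the measurability of $P_{+,\varepsilon}$. I avoid this by working with outer measure: $P_{+,\varepsilon}$ contains the Borel set $A_{+}\cap G$, where $G$ is the set on which the density exists and equals zero. This gives $\mu_{+}(P_{+,\varepsilon})\geqslant\mu_{+}(A_{+}\cap G)=\mu_{+}(A_{+})$. The other point is that the differentiation theorem is needed in its form with respect to a general Radon measure, not Lebesgue measure. This is exactly where Besicovitch's covering argument is used, and I take it as known.
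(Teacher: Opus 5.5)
Your proof is correct. You read $A$ as $A_{+}$, or equivalently prove $\mu_{+}(A)=\mu_{+}(A\cap P_{+,\varepsilon})$ for Borel $A$, which is the form actually used in the proof of Lemma~\ref{frost}; combining the Besicovitch differentiation theorem with $\mu_{-}(A_{+})=0$ then gives $D_{\mu_{+}}\mu_{-}=0$ $\mu_{+}$-a.e., and hence $\mu_{+}(A_{+}\setminus P_{+,\varepsilon})=0$. The paper does not prove this lemma itself but defers to Lemma~4 of \cite{StolyarovWojciechowski2014}, and your differentiation-of-measures argument is the standard proof of this kind of statement; the only cosmetic point is that Mattila states the theorem for closed balls, and passing to the open balls $B_r(x)$ is immediate by continuity from below.
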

	See~\cite{StolyarovWojciechowski2014} for the proof of a similar lemma (Lemma 4 of that paper). Consider the set $P_{+,\varepsilon}^{(N)}$ given by the formula
	\begin{equation}
		P_{+,\varepsilon}^{(N)}=\Big\{x\in A_{+}\mid\  \forall r<\frac{1}{N} \ \  \mu_{-}(B_r(x))\leqslant\varepsilon\mu_{+}(B_r(x))\Big\}.
	\end{equation}
	
	\begin{Le}
		Let $x\in P_{+,\varepsilon}^{(N)}$ and let $r<\frac{1}{3N}$. Then
		\begin{equation}
			\int \varphi\left(\frac{y-x}{r} \right)d\mu_{-}(y)\leqslant \varepsilon\int \varphi\left(\frac{y-x}{r} \right)d\mu_{+}(y)
		\end{equation}
		for any radial non-negative test-function $\varphi$ supported in $B_3(0)$ that decreases as the radius grows.
	\end{Le}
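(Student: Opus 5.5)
The plan is to write the integral against $\mu_{-}$ as a superposition of measures of balls, compare ball by ball using the defining property of $P_{+,\varepsilon}^{(N)}$, and then integrate back.

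\textbf{Step 1 (layer-cake representation).} Since $\varphi$ is radial, non-negative and non-increasing in the radius, each superlevel set $\{z\mid \varphi(z)>t\}$ with $t>0$ is a ball $B_{\rho(t)}(0)$ (open or closed) of radius $\rho(t)\leqslant 3$, because $\supp\varphi\subset B_3(0)$. For any non-negative measure $\nu$ we then have
\begin{equation*}
\int \varphi\Big(\frac{y-x}{r}\Big)d\nu(y)=\int_0^{\infty}\nu\big(\{y\mid \varphi(\tfrac{y-x}{r})>t\}\big)\,dt=\int_0^{\infty}\nu\big(B_{r\rho(t)}(x)\big)\,dt,
\end{equation*}
where $B_{r\rho(t)}(x)$ denotes the open or closed ball according to the level set.

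\textbf{Step 2 (pointwise comparison).} Apply Step 1 with $\nu=\mu_{-}$ and with $\nu=\mu_{+}$. It then suffices to show that for every $t>0$
\begin{equation*}
\mu_{-}\big(B_{r\rho(t)}(x)\big)\leqslant\varepsilon\,\mu_{+}\big(B_{r\rho(t)}(x)\big).
\end{equation*}
Integrating this inequality in $t$ gives the claim.

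\textbf{Step 3 (verifying the radius condition).} Since $r<\frac{1}{3N}$ and $\rho(t)\leqslant 3$, we have $r\rho(t)<\frac1N$. For open balls the required inequality is then exactly the definition of $P_{+,\varepsilon}^{(N)}$. A level set can also be a closed ball $\overline{B}_{s}(x)$ with $s=r\rho(t)<\frac1N$. In that case write $\overline B_s(x)=\bigcap_m B_{s_m}(x)$ with $s_m\downarrow s$ and $s_m<\frac1N$. Both $\mu_\pm$ are finite, so continuity of measure from above lets us pass to the limit in $\mu_{-}(B_{s_m}(x))\leqslant\varepsilon\mu_{+}(B_{s_m}(x))$.

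\textbf{Main obstacle.} The only point needing care is this open/closed ball and boundary issue in Step 3, which the limiting argument handles. The level $t=\sup\varphi$ and the degenerate radius $\rho(t)=0$ contribute nothing, since those level sets are empty or have $t$-measure zero.
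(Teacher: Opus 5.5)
Your proof is correct and follows essentially the paper's route. The paper proves the inequality for non-negative linear combinations of indicator functions of balls, where it is immediate from the definition of $P_{+,\varepsilon}^{(N)}$, and then passes to a uniform limit; your layer-cake formula is the exact continuous form of that decomposition, and your continuity-from-above step for closed superlevel balls makes explicit a boundary detail the paper leaves implicit.
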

	\begin{proof}
		The statement of the lemma is trivial in the case $\varphi$ is a linear combination of characteristic functions of balls. An arbitrary function $\varphi$ can be obtained as a uniform limit of functions of this type.
	\end{proof}
	\begin{Cor}\label{noc}
		Let $x\in P_{+,\varepsilon}^{(N)}$ and let $r<\frac{1}{3N}$. Then
		\begin{equation}
			\int \varphi\left(\frac{y-x}{r} \right)d\mu_{+}(y)\leqslant \frac{1}{1-\varepsilon}\int \varphi\left(\frac{y-x}{r} \right)d\mu(y)
		\end{equation}
		for any radial non-negative test-function $\varphi$ supported in $B_3(0)$ that decreases as the radius grows.
	\end{Cor}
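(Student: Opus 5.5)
This is a one-line consequence of the preceding lemma, obtained by splitting $\mu$ into its positive and negative parts. Write $\mu=\mu_{+}-\mu_{-}$, so that
\begin{equation}
\int \varphi\left(\frac{y-x}{r}\right)d\mu(y)=\int \varphi\left(\frac{y-x}{r}\right)d\mu_{+}(y)-\int \varphi\left(\frac{y-x}{r}\right)d\mu_{-}(y).
\end{equation}
The idea is to bound the subtracted $\mu_{-}$ term from above by the previous lemma, which turns the identity into a lower bound for the $\mu$-integral.

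Fix $x\in P_{+,\varepsilon}^{(N)}$ and $r<\frac{1}{3N}$. Let $\varphi$ be a radial, non-negative, radially decreasing test function supported in $B_3(0)$. Then $\varphi$ satisfies the hypotheses of the preceding lemma, which gives
\begin{equation}
\int \varphi\left(\frac{y-x}{r}\right)d\mu_{-}(y)\leqslant \varepsilon\int \varphi\left(\frac{y-x}{r}\right)d\mu_{+}(y).
\end{equation}
Substituting this into the decomposition yields
\begin{equation}
\int \varphi\left(\frac{y-x}{r}\right)d\mu(y)\geqslant (1-\varepsilon)\int \varphi\left(\frac{y-x}{r}\right)d\mu_{+}(y).
\end{equation}
Dividing by $1-\varepsilon$ gives the claimed inequality.

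The division requires $\varepsilon<1$. This is implicit in the statement, since for $\varepsilon\geqslant 1$ the constant $\frac{1}{1-\varepsilon}$ is meaningless or negative, and I take it as a standing assumption. Beyond that, one only needs all the integrals to be finite so that the subtraction is legitimate. This holds because $\varphi$ is bounded and $\mu$ has bounded variation, so there is no real obstacle.
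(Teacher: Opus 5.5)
Your proof is correct and is exactly the intended argument. The paper states this corollary without proof, as the immediate consequence of the preceding lemma via $\mu=\mu_{+}-\mu_{-}$; the assumption $\varepsilon<1$ is implicit there too, since it is only applied with $\varepsilon=\frac{1}{2}$.
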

	\begin{proof}[Prove Lemma~\ref{frost}]
		Suppose that $A$ is a set such that $\mathcal{H}_{\alpha,q}(A)<a$, we will prove that $|\mu|(A)\lesssim a^\gamma$. Due to Lemma~\ref{PPP} it suffices to prove that $\mu_+(P_{+,\frac{1}{2}})\lesssim a^\gamma$, this provides the bound for $\mu_+(A)$. The bound for $\mu_-(A)$ is similar. Note that $P_{+,\frac{1}{2}}=\cup P_{+,\frac{1}{2}}^{(N)}$, so $\mu_{+}(P_{+,\frac{1}{2}})\leqslant 2\mu_{+}(P_{+,\frac{1}{2}}^{(N)})$ for $N$ sufficiently large. Let $K$ be a compact subset of $P_{+,\frac{1}{2}}^{(N)}$ such that $\mu_{+}(P_{+,\frac{1}{2}}^{(N)})\leqslant2\mu_{+}(K)$. We will prove that $\mu_{+}(K)\lesssim a^\gamma$.
		Let $\mathfrak{B}$ be a covering of $K$ provided by Lemma~\ref{pok1} such that the radius of any ball does not exceed $\frac{1}{3N}$ ($K$ is compact set, so we may assume that $\mathfrak{B}$ is finite). We can write (in the case~$q<\infty$)
		\begin{equation}
			\begin{aligned}
				\mu_{+}(K)&\lesssim
				\sum_{B_{r_i}(x_i)\in\mathfrak{B}}\int\limits_{B_{r_i}(x_i)}\varphi\left(\frac{y-x_i}{r_i}\right)d\mu_{+}(y)\leqslant
				\sum_{B_{r_i}(x_i)\in\mathfrak{B}}\int\limits_{B_{3r_i}(x_i)}\varphi\left(\frac{y-x_i}{r_i}\right)d\mu_{+}(y)\\
				&\!\!\!\!\!\overset{\mathrm{Cor}~\ref{noc}}{\lesssim}
				\sum_{B_{r_i}(x_i)\in\mathfrak{B}}\int\limits_{B_{3r_i}(x_i)}\varphi\left(\frac{y-x_i}{r_i}\right)d\mu(y)\lesssim
				\sum_{j=1}^{M}\sum_{B_{r_i}(x_i)\in\mathfrak{B}^j}\int\limits_{\mathbb{R}^d}\varphi\left(\frac{y-x_i}{r_i}\right)d\mu(y)\\
				&\lesssim 
				\sum_{j=1}^{M}\left(\|\{r_i|\ B_{r_i}(x_i)\in\mathfrak{B}^j\}\|_{\ell_{\alpha,\alpha q}}^{\alpha q}\right)^\gamma\lesssim
				a^\gamma.
			\end{aligned}
		\end{equation}
		
		The case $q=\infty$ is similar.
	\end{proof}

		\subsection*{Proof of Lemma~\ref{strash}.}
	
	Fix a family of disjoint balls $\mathfrak{B}$. Let $\{r_j\}$ be the sequence of radii of the balls in the family $\mathfrak{B}$. Let $\ell_{p,h}(\mathfrak{B},\alpha)$ be the Lorentz space on sequences with the weight $r_j^\alpha$, we mean that $\ell_{p,h}(\mathfrak{B},\alpha)=L_{p,h}(\mathbb{N},\nu_\alpha)$  there $\nu_{\alpha}\{j\}=r_j^\alpha$. Let $\{e_j\}$ be the standard basis in the sequence space and let the operator $T$ act by the rule
	\begin{equation}
		T(e_j)=\varphi_{x_j,r_j}.
	\end{equation}
	By Theorem~\ref{SW} $T$ is continuous from $\ell_{p,1}(\mathfrak{B},\alpha_0)$ to $X_0$ and from $\ell_{p,1}(\mathfrak{B},\alpha_1)$ to $X_1$. 
		
	To prove the lemma it suffices to obtain the estimate
	\begin{equation}
		\|\chi_{\mathbb{N}}\|_{(\ell_{p,1}(\mathfrak{B},\alpha_0),\ell_{p,1}(\mathfrak{B},\alpha_1))_{\theta,h}}\lesssim
		\|\{r_j| \ {B_{r_j}(x_j)\in \mathfrak{B}}\}\|^{\frac{\alpha_{\theta}}{p}}_{\ell_{\alpha_\theta,\frac{h\alpha_\theta}{p}}}.
	\end{equation}
	Firstly we prove the inequality
	\begin{equation}
		\|\chi_{\mathbb{N}}\|_{(\ell_{p,1}(\mathfrak{B},\alpha_0),\ell_{p,1}(\mathfrak{B},\alpha_1))_{\theta,h}}\approx
		\|\chi_{\mathbb{N}}\|_{(\ell_{p}(\mathfrak{B},\alpha_0),\ell_{p}(\mathfrak{B},\alpha_1))_{\theta,h}}.
	\end{equation}
	This inequality is based on the fact that for characteristic functions their norms in the spaces $L_{p,1}$ and $L_p$ are equivalent. From this fact it is easy to see that
	\begin{equation}
		K(\chi_{\mathbb{N}},t,\ell_{p,1}(\mathfrak{B},\alpha_0),\ell_{p,1}(\mathfrak{B},\alpha_1))\approx K(\chi_{\mathbb{N}},t,\ell_{p}(\mathfrak{B},\alpha_0),\ell_{p}(\mathfrak{B},\alpha_1)).
	\end{equation}
	The equivalence of $K$ functionals provides us with the equivalence of the interpolation norms. Let $\nu=\nu_{\alpha_0}$. Theorem~3.7 from~\cite{Gilbert} gives us the equivalent norm
	\begin{equation}
		\|\chi_{\mathbb{N}}\|_{(\ell_{p}(\mathfrak{B},\alpha_0),\ell_{p}(\mathfrak{B},\alpha_1))_{\theta,h}}
		=
		\Big(\int\limits_0^\infty\Big(t^{1-\theta}\Big(\int\limits_{\scalebox{0.6}{$r_j^{\frac{\alpha_1-\alpha_0}{p}}<t^{-1}$}}r_j^{\alpha_1-\alpha_0}d\nu\Big)^{\frac{1}{p}}\Big)^h\frac{dt}{t}\Big)^{\frac{1}{h}}.
	\end{equation}
	
	The following estimate finishes the proof of the lemma:
	\begin{equation}
		\begin{aligned}
			&\Big(\int\limits_0^\infty\Big(t^{1-\theta}\Big(\int\limits_{\scalebox{0.6}{$r_j^{\frac{\alpha_1-\alpha_0}{p}}<t^{-1}$}}r_j^{\alpha_1-\alpha_0}d\nu\Big)^{\frac{1}{p}}\Big)^h\frac{dt}{t}\Big)^{\frac{1}{h}}
			=
			\Big(\int\limits_0^\infty\Big(t^{1-\theta}\Big(\sum\limits_{\scalebox{0.6}{$r_j>t^{\frac{p}{\alpha_0-\alpha_1}}$}}r_j^{\alpha_1}\Big)^{\frac{1}{p}}\Big)^h\frac{dt}{t}\Big)^{\frac{1}{h}}\\
			&\leqslant
			\Big(\int\limits_0^\infty\big(t^{1-\theta}\big(m_{\mathfrak{B}}(t^{\frac{p}{\alpha_0-\alpha_1}})t^{\frac{\alpha_1p}{\alpha_0-\alpha_1}}\big)^{\frac{1}{p}}\big)^h\frac{dt}{t}\Big)^{\frac{1}{h}}
			=
			\Big(\frac{\alpha_0-\alpha_1}{p}\int\limits_0^\infty \big(\tau^{(1-\theta)\frac{\alpha_0-\alpha_1}{p}}m_{\mathfrak{B}}(\tau)^{\frac{1}{p}}\tau^{\frac{\alpha_1}{p}}\big)^h\frac{d\tau}{\tau}\Big)^{\frac{1}{h}}\\
			&=
			\Big(\frac{\alpha_0-\alpha_1}{p}\int\limits_0^\infty \big(\tau^{\alpha_{\theta}}m_{\mathfrak{B}}(\tau)\big)^\frac{h}{p}\frac{d\tau}{\tau}\Big)^{\frac{1}{h}}
			\approx
			\|\{r_j| \ {B_{r_j}(x_j)\in \mathfrak{B}}\}\|^{\frac{\alpha_{\theta}}{p}}_{\ell_{\alpha_\theta,\frac{h\alpha_\theta}{p}}}.
		\end{aligned}
	\end{equation}

	\subsection*{Proof of Lemma~\ref{p+p}}

	To prove this lemma we need an auxiliary result.

	\begin{Le}\label{tr}
		Let $\varepsilon>0$. There exists a constant $C_\varepsilon$ such that the inequality
		\begin{equation}
			\|f+g\|_{L_{p,q}}\leqslant (1+\varepsilon)\|f\|_{L_{p,q}}+C_\varepsilon\|g\|_{L_{p,q}}
		\end{equation}
		is true.
	\end{Le}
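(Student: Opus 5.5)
The plan is to work with the rearrangement form of the quasi-norm,
\[
\|h\|_{L_{p,q}}^q=\frac{q}{p}\int_0^\infty h^*(t)^q t^{\frac{q}{p}}\frac{dt}{t},
\]
and to use the elementary subadditivity of rearrangements: for any $0<\lambda<1$,
\[
(f+g)^*(t)\leqslant f^*(\lambda t)+g^*((1-\lambda)t).
\]
This holds because the distribution function satisfies $m_{f+g}(a+b)\leqslant m_f(a)+m_g(b)$. Pick $\lambda=\lambda(\varepsilon)$ close to $1$. A dilation $t\mapsto \lambda t$ changes the $L_{p,q}$ quasi-norm only by the factor $\lambda^{-1/p}$. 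Therefore the function $t\mapsto f^*(\lambda t)$ has $L_{p,q}$-norm (taken in the weighted $L_q(t^{q/p}dt/t)$ sense) equal to $\lambda^{-1/p}\|f\|_{L_{p,q}}$. Similarly, $t\mapsto g^*((1-\lambda)t)$ has norm $(1-\lambda)^{-1/p}\|g\|_{L_{p,q}}$.

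The next step depends on $q$. When $q\geqslant 1$, the space $L_q((0,\infty),t^{q/p}dt/t)$ is normed, so Minkowski's inequality applied to the pointwise bound gives directly
\[
\|f+g\|_{L_{p,q}}\leqslant \lambda^{-1/p}\|f\|_{L_{p,q}}+(1-\lambda)^{-1/p}\|g\|_{L_{p,q}}.
\]
Choosing $\lambda$ with $\lambda^{-1/p}\leqslant 1+\varepsilon$ then gives the claim with $C_\varepsilon=(1-\lambda)^{-1/p}$.

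If $q<1$ is also needed, I would not use Minkowski. Instead I would use the elementary inequality $(a+b)^q\leqslant a^q+b^q$. This yields $\|f+g\|^q\leqslant \lambda^{-q/p}\|f\|^q+(1-\lambda)^{-q/p}\|g\|^q$. That bound is not of the stated form, since it controls the $q$-th powers rather than the norms themselves. In that case I would handle things directly: I would split each value of $t$ according to whether $g^*((1-\lambda)t)\leqslant\delta f^*(\lambda t)$ or not. On the first set the integrand is at most $(1+\delta)^q f^*(\lambda t)^q$. On the second set the integrand is bounded by a constant multiple of $g^*((1-\lambda)t)^q$. This gives $\|f+g\|^q\leqslant (1+\delta)^q\lambda^{-q/p}\|f\|^q+C\|g\|^q$.

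The main obstacle is this quasi-normed range $q<1$. There, passing from $q$-th powers back to norms only gives $(A^q+B^q)^{1/q}\leqslant 2^{1/q-1}(A+B)$, which loses the sharp leading constant. In the application, Lemma~\ref{p+p} only needs the $q$-th power form with leading constant close to $1$, and the paper works with $q\geqslant1$. So I would state and prove the lemma for $1\leqslant q<\infty$ via the Minkowski argument, and add the $q$-th power variant as a remark for $q<1$.
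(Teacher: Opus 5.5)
For $1\leqslant q<\infty$ your argument is correct, and it takes a different route from the paper's. You pass to rearrangements, use $(f+g)^*(t)\leqslant f^*(\lambda t)+g^*((1-\lambda)t)$, and then apply Minkowski's inequality in the weighted space $L_q\big((0,\infty),t^{q/p}\,dt/t\big)$. This gives the explicit bound $\|f+g\|_{L_{p,q}}\leqslant\lambda^{-1/p}\|f\|_{L_{p,q}}+(1-\lambda)^{-1/p}\|g\|_{L_{p,q}}$ for every $p>0$, even though $\|\cdot\|_{L_{p,q}}$ itself need not be a norm. The paper instead works with distribution functions, $m_{f+g}(t)\leqslant m_f((1-\delta)t)+m_g(\delta t)$, and never uses a triangle inequality. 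It applies the elementary estimate $|a+b|^r\leqslant(1+\alpha)|a|^r+C_{r,\alpha}|b|^r$ twice. The first application, with $r=q/p$ inside the integral, gives $\|f+g\|^q_{L_{p,q}}\leqslant(1+\delta)(1-\delta)^{-q}\|f\|^q_{L_{p,q}}+C_\delta\|g\|^q_{L_{p,q}}$. The second, with $r=1/q$, takes the $q$-th root. Your version is cleaner and has explicit constants when $q\geqslant1$; the paper's is uniform in $q\in(0,\infty)$.

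The gap is the range $0<q<1$, which you drop on the basis of a false claim. Returning from $q$-th powers to norms does not force the constant $2^{1/q-1}$ in front of $\|f\|_{L_{p,q}}$; that only happens if you insist on symmetric constants. Since $1/q>1$, convexity of $x\mapsto x^{1/q}$ gives, for every $0<\theta<1$ and $X,Y\geqslant0$,
\[
(X+Y)^{1/q}=\Big(\theta\frac{X}{\theta}+(1-\theta)\frac{Y}{1-\theta}\Big)^{1/q}\leqslant\theta^{1-\frac1q}X^{1/q}+(1-\theta)^{1-\frac1q}Y^{1/q},
\]
and $\theta^{1-1/q}\to1$ as $\theta\to1$. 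Apply this to the bound you already have, $\|f+g\|^q_{L_{p,q}}\leqslant\lambda^{-q/p}\|f\|^q_{L_{p,q}}+(1-\lambda)^{-q/p}\|g\|^q_{L_{p,q}}$. It yields
\[
\|f+g\|_{L_{p,q}}\leqslant\theta^{1-\frac1q}\lambda^{-\frac1p}\|f\|_{L_{p,q}}+(1-\theta)^{1-\frac1q}(1-\lambda)^{-\frac1p}\|g\|_{L_{p,q}},
\]
which is the statement once $\theta$ and $\lambda$ are close to $1$. The dichotomy ``$B\leqslant\eta A$ or not'' that you used pointwise also works here, applied to $A=\|f\|_{L_{p,q}}$ and $B=\|g\|_{L_{p,q}}$. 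This asymmetric splitting is precisely the paper's final step with the constant $C_{\frac1q,\delta}$. So the restriction to $q\geqslant1$ and the separate ``$q$-th power variant'' are unnecessary. As you say, the restriction is harmless for Lemma~\ref{p+p} and Theorem~\ref{resL}, where $q>1$. But the lemma as stated holds for every $q>0$, since Lorentz spaces are defined for all $q\in(0,\infty)$.
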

	The case $1\leqslant p=q$ is simple.
	\begin{proof}
		We use the following inequalities for any $0<\delta<1$
		\begin{equation}
			m_{f+g}(t)\leqslant m_f((1-\delta)t)+m_g(\delta t);
		\end{equation}
		for every $r>0$ and $\alpha>0$ there exists a constant $C_{r,\alpha}$ such that the inequality 
		\begin{equation}
			|a+b|^r\leqslant (1+\alpha)|a|^r+C_{r,\alpha}|b|^r.
		\end{equation}
		We can write the estimate
		\begin{equation}
			\begin{aligned}
				\|f+g\|^q_{L_{p,q}}
				&=
				q\int_0^\infty (m_{f+g}(t)t^p)^{\frac{q}{p}}\frac{dt}{t}
				\leqslant
				q\int_0^\infty \big((m_{f}((1-\delta)t)+m_g(\delta t))t^p\big)^{\frac{q}{p}}\frac{dt}{t}\\
				&\leqslant
				q\int_0^\infty (1+\delta)(m_{f}((1-\delta)t)t^p)^{\frac{q}{p}}\frac{dt}{t}+
				q\int_0^\infty C_{\frac{q}{p},\delta}(m_{g}(\delta t)t^p)^{\frac{q}{p}}\frac{dt}{t}\\
				&=
				(1+\delta)(1-\delta)^{-q}\|f\|^q_{L_{p,q}}+C_{\frac{q}{p},\delta}\delta^{-q}\|g\|^q_{L_{p,q}}.
			\end{aligned}
		\end{equation}
		The following estimate finishes the proof
		\begin{equation}
			\begin{aligned}
				\|f+g\|_{L_{p,q}}
				&\leqslant \left((1+\delta)(1-\delta)^{-q}\|f\|^q_{L_{p,q}}+C_{\frac{q}{p},\delta}\delta^{-q}\|g\|^q_{L_{p,q}}\right)^{\frac{1}{q}}\\
				&\leqslant
				(1+\delta)^{1+\frac{1}{q}}(1-\delta)^{-1}\|f\|_{L_{p,q}}+C_{\frac{1}{q},\delta}C_{\frac{q}{p},\delta}^{\frac{1}{q}}\delta^{-1}\|g\|_{L_{p,q}}.
			\end{aligned}
		\end{equation}
	\end{proof}

	\begin{proof}[Proof of Lemma~\ref{p+p}]
		For some function $h$ the functions $h_{\geqslant\varepsilon}$ and $h_{<\varepsilon}$ are defined by formulas
		\begin{equation}
			h_{\geqslant\varepsilon}=\chi_{\{|h|\geqslant\varepsilon\}}h,
		\end{equation}
		\begin{equation}
			h_{<\varepsilon}=\chi_{\{|h|<\varepsilon\}}h.
		\end{equation}
		
		It is easy to see that $h=h_{\geqslant\varepsilon}+h_{<\varepsilon}$. We can write the inequality
		\begin{equation}\label{texv}
			\begin{aligned}
				&\|f+g_j\|_{L_{p,q}}
				\overset{\scriptscriptstyle{\text{Lem~\ref{tr}}}}{\leqslant}
				 (1+\alpha)\|f_{\geqslant\varepsilon}+{(g_j)}_{<\delta}\chi_{\{f< \varepsilon\}}\|_{L_{p,q}}+C_\alpha\|{g_j}_{\geqslant\delta}+f_{<\varepsilon}+{(g_j)}_{<\delta}\chi_{\{f\geqslant \varepsilon\}}\|_{L_{p,q}}\\
				&\leqslant
				(1+\alpha)\|f_{\geqslant\varepsilon}+{(g_j)}_{<\delta}\chi_{\{f< \varepsilon\}}\|_{L_{p,q}}+
				CC_\alpha\big(\|{(g_j)}_{\geqslant\delta}\|_{L_{p,q}}+\|f_{<\varepsilon}\|_{L_{p,q}}+\|{(g_j)}_{<\delta}\chi_{\{f\geqslant \varepsilon\}}\|_{L_{p,q}}\big).
			\end{aligned}
		\end{equation}
		
		Let $\delta<\varepsilon$, then 
		
		\begin{equation}
			(f_{\geqslant\varepsilon}+{(g_j)}_{<\delta}\chi_{\{f< \varepsilon\}})^*(t)=
			\begin{cases}
				f^*(t), &   t\leqslant m_f(\varepsilon);\\
				({(g_j)}_{<\delta}\chi_{\{f< \varepsilon\}})^*(t-m_f(\varepsilon)), & t>m_f(\varepsilon).
			\end{cases}
		\end{equation}
		
		So we can write
		\begin{equation}
			\begin{aligned}
				&\|f_{\geqslant\varepsilon}+{(g_j)}_{<\delta}\chi_{\{f< \varepsilon\}}\|^q_{L_{p,q}}=
				\frac{q}{p}\int_{0}^{m_f(\varepsilon)}f^*(t)^qt^{\frac{q}{p}}\frac{dt}{t}+
				\frac{q}{p}\int_{m_f(\varepsilon)}^{\infty}({(g_j)}_{<\delta}\chi_{\{f< \varepsilon\}})^*(t-m_f(\varepsilon))^qt^{\frac{q}{p}}\frac{dt}{t}\\
				&\leqslant
				\|f\|^{q}_{L_{p,q}}+
				\frac{q}{p}\int_{m_f(\varepsilon)}^{Nm_f(\varepsilon)}\delta^qt^{\frac{q}{p}}\frac{dt}{t}+
				\frac{q}{p}\int_{Nm_f(\varepsilon)}^{\infty}g_j^*(t-m_f(\varepsilon))^qt^{\frac{q}{p}}\frac{dt}{t}\\
				&\leqslant
				\|f\|^q_{L_{p,q}}+\delta^{q}(Nm_f(\varepsilon))^{\frac{q}{p}}+
				\frac{q}{p}\int_{(N-1)m_f(\varepsilon)}^{\infty}g_j^*(t)^q\max\{1,(1+\frac{1}{N-1})^{\frac{q}{p}-1}\}t^{\frac{q}{p}}\frac{dt}{t}\\ 
				&\leqslant
				\|f\|^q_{L_{p,q}}+\max\{1,(1+\frac{1}{N-1})^{\frac{q}{p}-1}\}\|g_j\|^q_{L_{p,q}}+\delta^q(Nm_f(\varepsilon))^{\frac{q}{p}}.
			\end{aligned}
		\end{equation}
		
		Therefore, we have
		\begin{equation}\label{dolim}
			\begin{aligned}
				\|f+g_j\|_{L_{p,q}}
				&\leqslant (1+\alpha)\left(\|f\|^q_{L_{p,q}}+\max\{1,(1+\frac{1}{N-1})^{\frac{q}{p}-1}\}\|g_j\|^q_{L_{p,q}}+\delta^q(Nm_f(\varepsilon))^{\frac{q}{p}}\right)^{\frac{q}{p}}\\
				&+
				CC_\alpha(\|{(g_j)}_{\geqslant\delta}\|_{L_{p,q}}+\|f_{<\varepsilon}\|_{L_{p,q}}+\|{(g_j)}_{<\delta}\chi_{\{f\geqslant \varepsilon\}}\|_{L_{p,q}}).
			\end{aligned}
		\end{equation}

		We have $\lim\|g_j\|_{L_{p_1}}=0$, consequently $\lim\|{(g_j)}_{\geqslant\delta}\|_{L_{p,q}}=0$. We pass to the upper limit in~\eqref{dolim}:
		\begin{equation}
			\begin{aligned}
			\limsup \|f+g_j\|_{L_{p,q}}
			&\leqslant
			(1+\alpha)\left(\|f\|^q_{L_{p,q}}+\max\{1,(1+\frac{1}{N-1})^{\frac{q}{p}-1}\}A^q+\delta^q(Nm_f(\varepsilon))^{\frac{q}{p}}\right)^{\frac{1}{q}}\\
			&+
			CC_\alpha(\|f_{<\varepsilon}\|_{L_{p,q}}+\delta\|\chi_{\{f\geqslant \varepsilon\}}\|_{L_{p,q}}).
			\end{aligned}
		\end{equation}
		
		Considering arbitrarily small $\delta$, we get
		\begin{equation}
			\limsup \|f+g_j\|_{L_{p,q}}\leqslant
			(1+\alpha)\left(\|f\|^q_{L_{p,q}}+\max\{1,(1+\frac{1}{N-1})^{\frac{q}{p}-1}\}A^q\right)^{\frac{1}{q}}+
			CC_\alpha\|f_{<\varepsilon}\|_{L_{p,q}}.
		\end{equation}
		
		Let $\varepsilon\rightarrow 0$, then 
		\begin{equation}
			\limsup \|f+g_j\|_{L_{p,q}}\leqslant
			(1+\alpha)\left(\|f\|^q_{L_{p,q}}+\max\{1,(1+\frac{1}{N-1})^{\frac{q}{p}-1}\}A^q\right)^{\frac{1}{q}}.
		\end{equation}
		
		Finally, let $\alpha\rightarrow 0$ and $N\rightarrow\infty$.  Then, we have
		\begin{equation}
			\limsup \|f+g_j\|_{L_{p,q}}\leqslant
			\left(\|f\|^q_{L_{p,q}}+A^q\right)^{\frac{1}{q}}.
		\end{equation}

	\end{proof}

	\subsection*{Auxiliary lemma}
	
	\begin{Le}\label{Lornor}
		Let $0<\alpha<\infty$ and $0<q<\infty$. Let $a=(a_1,a_2,\dots)$ be a sequence of real numbers. Then
		\begin{equation}
			\Big(\sum\limits_{k}\Big(\sum\limits_{|a_j|\in\Delta_k}|a_j|^\alpha\Big)^q\Big)^{\frac{1}{q\alpha}}
			\asymp
			\|a\|_{l_{\alpha,\alpha q}},
		\end{equation}
		\begin{equation}
		\sup\limits_{k}	\Big(\sum\limits_{|a_j|\in\Delta_k}|a_j|^\alpha\Big)^{\frac{1}{\alpha}}
			\asymp
			\|a\|_{l_{\alpha,\infty}}.
		\end{equation}
	\end{Le}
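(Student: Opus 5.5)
We need to prove Lemma~\ref{Lornor}: the dyadic-block quantity is equivalent to the $\ell_{\alpha,\alpha q}$ quasi-norm, and the $\sup$ version is equivalent to the $\ell_{\alpha,\infty}$ quasi-norm. The plan is to express both sides through the counting function $N_k=\#\{j\colon |a_j|\in\Delta_k\}$, where $\Delta_k=[2^{-k-1},2^{-k})$, which is also the framework of the proof of Item~8 of Proposition~\ref{HLp}. We may assume every $a_j\neq0$, since zero entries affect neither side.

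\textbf{Left-hand side.} On each block $|a_j|\asymp 2^{-k}$, so
\begin{equation*}
\sum_{|a_j|\in\Delta_k}|a_j|^\alpha\asymp N_k2^{-k\alpha}.
\end{equation*}
Consequently the left side of the first relation, raised to the power $q\alpha$, is comparable to $\sum_k N_k^q2^{-kq\alpha}$. The $\sup$ version is comparable to $\sup_k N_k2^{-k\alpha}$. The constants depend only on $\alpha$ and $q$.

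\textbf{Right-hand side.} We use the distribution-function form of the quasi-norm:
\begin{equation*}
\|a\|^{\alpha q}_{\ell_{\alpha,\alpha q}}=\alpha q\int_0^\infty\big(m_a(t)t^\alpha\big)^{q}\frac{dt}{t}.
\end{equation*}
Split the integral into the dyadic intervals $t\in[2^{-k-1},2^{-k})$. On such an interval $t^\alpha\asymp 2^{-k\alpha}$, and $m_a(t)$ is squeezed between $m_a(2^{-k})$ and $m_a(2^{-k-1})$. Write $S_k=m_a(2^{-k-1})=\sum_{i\leqslant k}N_i$. This gives
\begin{equation*}
\|a\|^{\alpha q}_{\ell_{\alpha,\alpha q}}\asymp\sum_k S_k^q2^{-kq\alpha},
\end{equation*}
with constants depending only on $\alpha,q$; an index shift by one costs at most the factor $2^{\alpha q}$. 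Similarly $\|a\|_{\ell_{\alpha,\infty}}^\alpha\asymp\sup_kS_k2^{-k\alpha}$.

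\textbf{Comparing $N_k$ with the partial sums $S_k$.} Since $N_k\leqslant S_k$, one direction of both equivalences is immediate. The reverse direction is the only real obstacle, and we handle it with a discrete Hardy inequality for geometrically weighted partial sums:
\begin{equation*}
\sum_k\Big(\sum_{i\leqslant k}N_i\Big)^q2^{-kq\alpha}\lesssim\sum_k N_k^q2^{-kq\alpha}.
\end{equation*}
Set $x_i=N_i2^{-i\alpha}$. Then $S_k2^{-k\alpha}=\sum_{i\leqslant k}x_i2^{-(k-i)\alpha}$, which is the convolution of $(x_i)$ with the kernel $2^{-m\alpha}$, $m\geqslant0$.
\begin{itemize}
\item If $q\geqslant1$, Young's inequality in $\ell_q$ applies, because the kernel is in $\ell_1$ with norm $(1-2^{-\alpha})^{-1}$.
\item If $0<q<1$, we use $(\sum_i y_i)^q\leqslant\sum_i y_i^q$ and then sum the geometric series $\sum_{m\geqslant0}2^{-m\alpha q}$.
\item For the $\sup$ case, $\sup_k\sum_{i\leqslant k}x_i2^{-(k-i)\alpha}\leqslant(1-2^{-\alpha})^{-1}\sup_ix_i$.
\end{itemize}

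Combining the three steps gives both equivalences. The only place needing care is the bookkeeping in the dyadic discretization of the integral, where the indices shift by one; these shifts change only the constants.
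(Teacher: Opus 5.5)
Your proof is correct and follows the paper's argument. The paper also sets $b_k=m_a(2^{-k-1})-m_a(2^{-k})$ (your $N_k$), reduces the Lorentz quasi-norm to $\sum_k\bigl(\sum_{j\le k}b_j\bigr)^q2^{-\alpha qk}$, and treats the cases $q=\infty$, $q\le1$ (subadditivity of $t\mapsto t^q$) and $1<q<\infty$ separately; the only cosmetic difference is that for $1<q<\infty$ it uses H\"older's inequality with weights $2^{\pm\alpha(k-j)/2}$ where you quote Young's convolution inequality.
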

	\begin{proof}
		Let $b_k=m_a(2^{-k-1})-m_a(2^{-k})$, then we have
		\begin{equation}
			\sum\limits_{k}\Big(\sum\limits_{|a_j|\in\Delta_k}|a_j|^\alpha\Big)^q
			\asymp
			\sum\limits_{k} b_k^q 2^{-\alpha qk},
		\end{equation}
		\begin{equation}
			\sup\limits_{k}\sum\limits_{|a_j|\in\Delta_k}|a_j|^\alpha
			\asymp
			\sup\limits_{k}( b_k 2^{-\alpha k}).
		\end{equation}
		We can write the estimates
		\begin{equation}
			\|a\|_{l_{\alpha,\alpha q}}^{\alpha q}
			=
			\alpha q\int\limits_{0}^{\infty}m_a(t)^q t^{\alpha q}\frac{dt}{t}
			\asymp
			\sum\limits_{k} ^q m_a(2^{-k-1})2^{-\alpha qk}
			=
			\sum\limits_{k} \Big(\sum\limits_{j=-\infty}^{k}b_j\Big)^q 2^{-\alpha qk},
		\end{equation}
		\begin{equation}
			\|a\|_{l_{\alpha,\infty}}^{\alpha}
			=
			\sup m_a(t) t^{\alpha}
			\asymp
			\sup  m_a(2^{-k-1})2^{-\alpha k}
			=
			\sup \Big(\sum\limits_{j=-\infty}^{k}b_j\Big) 2^{-\alpha k}.
		\end{equation}
		
		Now we consider several cases
		
		Case $q=\infty$:
		\begin{equation}
			\begin{aligned}
				\sup\limits_{k}( b_k 2^{-\alpha k})
				&\lesssim
				\sup \Big(\sum\limits_{j=-\infty}^{k}b_j\Big) 2^{-\alpha k}\\
				&\lesssim
				\sup \Big(\sum\limits_{j=-\infty}^{k}\sup\limits_{l}( b_l 2^{-\alpha l})2^{j\alpha}\Big) 2^{-\alpha k}
				=
				\sup\limits_{k}( b_k 2^{-\alpha k})\sum\limits_{j=0}^{\infty}2^{-\alpha j}
				\asymp
				\sup\limits_{k}( b_k 2^{-\alpha k}).
			\end{aligned}
		\end{equation}
		
		Case $q\leqslant 1$:
		\begin{equation}
			\begin{aligned}
				\sum\limits_{k} b_k^q 2^{-\alpha qk}
				&\lesssim
				\sum\limits_{k} \Big(\sum\limits_{j=-\infty}^{k}b_j\Big)^q 2^{-\alpha qk}\\
				&\lesssim
				\sum\limits_{k} \sum\limits_{j=-\infty}^{k}b_j^q 2^{-\alpha qk}
				=
				\sum\limits_{k}b_k^q 2^{-\alpha qk} \sum\limits_{j=0}^{\infty}2^{-\alpha qj}
				\asymp
				\sum\limits_{k} b_k^q 2^{-\alpha qk}.
			\end{aligned}
		\end{equation}
		
		Case $1<q<\infty$:
		\begin{equation}
			\begin{aligned}
				\sum\limits_{k} b_k^q 2^{-\alpha qk}
				&\lesssim
				\sum\limits_{k} \Big(\sum\limits_{j=-\infty}^{k}b_j\Big)^q 2^{-\alpha qk}\\
				&\lesssim
				\sum\limits_{k} \Big(\sum\limits_{j=-\infty}^{k}b_j^q 2^{\frac{\alpha q(k-j)}{2}}\Big)\Big(\sum\limits_{j=-\infty}^{k} 2^{-\frac{\alpha q'(k-j)}{2}}\Big)^{\frac{q}{q'}} 2^{-\alpha qk}\\
				&=
				\sum\limits_{k}b_k^q 2^{-\alpha qk} \sum\limits_{j=0}^{\infty}2^{\frac{-\alpha qj}{2}}\Big(\sum\limits_{j=0}^{\infty} 2^{-\frac{\alpha q'j}{2}}\Big)^{\frac{q}{q'}}
				\asymp
				\sum\limits_{k} b_k^q 2^{-\alpha qk}.
			\end{aligned}
		\end{equation}
	\end{proof}
	
	\section{Some generalizations and an open question}\label{genops}

	\begin{Def}
		Let $\Phi$ be a regular function and $\alpha>0$ be a real number. Define the set function $\mathcal{H}_{\alpha,\Phi}$ by the formula
		\begin{equation}			
			\mathcal{H}_{\alpha,\Phi}(F,\delta)=\underset{\diam B_j<\delta}{\underset{F\subset\cup B_j}{\inf}}\sum\limits_{k}\Phi\left(\sum\limits_{\diam B_j\in\Delta_k}(\diam B_j)^\alpha\right).
		\end{equation}
	\end{Def}
	Note that if $\Phi(t)=t^{q}$ then $\mathcal{H}_{\alpha,\Phi}=\mathcal{H}_{\alpha,q}$. One can notice that our construction works in the general case. 
	\begin{Th}
		Let $2<p<\infty$, $1\leqslant q<\infty$. Let $\Phi$ be a regular function such that
		\begin{equation}
			\frac{\Phi(t)}{t^{\frac{q'}{2}}}\overset{t\rightarrow0}{\longrightarrow}0.
		\end{equation}
		Then there exists a compact set $S\in \mathbb{R}^d$ and a probability measure $\mu$ such that $\supp\mu\subset S$, $\hat{\mu}\in L_{p,q}(\mathbb{R}^d)$ and $\mathcal{H}_{\frac{2d}{p},\Phi}(S)=0$.
	\end{Th}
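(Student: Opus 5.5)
We reuse the random Cantor-type construction of Section~\ref{S3}; only the size relation~\eqref{soot} changes, to reflect $\Phi$ in place of $t^\beta$. If a kid of $Q_k$ has side $l(Q_j)=r_k$, the $n$-th layer covering gives
\begin{equation*}
\mathcal{H}_{\frac{2d}{p},\Phi}(C,\delta)\lesssim\sum_{n(Q_k)=n-1}\Phi\big(M_kr_k^{\frac{2d}{p}}\big),
\end{equation*}
as in the proof of Proposition~\ref{H=0}. Here we use $r_{k+1}<r_k/2$, so that each dyadic class $\Delta_l$ contains only the kids of a single parent, and regularity of $\Phi$ to absorb the constants coming from $\diam Q\asymp l(Q)$. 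Meanwhile the $L_{p,q}$ estimate of Lemma~\ref{Ras} reads
\begin{equation*}
\|\hat\mu_{k+1}-\hat\mu_k\|^q_{L_{p,q}}\lesssim b(Q_k)^q\big(M_kr_k^{2d/p}\big)^{-q/2}
\end{equation*}
in the limit $M_k\to\infty$, where $r_k$ is determined by $x_k:=M_kr_k^{2d/p}$. The plan is to choose the numbers $x_k$ directly, depending only on $b(Q_k)$ and $n(Q_k)$, and then set $r_k=(x_k/M_k)^{p/(2d)}$.

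We need two things. First, $\sum_{n(Q_k)=n-1}\Phi(x_k)\to0$ as $n\to\infty$, which gives zero capacity. Second, $\sum_k b(Q_k)^qx_k^{-q/2}<\infty$, which together with Lemma~\ref{p+p} and the telescoping of the proof of Theorem~\ref{resL} gives $\hat\mu\in L_{p,q}$. Note that $b^qx^{-q/2}=b\cdot(b^{2/q'}/x)^{q/2}$ because $q-1=q/q'$. So take $x_k=b(Q_k)^{2/q'}\,\omega(b(Q_k))$ with a function $\omega\ge1$ to be chosen. Then the $L_{p,q}$ series is
\begin{equation*}
\sum_n\sum_{n(Q_k)=n}b(Q_k)\,\omega(b(Q_k))^{-q/2},
\end{equation*}
and since $b\le 2^{-n}$ by~\eqref{2^n} and~\eqref{probsum} holds, it suffices to have $\sum_n\sup_{b\le2^{-n}}\omega(b)^{-q/2}<\infty$. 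The capacity sum is
\begin{equation*}
\sum_{n(Q_k)=n-1}b(Q_k)\cdot\frac{\Phi\big(b^{2/q'}\omega(b)\big)}{b},
\end{equation*}
so it suffices that $\eta(b):=\Phi(b^{2/q'}\omega(b))/b\to0$ as $b\to0$.

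The main step is choosing $\omega$. Write $\Phi(t)=t^{q'/2}\epsilon(t)$ with $\epsilon(t)\to0$. Then $\eta(b)=\omega^{q'/2}\,\epsilon(b^{2/q'}\omega)$, and we need $\omega$ growing slowly enough that $\omega(b)^{q'/2}\epsilon(b^{2/q'}\omega(b))\to0$, yet fast enough that $\omega(2^{-n})^{-q/2}$ is summable in $n$. The second requirement is met for instance by $\omega(2^{-n})\ge n^{3/q}$, with $\omega$ made monotone in $b$. The first is a diagonal choice. Set $\tilde\epsilon(s)=\sup_{t\le s}\epsilon(t)$, which decreases to $0$. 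Choosing $\omega(b)=\min\big(\tilde\epsilon(b^{1/q'})^{-1/q'},\,b^{-1/q'}\big)$ keeps $b^{2/q'}\omega\le b^{1/q'}$, so that $\eta\le\tilde\epsilon(b^{1/q'})^{1/2}\to0$ or $\eta\le b^{1/2}\to 0$ in the respective cases.

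However, this $\omega$ may grow too slowly for summability, because $\epsilon$ can decay arbitrarily slowly. This is the obstacle. My fix is to exploit the freedom in the denominator $n(Q_k)+1$ already used in~\eqref{soot}: the summability in $n$ need only hold along a subsequence of layers, and the capacity needs only some coverings. Concretely, we let the growth of $M$ depend on the layer, or take $\omega$ as a function of the layer $n$ rather than of $b$. With $x_k=b^{2/q'}\omega_{n}$ and $\omega_n$ chosen so that $\sum_n\omega_n^{-q/2}<\infty$, we have $\Phi(x_k)/b\le\omega_n^{q'/2}\tilde\epsilon(2^{-2n/q'}\omega_n)$. Making $M$ grow fast forces $b$ to be tiny within layer $n$, so $\tilde\epsilon$ is evaluated at very small arguments; the $M_j$ can be taken huge enough that $\tilde\epsilon(\max_{n(Q_k)=n}b(Q_k)^{2/q'}\omega_n)\le\omega_n^{-q'/2}/n$. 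The consistency of this choice with the rapid-growth requirements of Lemma~\ref{Ras} must be checked, but both conditions only ask for $M_k$ large, so I expect them to be compatible.
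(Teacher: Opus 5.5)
Your proof is correct. It uses the same construction and the same ingredients as the paper: the layer coverings from Proposition~\ref{H=0}, the limit in Lemma~\ref{Ras} with $x_k=M_kr_k^{2d/p}$ held fixed as $M_k\to\infty$, Lemma~\ref{p+p}, and the telescoping sum. What differs is how you recalibrate \eqref{soot}.

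The paper sets $x_k=\Phi^{-1}\big(b(Q_k)/(n(Q_k)+1)\big)$. This makes the capacity side automatic: the layer-$n$ covering costs $\lesssim\sum_{n(Q_k)=n-1}b(Q_k)/n=1/n$ with no further condition. The cost moves to the Lorentz side. Writing $\Phi(t)=t^{q'/2}\epsilon(t)$, the term for $Q_k$ equals $b(Q_k)(n(Q_k)+1)^{q-1}\epsilon(x_k)^{q-1}$. When $\epsilon$ decays slowly, the bound \eqref{2^n} alone does not make this summable. One therefore again needs the $M_j$ large enough that the weights in each layer are far below $2^{-n}$, a requirement the paper's ``the proof is the same'' leaves implicit.

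You make the other side automatic instead. The choice $x_k=b(Q_k)^{2/q'}\omega_{n(Q_k)}$ with $\sum_n\omega_n^{-q/2}<\infty$ makes the $L_{p,q}$ series exactly $\sum_n\omega_n^{-q/2}$, independent of $\mathcal{M}$. You then spend the rapid growth of $\mathcal{M}$ on forcing $\omega_n^{q'/2}\tilde\epsilon(b^{2/q'}\omega_n)\le 1/n$ in every layer.

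So the two arguments are dual calibrations of one scheme. The paper's is a one-line formula but requires inverting $\Phi$ (a generalized inverse, with $b/(n+1)$ in the range of $\Phi$). Yours avoids $\Phi^{-1}$ and makes explicit exactly where the growth of $\mathcal{M}$ is used.

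Your expectation about compatibility is right. Since $x_j$ depends only on $b(Q_j)$ and $n(Q_j)$, which are fixed before step $j$, every requirement is just a lower bound on $M_j$. This covers $r_j<r_{j-1}/2$, $r_j/l(Q_j)<\frac12$, the two limits in Lemma~\ref{Ras}, and the smallness of $b(Q_j)/M_j$ for the kids of $Q_j$.

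Like the paper, you tacitly need $q>1$. For $q=1$ one has $q'=\infty$, and the hypothesis on $\Phi$ has no meaning.
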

	The proof is the same as the proof of Theorem~\ref{resL}. The difference is that we replace equation~\eqref{soot} with
	\begin{equation}
		r_k=\left(\frac{\Phi^{-1}\Big(\frac{b(Q_k)}{n(Q_k)+1}\Big)}{M_k}\right)^{\frac{p}{2d}}.
	\end{equation}
	
	\begin{Que}
		Let $p>2$ and $q>0$. For what functions $f$ does there exist a compact set $S\subset\mathbb{R}^d$ and a measure $\mu$ such that $\supp \mu\subset S$, $\hat{\mu}\in L_{p,q}$ and $\Lambda_f(S)=0$?
	\end{Que}
	Theorem~\ref{merm} says that for $q\leqslant 2$ and $f(t)=t^{\frac{2d}{p}}$ there does not exist such a compact set and a measure. If $g(t)/t^{\frac{2d}{p}}\rightarrow 0$ and the sequence $\{M_j\}$ grows sufficiently fast then the compact set $S$, which we constructed in Section~\ref{S3}, satisfies $\Lambda_g(S)=0$. For $q>2$ the question is open. Theorem~\ref{resL} says that if $f(t)=t^{\frac{2d}{p}}$ then such a compact set and a measure exist. From this we can conclude that for some function $g$ such that $g(t)/t^{\frac{2d}{p}}\rightarrow\infty$ we have $\Lambda_g(S)=0$. But how fast should this function tend to zero?

	\
	
	\
	
	\
	
	{\small
		
		St. Petersburg State University Department of Mathematics and Computer Sciences;\\
		e-mail: n.dobronravov@spbu.ru\\
		\bigskip
		
	}
	
\end{document}